\numberwithin{equation}{section} 
\newif\ifbiber
\DeclareCiteCommand{\cite}{%
	\ifbibmacroundef{cite:init}{}{\usebibmacro{cite:init}}\usebibmacro{prenote}%
}{%
	\usebibmacro{citeindex}%
	\printtext[bibhyperref]{\usebibmacro{cite}}%
}{%
	\ifbibmacroundef{cite:init}{\multicitedelim}{}%
}{%
	\usebibmacro{postnote}%
}%
\DeclareCiteCommand{\parencite}[\mkbibbrackets]{%
	\ifbibmacroundef{cite:init}{}{\usebibmacro{cite:init}}\usebibmacro{prenote}%
}{%
	\usebibmacro{citeindex}%
	\printtext[bibhyperref]{\usebibmacro{cite}}%
}{%
	\ifbibmacroundef{cite:init}{\multicitedelim}{}%
}{%
	\usebibmacro{postnote}%
}%
\let\cite\parencite
\providecommand\given{\nonscript\;\delimsize|\nonscript\;\mathopen{}}
\DeclarePairedDelimiterX\set[1]\{\}{#1}
\DeclarePairedDelimiterX\seq[1](){#1}
\DeclarePairedDelimiterX\dual[2]{\langle}{\rangle}{#1,#2}
\DeclarePairedDelimiterX\innerprod[2](){#1,#2}
\DeclarePairedDelimiter\abs{\lvert}{\rvert}
\DeclarePairedDelimiter\norm{\lVert}{\rVert}
\DeclarePairedDelimiter\parens()
\DeclarePairedDelimiter\bracks[]
\newcommand\N{\mathbb{N}}
\newcommand\R{\mathbb{R}}
\renewcommand\d{\mathop{}\!\mathrm{d}}
\newcommand{\embeds}{\hookrightarrow}
\newcommand{\mto}{\rightrightarrows}
\newcommand{\dualspace}{^\star}
\newcommand\HH{\mathcal{H}}
\newcommand\LL{\mathcal{L}}
\newcommand\Cjet{C_{\mathrm{jet}}}
\newcommand\Cb{C_{\mathrm{b}}}
\DeclareMathAlphabet{\mathpzc}{OT1}{pzc}{m}{it}
\newcommand\sign{\operatorname{sign}}
\newcommand\Sign{\operatorname{Sign}}
\newcommand\supp{\operatorname{supp}}
\newcommand\dist{\operatorname{dist}}
\newcommand\diam{\operatorname{diam}}
\newcommand\id{\operatorname{id}}
\let\epsilon\varepsilon
\newcommand\orcid[1]{%
	\hspace{.25em}%
	\href{http://orcid.org/#1}{%
		\protect\includegraphics[height=1em]{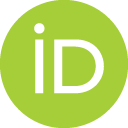}%
	}%
	\hspace{.25em}%
}
\newtheorem{theorem}{Theorem}[section]
\newtheorem{lemma}[theorem]{Lemma}
\newtheorem{proposition}[theorem]{Proposition}
\newtheorem{assumption}[theorem]{Assumption}
\newtheorem{corollary}[theorem]{Corollary}
\newtheorem{remark}[theorem]{Remark}
\newtheorem{definition}[theorem]{Definition}
\newtheorem{example}[theorem]{Example}
\crefname{assumption}{Assumption}{Assumptions}
\def\cleverreffix#1{\AddToHook{env/#1/begin}{\crefalias{theorem}{#1}}}
\begin{document}
\title{Continuous differentiability of the signum function and Newton's method for bang-bang control}

\author{%
	Daniel Wachsmuth%
	\footnote{%
		Universität Würzburg,
		Institut für Mathematik,
		97074 Würzburg, Germany
	}
	\orcid{0000-0001-7828-5614}
	\and
	Gerd Wachsmuth%
	\footnote{%
		Brandenburgische Technische Universität Cottbus-Senftenberg,
		Institute of Mathematics,
		03046 Cott\-bus, Germany
	}
	\orcid{0000-0002-3098-1503}%
}
\publishers{}

\maketitle

\begin{abstract}
We investigate bang-bang control problems and the possibility to apply
Newton's method to solve such kind of problems numerically.
To this end, we show that the signum function is Fréchet differentiable between appropriate function spaces.
Numerical experiments show the applicability of the resulting method.
\end{abstract}

\begin{keywords}
Bang-bang controls, Newton method, signum function
\end{keywords}

\begin{msc}
	\mscLink{49M15}, 
	\mscLink{49K30}, 
	\mscLink{49M41} 
\end{msc}

\section{Introduction}
\label{sec:introduction}
We are interested in the numerical solution of
optimal control problems
of the type
\[
 \min_{u \in L^2(\Omega)} \frac12 \|S(u) - y_d\|_{L^2(\Omega)}^2
\]
subject to
\[
|u|\le 1 \text{ almost everywhere on }\Omega.
\]
Here, $\Omega$ is a bounded domain in $\R^d$ and $S:L^2(\Omega) \to L^2(\Omega)$ is the (possibly nonlinear) control-to-state mapping.
In addition, $y_d\in L^2(\Omega)$ is a given desired state.
For such problems,
optimal controls are often bang-bang,
i.e., at almost all points of the domain, optimal controls
only attain the control bounds $\pm 1$.
Typically, the first-order necessary and sufficient optimality condition
can be written in this case as
\begin{equation*}
	\bar u = \sign(-\bar p),
	\qquad
	\bar p = S'(\bar u)^*(S(\bar u)-y_d),
\end{equation*}
where $\bar u$ is an optimal control and $\bar p$ is the associated adjoint state.
In order to solve this system with Newton's method,
we need to show that the signum function $\sign$
is differentiable between appropriate function spaces.
Our main result \cref{thm:differentiability_signum}
shows that
$\sign \colon C^1(\bar\Omega) \to W^{-1,q}(\Omega)\dualspace$
is indeed differentiable at points $w \in C^1(\bar\Omega)$
satisfying a certain regularity condition.
We also show that this gives rise to local superlinear convergence of Newton's method
applied to an optimal control problem governed by linear or semilinear elliptic PDEs,
see \cref{sec:optimal_control}.
Numerical examples show the efficiency of our proposed methods,
see \cref{sec:numerics}.

Second-order optimality conditions for the control of semilinear equations
without a control regularization term
have been given in
\cite{Casas2012:1,ChristofWachsmuth2017:1,WachsmuthWachsmuth2022}.
A-priori error estimates for a variational discretization where given in \cite{DeckelnickHinze2012},
while \cite{Fuica2024,FuicaJork2025}
derived a-posteriori error estimates.
Finally,
we mention that second-order derivatives of the functional $\norm{\cdot}_{L^1(\Omega)}$ on $H_0^1(\Omega)$
have been analyzed in
\cite{Christof2018,ChristofMeyer2019}, which is related to our work,
since the signum function can be seen as the first derivative of $\norm{\cdot}_{L^1(\Omega)}$.

Let us report on existing approaches to solve bang-bang control problems.
Due to their simplicity, projected gradient \cite[Section 2.2.2]{HinzePinnauUlbrichUlbrich2009}  and conditional gradient methods \cite{KunischWalter2024} can be applied.
Frequently, fixed point methods are used to compute bang-bang controls \cite{DeckelnickHinze2012,Fuica2024,FuicaJork2025,VonDanielsHinze2020}.
With the help of the results of this paper, the convergence of fixed-point methods can be explained, see \cref{rem:fixed_point} below.
For controls
that only depend on time,
switching point optimization methods can be used \cite{GlashoffSachs1977,EpplerTroltzsch1986}.
Control problems for ordinary differential equations are amenable to the Goh transform and the transformed system can be solved by Newton's method \cite{Felgenhauer2016}.
Another popular approach is to apply Tikhonov regularization \cite{VonDaniels2018,WachsmuthWachsmuth2011}.

\section{Preliminaries}
\label{sec:prelim}
We use standard notation.
We denote by $\lambda$ the Lebesgue measure on $\R^d$
and by $\HH^{d-1}$ the $(d-1)$-dimensional Hausdorff measure.

For an open set $\Omega \subset \R^d$,
the vector space of all continuous (continuously differentiable) functions
from $\Omega$ to $\R$ is denoted by $C(\Omega)$ ($C^1(\Omega)$).
Lebesgue and Sobolev spaces are denoted by $L^p(\Omega)$ and $W^{k,p}(\Omega)$.
An index ``$c$'' at a function space indicates subspaces of functions with compact support.
We define $\sign \colon \R \to \R$ and $\Sign \colon \R \mto \R$ via
\begin{equation*}
	\sign(x) :=
	\begin{cases}
		-1 & \text{if } x < 0, \\
		0 & \text{if } x = 0, \\
		1 & \text{if } x > 0,
	\end{cases}
	\qquad
	\Sign(x) :=
	\begin{cases}
		\set{-1} & \text{if } x < 0, \\
		[-1,1] & \text{if } x = 0, \\
		\set{1} & \text{if } x > 0.
	\end{cases}
\end{equation*}
We mention that
open and closed balls
with radius $r > 0$
and center $x$
(in a metric space)
are denoted by
$U_r(x)$ and $B_r(x)$, respectively.

\subsection{Weak differentiability of the signum function}
\label{subsec:diff_signum}
We want to show that
$\sign$ is
directionally differentiable (in a weak sense)
at certain points in
$C^1(\Omega)$
w.r.t.\ directions having compact support.
First, we recall the differentiability result from \cite[Theorem~3.4]{WachsmuthWachsmuth2022}.
\begin{theorem}
	\label{thm:taylor_expansion_integral_rn}
	Let $U \subset \R^d$ be a bounded open set.
	Let $w \in C^1(U)$ be given such that
	$\nabla w \ne 0$ on $\set{w = 0}$.
	Further, let $z \in C_c(U)$ be given.
	For all $t > 0$, we define $U_t := \set{ \sign(w) \ne \sign(w + t z)}$.
	Then, for all $\psi \in C(U)$ we have
	\begin{subequations}
		\label{eq:diff_int_rn}
		\begin{align}
			\label{eq:diff_int_rn_a}
			\frac1t \int_{U_t} \psi \d\lambda &\to \int_{\set{w=0}}\frac{\psi \abs{z}}{\abs{\nabla w}} \d\HH^{d-1}
			,
			\\
			\label{eq:diff_int_rn_b}
			\frac1t \int_{U_t} \psi \sign(z) \d\lambda &\to \int_{\set{w=0}}\frac{\psi z}{\abs{\nabla w}} \d\HH^{d-1}
		\end{align}
	\end{subequations}
	as $t \searrow 0$.
\end{theorem}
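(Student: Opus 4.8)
The plan is to reduce the statement to a local computation near the zero set $\set{w=0}$ by means of the coarea formula, after first identifying the symmetric difference set $U_t$ precisely. I would start by observing that, for a point $x$, one has $\sign(w(x)) \ne \sign(w(x)+tz(x))$ exactly when $w(x)$ and $w(x)+tz(x)$ lie on opposite sides of (or at) zero; since $t>0$, this forces $z(x) \ne 0$ and, moreover, $w(x)$ to have sign opposite to $z(x)$ (or $w(x)=0$), together with $\abs{w(x)} \le t\abs{z(x)}$ (up to the measure-zero ambiguity at the endpoints). Thus, up to a $\lambda$-null set, $U_t = \set{ z \ne 0,\ -t\abs{z} < w\,\sign(z) \le 0 } $, i.e.\ $U_t$ is a thin two-sided collar of $\set{w=0}$ whose width in the direction $\sign(z)$ is controlled by $t\abs{z}$. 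Since $z$ has compact support in the open set $U$ and $\nabla w \ne 0$ on the compact set $\set{w=0}\cap\supp z$, all the action happens in a fixed compact subset of $U$, so there are no boundary issues.

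The key step is to express $\int_{U_t}\psi\,\d\lambda$ via the coarea formula applied to $w$ (on a neighborhood of $\set{w=0}\cap\supp z$, where $\abs{\nabla w}$ is bounded away from zero, $w$ being $C^1$):
\begin{equation*}
	\int_{U_t} \psi \,\d\lambda
	= \int_{-\infty}^{\infty} \int_{\set{w=s}} \frac{\psi\, \mathbf{1}_{U_t}}{\abs{\nabla w}} \,\d\HH^{d-1} \,\d s .
\end{equation*}
For fixed small $s$, a point of $\set{w=s}$ lies in $U_t$ precisely when (roughly) $0 \le -s\,\sign(z) \le t\abs{z}$ there, so the inner integrand is supported where $-t\abs{z(x)}\le s\,\sign(z(x))<0$. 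Dividing by $t$ and substituting $s = t\sigma$, the $s$-integral becomes an integral over $\sigma$ of $\int_{\set{w = t\sigma}}\frac{\psi}{\abs{\nabla w}}\mathbf{1}_{\set{\dots}}\d\HH^{d-1}$, and as $t\searrow 0$ the slices $\set{w=t\sigma}$ converge (in the appropriate sense, e.g.\ via the continuity of $s\mapsto \int_{\set{w=s}} f\,\d\HH^{d-1}$ for continuous $f$, which itself follows from the coarea formula and dominated convergence) to $\set{w=0}$. Carrying out the $\sigma$-integration of the indicator $\mathbf{1}_{\set{0\le -\sigma\sign(z)\le \abs{z}}}$ produces a factor $\abs{z}$ on $\set{w=0}$, giving \eqref{eq:diff_int_rn_a}; repeating with $\psi$ replaced by $\psi\,\sign(z)$ and noting $\sign(z)$ is constant along the relevant part of each slice turns the factor $\abs{z}$ into $\sign(z)\abs{z} = z$, giving \eqref{eq:diff_int_rn_b}.

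To make the slice-convergence rigorous I would fix a small $\delta>0$ with $\abs{\nabla w}\ge c>0$ on $\set{\abs{w}\le\delta}\cap\supp z$, use the implicit function theorem to straighten $w$ locally (writing a neighborhood of $\set{w=0}\cap\supp z$ as a graph over $\set{w=0}$ with $w$ as one coordinate), and then the computation becomes an elementary one-dimensional integral in the transversal variable with parameters depending continuously on the base point; uniform continuity of $\psi$, $z$, and $\nabla w$ on the relevant compact set then yields the limit, and the error terms coming from the endpoint ambiguity $\abs{w}=t\abs{z}$ and from the change of variables are $o(t)$ after division by $t$.

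The main obstacle I anticipate is controlling the region where $z$ is small: near $\set{w=0}\cap\set{z=0}$ the collar $U_t$ is extremely thin and the naive graph parametrization degenerates, so one must check that the contribution of $\set{\abs{z}\le\eta}$ is bounded by $C\eta$ uniformly in $t$ (after dividing by $t$), and then let $\eta\searrow 0$; this is exactly where the hypothesis $z\in C_c(U)$ (hence $z$ uniformly continuous, and $\set{w=0}\cap\supp z$ compact with $\nabla w\ne 0$ there) is used decisively. Everything else is a careful but routine application of the coarea formula and dominated convergence.
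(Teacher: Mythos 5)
The paper does not prove this theorem: it is imported verbatim as Theorem~3.4 of \cite{WachsmuthWachsmuth2022}, so there is no in-paper argument for you to match. Evaluated on its own, your plan is the natural one and is essentially sound. The identification of $U_t$, up to $\lambda$-null sets, as $\set{z\ne 0,\ -t\abs{z}<w\sign(z)\le 0}$ is correct; the compact support of $z$ together with $\nabla w\ne 0$ on $\set{w=0}$ confines $U_t$ for small $t$ to a fixed compact subset of $U$ on which $\abs{\nabla w}\ge c>0$, so the coarea reduction
\[
	\frac1t\int_{U_t}\psi\,\d\lambda
	=
	\int_\R \int_{\set{w=t\sigma}} \frac{\psi\,\chi_{\set{0\le -\sigma\sign(z)<\abs{z}}}}{\abs{\nabla w}}\,\d\HH^{d-1}\,\d\sigma
\]
is legitimate, and the fibre computation $\int_\R \chi_{\set{0\le -\sigma\sign(z(x))<\abs{z(x)}}}\,\d\sigma=\abs{z(x)}$ produces the correct limits for \eqref{eq:diff_int_rn_a} and, after the extra factor $\sign(z)$, for \eqref{eq:diff_int_rn_b}.

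The step that needs real work --- and which you flag but only gesture at --- is the slice limit as $t\searrow 0$. After the substitution $s=t\sigma$, the integrand on the moving slice $\set{w=t\sigma}$ involves $\chi_{\set{0\le-\sigma\sign z<\abs z}}$, which is discontinuous across $\set{z=0}$ (and the slice itself moves with $t$), so a bare ``continuity of $s\mapsto\int_{\set{w=s}}f\,\d\HH^{d-1}$ for continuous $f$'' does not close the argument. Your repair is the right one: cut off $\set{\abs z\le\eta}$, on which the $\sigma$-fibre has length $\le\eta$ and hence the contribution after dividing by $t$ is $O(\eta)$ uniformly in $t$ thanks to $\abs{\nabla w}\ge c$; on $\set{\abs z>\eta}$, $\sign z$ is locally constant, straightening $w$ via the implicit function theorem makes the inner integral a continuous function of the slice parameter for each fixed $\sigma$ outside the at-most-countable set where $\HH^{d-1}\parens{\set{\abs z=\abs\sigma}\cap\set{w=0}}>0$, and dominated convergence in $\sigma$ applies; finally let $\eta\searrow 0$. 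This bookkeeping still has to be carried out for the sketch to become a complete proof, but the approach is correct and I see no conceptual gap.
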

This result can be used
to obtain a weak form of differentiability of the signum function
w.r.t.\ directions $z$ with compact support.
\begin{corollary}
	\label{cor:diff_signum}
	Let $U \subset \R^d$ be a bounded open set.
	Let $w \in C^1(U)$ be given such that
	$\nabla w \ne 0$ on $\set{w = 0}$.
	Further, let $z \in C^1_c(U)$ and $\psi \in C(U)$ be given.
	Then,
	\begin{subequations}
		\label{eq:diff_signum}
		\begin{align}
			\int_U
			\frac{
				\sign(w + t z)
				-
				\sign(w)
			}{t}
			\psi
			\d\lambda
			&\to
			2 \int_{\set{w = 0}} \frac{\psi z}{\abs{\nabla w}} \d\HH^{d-1}
			,
			\\
			\int_U
			\frac{
				\abs{
					\sign(w + t z)
					-
					\sign(w)
				}
			}{t}
			\psi
			\d\lambda
			&\to
			2 \int_{\set{w = 0}} \frac{\psi \abs{z}}{\abs{\nabla w}} \d\HH^{d-1}
		\end{align}
	\end{subequations}
	as $t \searrow 0$.
\end{corollary}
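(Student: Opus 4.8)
The plan is to reduce both limits in \eqref{eq:diff_signum} to the two limits furnished by \cref{thm:taylor_expansion_integral_rn}, by showing that for all sufficiently small $t > 0$ the difference quotient $\frac1t(\sign(w + tz) - \sign(w))$ equals $\frac2t\sign(z)$ on $U_t$ and $0$ off $U_t$, up to a Lebesgue-null set, while the corresponding absolute quotient equals $\frac2t$ on $U_t$ and $0$ off $U_t$.

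I would first localize the sets $U_t$. Since $z(x) = 0$ forces $w(x) + tz(x) = w(x)$ and hence $x \notin U_t$, we have $U_t \subseteq \supp z =: K$, a compact subset of $U$; and since $x \in U_t$ forces $0$ to lie between $w(x)$ and $w(x) + tz(x)$, it forces $\abs{w(x)} \le t\abs{z(x)} \le t\sup_K\abs z$. Hence $U_t \subseteq K \cap \set{\abs w \le t\sup_K\abs z}$, and these sets decrease to $K \cap \set{w = 0}$ as $t \searrow 0$. Since $\abs{\nabla w}$ is continuous and strictly positive on the compact set $K \cap \set{w = 0}$, I would fix an open neighborhood $V$ of $K \cap \set{w = 0}$ with $\bar V$ compact in $U$ and $\abs{\nabla w} \ge c > 0$ on $\bar V$. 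A compactness argument then gives $U_t \subseteq V$ for all small $t$, and $\abs{\nabla(w + tz)} \ge c - t\sup_{\bar V}\abs{\nabla z} \ge c/2 > 0$ on $\bar V$ once $t$ is small.

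Next, for such $t$ both $\set{w = 0}$ (by the hypothesis on $w$) and $\set{w + tz = 0} \cap V$ are, locally, graphs of $C^1$ functions by the implicit function theorem, hence Lebesgue-null. Consequently, for a.e.\ $x \in U_t$ we have $w(x) \ne 0$ and $w(x) + tz(x) \ne 0$, so $\sign(w(x))$ and $\sign(w(x) + tz(x))$ both belong to $\set{-1, 1}$ and, by definition of $U_t$, are opposite. A short case distinction — if $w(x) > 0 > w(x) + tz(x)$ then $z(x) < 0$, and symmetrically — then shows $\sign(w(x) + tz(x)) - \sign(w(x)) = 2\sign(z(x))$ for a.e.\ $x \in U_t$, and, since $z \ne 0$ on $U_t$, also $\abs{\sign(w(x) + tz(x)) - \sign(w(x))} = 2$ there; off $U_t$ both quotients vanish identically. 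Substituting these identities into \eqref{eq:diff_signum} reduces the left-hand sides to $\frac2t\int_{U_t}\psi\sign(z)\d\lambda$ and $\frac2t\int_{U_t}\psi\d\lambda$, which converge to $2\int_{\set{w=0}}\frac{\psi z}{\abs{\nabla w}}\d\HH^{d-1}$ and $2\int_{\set{w=0}}\frac{\psi\abs z}{\abs{\nabla w}}\d\HH^{d-1}$ by \eqref{eq:diff_int_rn_b} and \eqref{eq:diff_int_rn_a}, respectively.

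The main obstacle is the third step: the difference quotient fails to equal $\pm\frac2t$ exactly on $\set{w + tz = 0} \setminus \set{w = 0}$, where it equals $\pm\frac1t$. Showing that this exceptional set is negligible is precisely where the hypothesis $z \in C^1_c(U)$ — rather than merely $z \in C_c(U)$, as in \cref{thm:taylor_expansion_integral_rn} — is needed: it guarantees that $w + tz$ is continuously differentiable with non-vanishing gradient near $\set{w = 0}$ for small $t$, so that $\set{w + tz = 0}$ is a null set there.
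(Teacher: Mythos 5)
Your proposal is correct and follows essentially the same route as the paper: reduce to \cref{thm:taylor_expansion_integral_rn} by showing that $\sign(w+tz)-\sign(w) = 2\chi_{U_t}\sign(z)$ holds $\lambda$-a.e.\ for small $t>0$, which requires knowing that $\set{w=0}$ and $\set{w+tz=0}$ are Lebesgue-null. The only cosmetic difference is in how that nullity is established: you invoke the implicit function theorem (the level sets are locally $C^1$-graphs near $\supp z$), whereas the paper appeals to Stampacchia's lemma, which forces $\nabla w=0$ a.e.\ on $\set{w=0}$ and so, combined with the standing assumption $\nabla w\ne 0$ there, gives $\lambda(\set{w=0})=0$ directly.
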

\begin{proof}
	Combining $\nabla w \ne 0$ on $\set{w = 0}$ with
	Stampacchia's lemma, we get $\nabla w = 0$ a.e.\ on $\set{w = 0}$.
	Thus, $\lambda(\set{ w = 0 }) = 0$.
	Further, the compactness of the support of $z$
	ensures that $\nabla (w + t z) \ne 0$ on $\set{w + t z = 0}$
	whenever $t$ is small enough.
	Consequently, $\lambda(\set{ w + t z = 0 }) = 0$.
	This implies that
	\begin{equation*}
		\sign(w + t z) - \sign(w) = 2 \chi_{U_t} \sign(z)
		\qquad\text{$\lambda$-a.e.\ on $\Omega$},
	\end{equation*}
	where $U_t$ is defined as in \cref{thm:taylor_expansion_integral_rn}.
	Thus, the claim follows from \eqref{eq:diff_int_rn}.
\end{proof}

\subsection{Extension of \texorpdfstring{$C^1$}{C1} functions}
\label{subsec:extension}

In this section,
we recall Whitney's extension theorem.
We start with some definitions,
see also \cite[Section~2]{Fefferman2007}.

\begin{definition}
	\label{def:function_spaces}
	Let $d \in \N$ be given.
	We define
	the function space
	\begin{equation*}
		\Cb^1(\R^d)
		:=
		\set*{
			f \in C^1(\R^d)
			\given
			\text{$f$ and $\nabla f$ are bounded}
		}
	\end{equation*}
	equipped with the usual supremum norm
	\begin{equation*}
		\norm{f}_{\Cb^1(\R^d)}
		:=
		\sup\set*{ \abs{\partial^\alpha f (x)} \given \abs{\alpha} \le 1, x \in \R^d }
		<
		\infty
		\qquad
		\forall f \in \Cb^1(\R^d)
		.
	\end{equation*}
	Let $\Omega \subset \R^d$ be open and bounded.
	We define
	\begin{equation*}
		C^1(\bar\Omega)
		:=
		\set{
			f \in C^1(\Omega)
			\given
			\partial^\alpha f
			\text{ uniformly continuous } \forall \abs{\alpha} \le 1
		}
	\end{equation*}
	with norm
	\begin{equation*}
		\norm{ f }_{C^1(\bar\Omega)}
		:=
		\sup\set*{ \abs{\partial^\alpha f (x)} \given \abs{\alpha} \le 1, x \in \Omega }
		<
		\infty
		\qquad\forall f \in C^1(\bar\Omega)
		.
	\end{equation*}
	For $f \in C^1(\bar\Omega)$
	we denote the uniquely determined continuous extensions of
	$f \colon \Omega \to \R$
	and
	$\nabla f \colon \Omega \to \R^d$
	to $\bar\Omega$
	also by
	$f \colon \bar\Omega \to \R$
	and
	$\nabla f \colon \bar\Omega \to \R^d$,
	respectively.

	Finally,
	for a compact set $K \subset \R^d$,
	we define $\Cjet^1(K)$
	as the set of all families
	$\seq{P^x}_{x \in K}$
	of affine functions $P^x \colon \R^d \to \R$
	satisfying
	\begin{enumerate}
		\item
			\label{def:jet:1}
			For all $\varepsilon > 0$ there exists $\delta > 0$
			such that
			\begin{equation*}
				\abs{ P^x(y) - P^y(y) } \le \varepsilon \abs{x - y}
				\qquad\text{and}\qquad
				\abs{ \partial_i P^x(y) - \partial_i P^y(y) } \le \varepsilon
			\end{equation*}
			holds for all $i \in \set{1,\ldots,d}$ and all  $x,y \in K$ with $\abs{x - y} < \delta$.
		\item
			\label{def:jet:2}
			There exists a constant $M \ge 0$ such that
			\begin{align*}
				\abs{P^x(x)} &\le M,
				&
				\abs{\partial_i P^x(x)} &\le M,
				\\
				\abs{P^x(y) - P^y(y)} &\le M \abs{x - y},
				\qquad\text{and}
				&
				\abs{\partial_i P^x(y) - \partial_i P^y(y)} &\le M
			\end{align*}
			holds for all $i \in \set{1,\ldots,d}$ and all $x,y \in K$.
	\end{enumerate}
	The norm $\norm{\seq{P^x}_{x \in K}}_{\Cjet^1(K)}$ is defined to be the smallest constant $M$
	satisfying these inequalities.
\end{definition}

\begin{definition}
	\label{def:LUQ}
	We say that a set $\Omega \subset \R^d$ is
	\emph{uniformly locally quasiconvex}
	with constants
	$r > 0$ and $L \ge 1$
	if for any
	$x,y \in \Omega$ with $\abs{y - x} \le r$
	there exists a Lipschitz continuous curve
	$\gamma \in C^{0,1}([0,1]; \Omega)$
	with $\gamma(0) = x$, $\gamma(1) = y$
	and Lipschitz constant
	at most $L \abs{y - x}$.
\end{definition}

We recall the following result from \cite[Lemma~5.3]{WachsmuthWalter2024}.
\begin{lemma}
	\label{lem:Lipschitz_implies_ulq}
	Let $\Omega \subset \R^d$ be open and bounded.
	Further, we require that $\Omega$
	is a Lipschitz set
	in the sense that
	for every point $p \in \partial \Omega$,
	there exists $r > 0$ and
	a bijection $l_p \colon B_r(p) \to B_1(0)$
	such that $l_p$ and $l_p^{-1}$ are Lipschitz,
	\begin{equation*}
		l_p( \Omega \cap B_r(p)) = \set{z \in B_1(0) \given z_n > 0},
		\quad\text{and}\quad
		l_p( \partial \Omega \cap B_r(p)) = \set{z \in B_1(0) \given z_n = 0}
		.
	\end{equation*}
	Then,
	$\Omega$ is uniformly locally quasiconvex.
\end{lemma}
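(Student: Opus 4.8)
The plan is to construct the connecting curves by hand from two ingredients: straight line segments, which work whenever both endpoints sit comparatively deep inside $\Omega$, and the images under $l_p^{-1}$ of straight segments in the model half-ball, which work for points close to the boundary. A compactness argument over $\partial\Omega$ then converts the pointwise chart data into the two uniform constants $r$ and $L$ required by \cref{def:LUQ}.

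First I would extract a finite subcover. The boundary $\partial\Omega$ is compact (it is closed and, since $\Omega$ is bounded, bounded), so finitely many of the open balls $U_{r_p/2}(p)$, $p \in \partial\Omega$, already cover it; write them $U_{r_i/2}(p_i)$ with the associated bi-Lipschitz charts $l_i \colon B_{r_i}(p_i) \to B_1(0)$, $i = 1,\dots,N$. I would then put $r := \tfrac14\min_i r_i > 0$ and $L := \max\set{1,\ \max_i \operatorname{Lip}(l_i)\operatorname{Lip}(l_i^{-1})}$ and claim these constants work. (Since $\operatorname{Lip}(f)\operatorname{Lip}(f^{-1}) \ge 1$ for any bi-Lipschitz bijection on a set with more than one point, the $\max$ with $1$ is merely cosmetic.)

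Now fix $x,y \in \Omega$ with $\abs{x-y} \le r$; the case $x = y$ is handled by the constant curve. If $\abs{x-y} < \dist(x,\partial\Omega)$, the segment $t \mapsto (1-t)x + ty$ stays inside $U_{\dist(x,\partial\Omega)}(x) \subset \Omega$ and has Lipschitz constant $\abs{x-y} \le L\abs{x-y}$. Otherwise $0 < \dist(x,\partial\Omega) \le \abs{x-y} \le r$; pick $p \in \partial\Omega$ with $\abs{x-p} = \dist(x,\partial\Omega)$ (possible since $\partial\Omega$ is compact) and then $j$ with $p \in U_{r_j/2}(p_j)$. The triangle inequality together with the factor $\tfrac14$ gives $\abs{x-p_j} < \tfrac34 r_j$ and $\abs{y-p_j} < r_j$, so both $x$ and $y$ lie in $\Omega \cap B_{r_j}(p_j)$, and therefore $a := l_j(x)$ and $b := l_j(y)$ lie in the convex set $\set{z \in B_1(0) \given z_n > 0}$. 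The curve $\gamma(t) := l_j^{-1}\bigl((1-t)a + tb\bigr)$, $t \in [0,1]$, then joins $x$ to $y$ inside $\Omega \cap B_{r_j}(p_j) \subset \Omega$, and its Lipschitz constant is at most $\operatorname{Lip}(l_j^{-1})\abs{b-a} \le \operatorname{Lip}(l_j^{-1})\operatorname{Lip}(l_j)\abs{x-y} \le L\abs{x-y}$. Both cases are thus covered.

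The only genuinely non-routine point is the uniformity of the constants: without the finite subcover the admissible radius could shrink to zero and the Lipschitz constant could blow up as one moves along the boundary, so the compactness step is doing the real work. The remaining ingredients — convexity of the model half-ball, the behaviour of the Lipschitz constant under post-composition with an affine map, and the little computation ensuring that a nearest boundary point of $x$ lands in one of the finitely many half-size balls with enough room to keep both $x$ and $y$ in the corresponding full-size chart domain — are routine, and the slightly conservative choice $r = \tfrac14\min_i r_i$ is precisely what secures that room.
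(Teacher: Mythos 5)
The paper only \emph{cites} this lemma from \cite[Lemma~5.3]{WachsmuthWalter2024} and does not reproduce a proof, so there is no in-paper argument to compare against. Your self-contained proof is correct, and it is the natural argument one would expect for this statement: compactness of $\partial\Omega$ (closed, and bounded since $\Omega$ is) furnishes a finite subcover $\set{U_{r_i/2}(p_i)}_{i=1}^N$ and hence uniform constants $r = \tfrac14\min_i r_i$ and $L = \max_i \operatorname{Lip}(l_i)\operatorname{Lip}(l_i^{-1})$; points that are closer to each other than to $\partial\Omega$ are joined by straight segments (which stay in $U_{\dist(x,\partial\Omega)}(x)\subset\Omega$); and for the remaining case your choice of the factor $\tfrac14$ together with $|p-p_j| < r_j/2$ indeed yields $|x-p_j| < 3r_j/4$ and $|y-p_j| < r_j$, so both chart images lie in the convex model half-ball $\set{z\in B_1(0)\given z_n>0}$, making $\gamma(t) = l_j^{-1}\bigl((1-t)l_j(x)+t\,l_j(y)\bigr)$ a curve in $\Omega$ with Lipschitz constant at most $\operatorname{Lip}(l_j^{-1})\operatorname{Lip}(l_j)|x-y| \le L|x-y|$. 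The case split is exhaustive (the $x=y$ case is trivial, and $\dist(x,\partial\Omega)>0$ for $x\in\Omega$ open), and the note that $\operatorname{Lip}(f)\operatorname{Lip}(f^{-1})\ge 1$ for a bi-Lipschitz bijection on a non-degenerate set correctly justifies $L\ge 1$. I see no gap.
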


As a further ingredient,
we need the celebrated extension theorem by Whitney
\cite[Theorem~I]{Whitney1934:2}
which we state as in \cite[Theorem~2.1]{Fefferman2007}.
\begin{theorem}
	\label{thm:whithney}
	Given a compact set $K \subset \R^d$,
	there exists a bounded linear map $E_K \colon \Cjet^1(K) \to \Cb^1(\R^d)$
	such that
	\begin{equation*}
		P^x(y)
		=
		E_K(\seq{P^x}_{x \in K})(x)
		+
		\nabla [E_K(\seq{P^x}_{x \in K})](x)^\top (y - x)
		\qquad
		\forall x \in K, y \in \R^d
	\end{equation*}
	for all $\seq{P^x}_{x \in K} \in \Cjet^1(K)$.
	Moreover, the norm of $E_K$ only depends on the dimension $d$.
\end{theorem}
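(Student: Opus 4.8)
The statement is the classical Whitney extension theorem for $C^1$ jets, and the plan is to reproduce Whitney's construction while keeping track of the dependence of all constants on the dimension. Throughout, $C_d, c_d, N_d$ denote constants depending only on $d$, possibly changing from line to line. Write $U := \R^d \setminus K$; the case $K = \emptyset$ is trivial and since $K$ is compact we have $U \neq \emptyset$. First I would fix, once and for all, a \emph{Whitney decomposition} of $U$: a countable family $\set{Q}$ of closed dyadic cubes with pairwise disjoint interiors, $\bigcup_Q Q = U$, and $\diam Q \le \dist(Q,K) \le C_d \diam Q$, whose slight dilates $Q^\ast := \tfrac{9}{8} Q$ still satisfy $c_d \diam Q \le \dist(x,K) \le C_d \diam Q$ for all $x \in Q^\ast$ and have bounded overlap (each point of $U$ lies in at most $N_d$ of the $Q^\ast$). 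Subordinate to $\set{Q^\ast}$ choose a smooth partition of unity $\set{\varphi_Q}$ with $0 \le \varphi_Q \le 1$, $\supp \varphi_Q \subset Q^\ast$, $\sum_Q \varphi_Q \equiv 1$ on $U$, and $\abs{\nabla \varphi_Q} \le C_d/\diam Q$; and for each $Q$ pick $p_Q \in K$ with $\abs{p_Q - x} \le C_d \dist(x,K)$ for all $x \in Q^\ast$. All of these objects depend only on $d$ and $K$, not on the jet.

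Given $\seq{P^x}_{x \in K} \in \Cjet^1(K)$, define
\[
	\tilde F(x) :=
	\begin{cases}
		P^x(x) & \text{if } x \in K, \\
		\sum_Q \varphi_Q(x)\, P^{p_Q}(x) & \text{if } x \in U,
	\end{cases}
\]
which is manifestly linear in $\seq{P^x}_{x\in K}$. On the open set $U$ the sum is locally finite, so $\tilde F \in C^\infty(U)$; the real work is to show $\tilde F \in C^1(\R^d)$ with $\tilde F(a) = P^a(a)$ and $\nabla\tilde F(a) = \nabla P^a$ for every $a \in K$, which — as each $P^a$ is affine — is exactly the jet identity $P^a(y) = \tilde F(a) + \nabla\tilde F(a)^\top(y-a)$ claimed in the theorem. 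For differentiability at $a \in K$ one writes, for $x \in U$ near $a$, $\tilde F(x) - P^a(x) = \sum_Q \varphi_Q(x)(P^{p_Q}(x) - P^a(x))$; only cubes with $x \in Q^\ast$ contribute, for those $\diam Q \le C_d\abs{x-a}$ and $\abs{p_Q - a} \le C_d\abs{x-a}$, and expanding the two affine functions around $p_Q$ and invoking condition~\ref{def:jet:1} bounds the sum by $o(\abs{x-a})$; the case $x \in K$ is handled directly by condition~\ref{def:jet:1}, giving $\abs{P^x(x) - P^a(x)} \le \varepsilon\abs{x-a}$. For continuity of $\nabla\tilde F$ at $a$ one compares, for $x \in U$, not with $P^a$ but with the jet $P^{\hat p}$ at a point $\hat p \in K$ nearest to $x$, and uses the cancellation $\sum_Q \nabla\varphi_Q \equiv 0$ on $U$ to write
\[
	\nabla\tilde F(x) - \nabla P^{\hat p}
	=
	\sum_Q \nabla\varphi_Q(x)\bigl(P^{p_Q}(x) - P^{\hat p}(x)\bigr)
	+
	\sum_Q \varphi_Q(x)\bigl(\nabla P^{p_Q} - \nabla P^{\hat p}\bigr) .
\]
In the first sum the divergent factor $\abs{\nabla\varphi_Q} \le C_d/\diam Q$ is absorbed because $\abs{P^{p_Q}(x) - P^{\hat p}(x)} \le C_d\varepsilon\dist(x,K)$ for $x$ close to $a$ while $\dist(x,K) \le C_d\diam Q$ for the contributing cubes; the second sum is controlled directly by condition~\ref{def:jet:1}; and finally $\abs{\nabla P^{\hat p} - \nabla P^a}$ is small since $\abs{\hat p - a} \le 2\abs{x-a}$. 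Hence $\nabla\tilde F(x) \to \nabla P^a$ as $x \to a$, while on $K$ one has $\nabla\tilde F(x) = \nabla P^x \to \nabla P^a$ directly; so $\tilde F \in C^1(\R^d)$.

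Running the very same computations with the uniform bound of condition~\ref{def:jet:2}, with constant $M := \norm{\seq{P^x}_{x\in K}}_{\Cjet^1(K)}$, in place of the $\varepsilon$-modulus yields the quantitative estimates $\abs{\tilde F(x)} \le C_d M\,(1 + \dist(x,K))$ and $\abs{\nabla\tilde F(x)} \le C_d M$ for all $x \in \R^d$. The only remaining defect is the possible linear growth of $\abs{\tilde F}$ away from $K$; since $K$ is compact, this is cured by truncation. Fix a cutoff $\chi \in C_c^\infty(\R^d)$ with $0 \le \chi \le 1$, $\chi \equiv 1$ on a neighborhood of $K$, $\supp\chi \subset \set{\dist(\cdot,K) < 3}$, and $\abs{\nabla\chi} \le 2$ — such $\chi$ exists with a gradient bound independent of $d$ and $K$, e.g.\ by composing a mollification of $\dist(\cdot,K)$ with a fixed scalar bump. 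Set $E_K(\seq{P^x}_{x\in K}) := \chi\,\tilde F$. Then $E_K$ is linear, $E_K(\seq{P^x}_{x\in K})$ has compact support and hence lies in $\Cb^1(\R^d)$, it agrees with $\tilde F$ on a neighborhood of $K$ so the jet identity holds for every $x \in K$, and on $\supp\chi$, where $\dist(\cdot,K) < 3$, the estimates above give $\abs{E_K(\seq{P^x}_{x\in K})} \le C_d M$ and $\abs{\nabla E_K(\seq{P^x}_{x\in K})} \le \abs{\nabla\chi}\,\abs{\tilde F} + \abs{\chi}\,\abs{\nabla\tilde F} \le C_d M$. Thus $E_K$ is bounded with norm depending only on $d$.

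The main obstacle is the $C^1$ part of the second paragraph — proving that the naively pasted function is genuinely continuously differentiable up to $K$. Everything else (the Whitney decomposition, the subordinate partition of unity, the final cutoff, and the bookkeeping that all constants are dimensional) is standard; the one genuinely delicate point is that the factors $\abs{\nabla\varphi_Q} \sim 1/\diam Q$ blow up as the cubes shrink toward $K$, and they can only be recombined into a convergent expression with the correct limit by exploiting the cancellation $\sum_Q \nabla\varphi_Q \equiv 0$ together with the Whitney compatibility condition~\ref{def:jet:1}.
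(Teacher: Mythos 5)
The paper does not supply a proof of \cref{thm:whithney}; it is stated following \cite[Theorem~2.1]{Fefferman2007}, attributed to \cite{Whitney1934:2}, and then used as a black box in the proof of \cref{thm:C1_extension}. Your sketch is a correct account of the classical Whitney construction that those sources carry out: the dyadic Whitney decomposition of $\R^d\setminus K$, the subordinate partition of unity with $\abs{\nabla\varphi_Q}\le C_d/\diam Q$, the extension defined as a $\varphi_Q$-weighted sum of jet polynomials anchored at nearby base points $p_Q\in K$, the use of $\sum_Q\nabla\varphi_Q\equiv 0$ on $\R^d\setminus K$ together with condition~\ref{def:jet:1} of \cref{def:function_spaces} to cancel the $1/\diam Q$ blow-up and prove $C^1$ regularity up to $K$, the quantitative norm bound via condition~\ref{def:jet:2}, and finally the compactly supported cutoff. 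That last truncation step is worth highlighting: Whitney's theorem for a general closed set $K$ only produces an extension in $C^1(\R^d)$ with locally bounded data, whereas the paper's statement targets $\Cb^1(\R^d)$ with a norm depending only on $d$; your cutoff argument is exactly where compactness of $K$ is used to bridge that gap, and you have handled it correctly (including arranging that the cutoff can be chosen with gradient bound independent of $K$). Since the paper cites rather than proves, there is no internal argument to compare against, but your proposal follows the same route as the cited references and is complete modulo the usual routine verifications of the Whitney-decomposition properties.
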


By combining the previous results, we get our desired extension result.
\begin{theorem}
	\label{thm:C1_extension}
	Suppose that $\Omega \subset \R^d$
	is open, bounded, and uniformly locally quasiconvex.
	Then, there exists a bounded linear operator $E \colon C^1(\bar\Omega) \to \Cb^1(\R^d)$
	with
	\begin{equation*}
		(E f)_{|\bar\Omega} = f
	\end{equation*}
	for all $f \in C^1(\bar\Omega)$.
	Note that this also implies that the derivatives of $E f$ and $f$ coincide on $\bar\Omega$.
\end{theorem}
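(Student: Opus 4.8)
The plan is to apply Whitney's extension theorem (\cref{thm:whithney}) to the compact set $K := \bar\Omega$. To each $f \in C^1(\bar\Omega)$ we associate its family of first-order Taylor polynomials,
\[
	P^x(y) := f(x) + \nabla f(x)^\top (y - x),
	\qquad x \in \bar\Omega,\; y \in \R^d,
\]
where $f$ and $\nabla f$ denote the continuous extensions to $\bar\Omega$ provided by \cref{def:function_spaces}. The crucial step is to show that $\seq{P^x}_{x \in \bar\Omega}$ belongs to $\Cjet^1(\bar\Omega)$ and that
\[
	\norm{\seq{P^x}_{x \in \bar\Omega}}_{\Cjet^1(\bar\Omega)} \le C\, \norm{f}_{C^1(\bar\Omega)}
\]
with a constant $C$ depending only on $d$ and on the quasiconvexity constants $r, L$ of $\Omega$. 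Once this is established, the map $\Phi \colon f \mapsto \seq{P^x}_{x \in \bar\Omega}$ is bounded and linear from $C^1(\bar\Omega)$ into $\Cjet^1(\bar\Omega)$, and $E := E_{\bar\Omega} \circ \Phi$ is bounded and linear from $C^1(\bar\Omega)$ into $\Cb^1(\R^d)$. The asserted identities then follow directly from the conclusion of \cref{thm:whithney}: both sides of the identity there are affine functions of $y$, so comparing the values at $y = x$ yields $(E f)(x) = P^x(x) = f(x)$ for every $x \in \bar\Omega$, and comparing the derivatives in $y$ yields $\nabla (E f)(x) = \nabla f(x)$ for every $x \in \bar\Omega$.

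The heart of the proof is thus the verification that $\seq{P^x}_{x \in \bar\Omega}$ is an admissible jet with the stated norm bound. Since each $P^x$ is affine, its partial derivatives $\partial_i P^x \equiv \partial_i f(x)$ are constant, so the pointwise bounds $\abs{P^x(x)} = \abs{f(x)} \le \norm{f}_{C^1(\bar\Omega)}$ and $\abs{\partial_i P^x(x)} = \abs{\partial_i f(x)} \le \norm{f}_{C^1(\bar\Omega)}$ are immediate, and $\abs{\partial_i P^x(y) - \partial_i P^y(y)} = \abs{\partial_i f(x) - \partial_i f(y)}$ is bounded by $2\norm{f}_{C^1(\bar\Omega)}$ in general and by any given $\varepsilon > 0$ once $\abs{x - y}$ is small enough, by uniform continuity of $\nabla f$ on $\bar\Omega$. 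It remains to control the Taylor remainder $P^x(y) - P^y(y) = f(x) - f(y) + \nabla f(x)^\top (y - x)$. For $\abs{x - y} \ge r$ this is handled by the triangle inequality together with $\abs{\nabla f(x)} \le \sqrt d\, \norm{f}_{C^1(\bar\Omega)}$, which yields $\abs{P^x(y) - P^y(y)} \le (2/r + \sqrt d)\, \norm{f}_{C^1(\bar\Omega)}\, \abs{x - y}$. For $\abs{x - y} < r$ I would use uniform local quasiconvexity (\cref{def:LUQ}) to pick a Lipschitz curve $\gamma \colon [0,1] \to \Omega$ with $\gamma(0) = x$, $\gamma(1) = y$ and Lipschitz constant at most $L\abs{x - y}$. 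Then $y - x = \int_0^1 \gamma'(t)\d t$ and the fundamental theorem of calculus along $\gamma$ give
\[
	P^x(y) - P^y(y) = \int_0^1 \bigl( \nabla f(x) - \nabla f(\gamma(t)) \bigr)^\top \gamma'(t) \d t,
\]
and since $\abs{\gamma(t) - x} \le L\abs{x - y}$ and $\abs{\gamma'(t)} \le L\abs{x - y}$ for a.e.\ $t \in [0,1]$,
\[
	\abs{P^x(y) - P^y(y)}
	\le
	L\abs{x - y}\; \sup_{0 \le t \le 1} \abs{\nabla f(x) - \nabla f(\gamma(t))}.
\]
Bounding the supremum by $2\norm{f}_{C^1(\bar\Omega)}$ gives the constant-$M$ inequality, while bounding it through the modulus of continuity of $\nabla f$ — which becomes small once $L\abs{x - y}$ is small — gives the $\varepsilon\abs{x - y}$ bound required for membership in $\Cjet^1(\bar\Omega)$.

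The main obstacle is a technicality: \cref{def:LUQ} only supplies connecting curves for pairs of points in the open set $\Omega$, whereas the jet conditions are required for all $x, y \in \bar\Omega$. I would deal with boundary points by approximation: given $x, y \in \bar\Omega$ with $\abs{x - y} < r$, choose sequences $x_n, y_n \in \Omega$ with $x_n \to x$, $y_n \to y$ and $\abs{x_n - y_n} < r$ for $n$ large, apply the curve estimate above to each pair $(x_n, y_n)$, and pass to the limit using the uniform continuity of $f$ and $\nabla f$ on $\bar\Omega$; the right-hand sides converge to the corresponding expressions for $(x, y)$. Apart from this step the argument is entirely elementary, and the dependence of $\norm{E}$ on $\Omega$ enters only through $r$, $L$, and — via \cref{thm:whithney} — the dimension $d$.
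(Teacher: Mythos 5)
Your proposal is correct and follows essentially the same route as the paper: form the first-order Taylor jet, verify membership in $\Cjet^1(\bar\Omega)$ with a bound $\norm{\seq{P^x}}_{\Cjet^1(\bar\Omega)} \le C\norm{f}_{C^1(\bar\Omega)}$ by splitting into the near case (handled via the quasiconvex connecting curve and uniform continuity of $\nabla f$) and the far case $\abs{x-y}\ge r$ (handled by a crude bound), then compose with Whitney's operator. The only cosmetic differences are that you bound the remainder by a supremum rather than an integral over the curve, your far-case constant $(2/r+\sqrt d)$ differs slightly from the paper's $(2+\diam\Omega)/r$, and you spell out the density argument for boundary points where the paper merely says ``by continuity.''
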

\begin{proof}
	We follow the proof of \cite[Theorem, p. 485]{Whitney1934:1}.

	We
	associate with $f$ the jet of its Taylor polynomials.
	That is, we set
	\begin{equation*}
		P^x(y) := f(x) + \nabla f(x)^\top (y - x).
	\end{equation*}
	We are going to check that
	$\seq{P^x}_{x \in \bar\Omega} \in \Cjet^1(\bar\Omega)$.
	First of all, it is clear that
	\begin{equation*}
		\abs{P^x(x)} = \abs{f(x)} \le \norm{f}_{C^1(\bar\Omega)},
		\qquad
		\abs{\partial_i P^x(x)} = \abs{\partial_i f(x)} \le \norm{f}_{C^1(\bar\Omega)}
		,
	\end{equation*}
	and
	\begin{equation*}
		\abs{\partial_i P^x(y) - \partial_i P^y(y)} \le 2 \norm{f}_{C^1(\bar\Omega)}
		.
	\end{equation*}

	We assume that $\Omega$ uniformly locally quasiconvex
	with constants $r > 0$ and $L \ge 1$.

	Next, let $x,y \in \Omega$
	with $\abs{x - y} < r$ be given.
	We denote the curve connecting $x$ and $y$ by $\gamma$.
	Then,
	\begin{align*}
		\abs{
			P^x(y) - P^y(y)
		}
		&=
		\abs{
			f(x) - f(y) + \nabla f(x)^\top (y - x)
		}
		\\
		&=
		\abs*{
			\int_0^1 [\nabla f(\gamma(t)) - \nabla f(x)]^\top \gamma'(t) \d t
		}
		\\
		&\le
		L \abs{x - y} \int_0^1 \abs{ \nabla f(\gamma(t)) - \nabla f(x) } \d t
		\\
		&\le
		2 L \abs{x - y} \norm{f}_{C^1(\bar\Omega)}
		.
	\end{align*}
	By continuity, the same estimate holds for
	$x, y \in \bar\Omega$ with $\abs{x - y} < r$.
	For $x, y \in \bar\Omega$ with $\abs{x - y} \ge r$
	we can simply use
	\begin{equation*}
		\abs{
			P^x(y) - P^y(y)
		}
		\le
		(2 + \diam(\Omega))
		\norm{f}_{C^1(\bar\Omega)}
		\le
		\frac{2+ \diam(\Omega)}{r} \norm{f}_{C^1(\bar\Omega)} \abs{x - y}
		.
	\end{equation*}
	This shows that
	property \ref{def:jet:2}
	of \cref{def:function_spaces} holds
	with $M = C \norm{f}_{C^1(\bar\Omega)}$,
	where the constant $C$ only depends on $\Omega$
	via $r$, $L$, and $\diam(\Omega)$.

	We denote by $\zeta$ the modulus of continuity of $\nabla f$ on $\bar\Omega$.
	Let again $x, y \in \Omega$ with $\abs{x - y} < r$
	be given and let $\gamma$ be a curve connecting $x$ and $y$.
	This enables us to estimate
	\begin{align*}
		\abs{
			P^x(y) - P^y(y)
		}
		&\le
		L \abs{x - y} \int_0^1 \abs{ \nabla f(\gamma(t)) - \nabla f(x) } \d t
		\\
		&\le
		L \abs{x - y} \zeta( L \abs{x - y} )
	\end{align*}
	and
	\begin{equation*}
		\abs{
			\partial_i P^x(y) - \partial_i P^y(y)
		}
		\le
		\zeta( L \abs{x - y} ).
	\end{equation*}
	By continuity, these estimates can be extended
	to $x,y \in \bar\Omega$.
	Consequently, property \ref{def:jet:1}
	of \cref{def:function_spaces} holds.
	Hence,
	$\seq{P^x}_{x \in \bar\Omega} \in \Cjet^1(\bar\Omega)$
	and
	\begin{equation*}
		\norm{ \seq{P^x}_{x \in \bar\Omega} }_{\Cjet^1(\bar\Omega)}
		\le
		C \norm{ f }_{C^1(\bar\Omega)}.
	\end{equation*}
	Finally, the jet can be extended via
	\cref{thm:whithney} and this yields the claim.
\end{proof}

\section{Continuous differentiability of the signum function}
\label{sec:cty_derivative_signum}
In this section,
we study
the function $\sign \colon C^1(\bar\Omega) \to W^{1,q}(\Omega)\dualspace$,
defined via
\begin{equation}
	\label{eq:signum}
	\dual{\sign( v )}{ \psi }_{W^{1,q}(\Omega)}
	:=
	\int_\Omega \sign(v) \psi \d\lambda
	\qquad
	\forall v \in C^1(\bar\Omega), \psi \in W^{1,q}(\Omega)
	.
\end{equation}
The exponent $q > 1$ is fixed throughout this section.
We are going to show that $\sign$ is
continuously
differentiable at points $w \in C^1(\bar\Omega)$
satisfying $\nabla w = 0$ on $\set{w = 0}$
and \eqref{eq:asm_bdry} below,
and the derivative is given by
\begin{equation}
	\label{eq:derivative_signum}
	\dual{\sign'( w ) z }{ \psi }_{W^{1,q}(\Omega)}
	:=
	2 \int_{\set{w = 0}} \frac{\psi z}{\abs{\nabla w}} \d\HH^{d-1}
	\qquad
	\forall z \in C^1(\bar\Omega), \psi \in W^{1,q}(\Omega).
\end{equation}

We use the following standing assumption on the domain $\Omega$.
\begin{assumption}[Standing assumption]
	\label{asm:standing}
	Suppose that $\Omega \subset \R^d$
	is open, bounded, and uniformly locally quasiconvex, see \cref{def:LUQ}.
	Further, we assume that $\Omega$ is an $(1,q)$-extension domain,
	i.e., that there exists a linear and bounded extension operator
	$E_W \colon W^{1,q}(\Omega) \to W^{1,q}(\R^d)$.
\end{assumption}
Throughout this section,
we denote by $E$ the extension operator given in \cref{thm:C1_extension}.
By $C_E$ we denote its operator norm.

We note that the sets of the form $\set{w = 0}$ depend on the domain of the function $w$.
In particular, for $w \in C^1(\bar\Omega)$ we have
$\set{w = 0} \subset \bar\Omega$
and
$\set{E w = 0} \subset \R^d$.

We specify the meaning of the integral in \eqref{eq:derivative_signum}
by showing that the trace of a Sobolev function on $\set{w = 0}$ is well defined.

\begin{lemma}
	\label{lem:trace}
	Let $w \in C^1(\bar\Omega)$ with $\nabla w \ne 0$ on $\set{w = 0}$ be given.
	Then,
	\begin{enumerate}
	 \item\label{it_trace_1} there is a bounded and linear operator $T_{\R^d} \colon W^{1,q}(\R^d) \to L^{q}(\HH^{d-1}|_{\set{w = 0}})$
	such that $T_{\R^d} \psi = \psi|_{\set{w = 0}}$
	for all $\psi \in C(\R^d) \cap W^{1,q}(\R^d)$,

	\item\label{it_trace_2}  there is a bounded and linear operator $T_\Omega \colon W^{1,q}(\Omega) \to L^{q}(\HH^{d-1}|_{\set{w = 0}})$
	such that
	$T_\Omega \psi = \psi|_{\set{w = 0}}$
	for all $\psi \in C(\bar\Omega) \cap W^{1,q}(\Omega)$,

	\item\label{it_trace_3} and $T_\Omega = T_{\R^d} E_W$.
	\end{enumerate}
\end{lemma}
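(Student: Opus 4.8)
The plan is to exploit that the condition $\nabla w \ne 0$ on $\set{w = 0}$ makes the zero level set a compact piece of a $C^1$-hypersurface, after which all three assertions become instances of the classical Sobolev trace theorem on such surfaces. First I would replace $w$ by its extension $E w \in \Cb^1(\R^d)$ from \cref{thm:C1_extension}. Since $\nabla(E w)$ equals $\nabla w$ on $\bar\Omega$, it is continuous and nonzero on the compact set $\set{w = 0}$, so there is a bounded open set $V$ with $\set{w = 0} \subset V$ and $\abs{\nabla (E w)} \ge c > 0$ on $V$. Hence $0$ is a regular value of $E w$ on $V$, and by the implicit function theorem $\Gamma := \set{E w = 0} \cap V$ is an embedded $C^1$-hypersurface with $\set{w = 0} = \Gamma \cap \bar\Omega$ a compact subset of it; in particular $\HH^{d-1}(\set{w = 0}) < \infty$, so the target space $L^q(\HH^{d-1}|_{\set{w = 0}})$ is a genuine Banach space.

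For claim (i) I would localise. Cover $\set{w = 0}$ by finitely many balls $B_1, \dots, B_N \subset V$ on each of which a $C^1$ change of coordinates $\Phi_j$ straightens $\Gamma \cap B_j$ to a piece of a hyperplane, and fix a subordinate smooth partition of unity $\theta_1, \dots, \theta_N$ with $\sum_j \theta_j \equiv 1$ on a neighbourhood of $\set{w = 0}$. Given $\psi \in W^{1,q}(\R^d)$, I would transport each $\theta_j \psi$ (compactly supported in $B_j$) through $\Phi_j$ — a $C^1$ diffeomorphism maps $W^{1,q}$ boundedly to $W^{1,q}$ — apply the classical half-space trace theorem $W^{1,q}(\set{x_d > 0}) \to L^q(\set{x_d = 0})$, pull the result back, restrict it to $\set{w = 0} \cap B_j$, and sum over $j$. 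Each step is linear and bounded, so this defines a bounded linear operator $T_{\R^d}$; and since coordinate changes, restriction and the classical trace all act as pointwise restriction on continuous functions while $\sum_j \theta_j = 1$ on $\set{w = 0}$, it satisfies $T_{\R^d}\psi = \psi|_{\set{w = 0}}$ for $\psi \in C(\R^d) \cap W^{1,q}(\R^d)$. I would also record that $T_{\R^d}$ is \emph{local}, i.e.\ $T_{\R^d}\psi$ depends only on $\psi$ near $\set{w = 0}$, since the classical trace is.

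For claims (ii) and (iii) I would simply put $T_\Omega := T_{\R^d} E_W$, which is bounded and linear as a composition; this is claim (iii), and only the restriction property remains. Let $\psi \in C(\bar\Omega) \cap W^{1,q}(\Omega)$ and $g := E_W \psi \in W^{1,q}(\R^d)$. By the standard identification of the $W^{1,q}$-trace on a $C^1$-hypersurface with the precise (Lebesgue) representative $\tilde g$ of $g$, one has $T_{\R^d} g = \tilde g|_{\set{w = 0}}$ $\HH^{d-1}$-a.e., so it suffices to show $\tilde g = \psi$ $\HH^{d-1}$-a.e.\ on $\set{w = 0}$. Since $g = \psi$ a.e.\ on $\Omega$ and $\psi$ is continuous, every $x \in \Omega$ is a Lebesgue point of $g$ with value $\psi(x)$; hence $\tilde g = \psi$ on $\set{w = 0} \cap \Omega$. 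For the remaining points, which lie in $\set{w = 0} \cap \partial\Omega$, I would use that $g \in W^{1,q}(\R^d)$ has matching one-sided traces on the $C^1$-hypersurface $\Gamma$ and that on the side occupied by $\Omega$ this trace equals $\psi$ by continuity of $\psi \in C(\bar\Omega)$; this yields $\tilde g = \psi$ $\HH^{d-1}$-a.e.\ there as well, and therefore $T_\Omega \psi = \psi|_{\set{w = 0}}$.

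I expect the genuine difficulty to be concentrated in this last step, namely the behaviour at $\set{w = 0} \cap \partial\Omega$. This set can have positive $\HH^{d-1}$-measure — the extended function $E w$ need not cross zero transversally to $\partial\Omega$ — so the trace of $g = E_W\psi$ there is not simply its continuous value, and one has to show that, up to an $\HH^{d-1}$-negligible subset of $\set{w = 0}$, the domain $\Omega$ occupies one of the two open sides of $\Gamma$, on which $g$ coincides with the continuous function $\psi$; this is where the standing assumptions on $\Omega$ (uniform local quasiconvexity, extension property) must be brought to bear. The remaining ingredients are standard, but the chart-and-partition-of-unity bookkeeping underlying claim (i) still needs to be executed with some care.
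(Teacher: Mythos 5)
Your treatment of claim \ref{it_trace_1} (localise, straighten with $C^1$ charts, invoke the half-space trace, sum over a partition of unity) is essentially the paper's ``usual arguments,'' and your definition $T_\Omega := T_{\R^d} E_W$ for claims \ref{it_trace_2}--\ref{it_trace_3} is identical to the paper's. You also correctly locate the crux: showing $T_\Omega\psi = \psi$ holds $\HH^{d-1}$-a.e.\ on $\set{w=0}\cap\partial\Omega$, a set that can carry positive $\HH^{d-1}$-measure.

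However, the route you sketch for that crux does not obviously close, and it differs from what the paper does. You propose to show that, up to an $\HH^{d-1}$-null subset, $\Omega$ occupies a single open side of the level hypersurface $\Gamma := \set{Ew = 0}$, and to then identify the one-sided $W^{1,q}$-trace of $E_W\psi$ with the continuous boundary values of $\psi$. There are two unresolved points here. First, the standing hypotheses permit merely Lipschitz (indeed only uniformly locally quasiconvex, extension-domain) geometry for $\partial\Omega$, and it is not established that $\partial\Omega$ lines up with the $C^1$-hypersurface $\Gamma$ in the one-sided manner you require at $\HH^{d-1}$-a.a.\ points of their intersection. Second, even granting one-sidedness, it is not explained why the one-sided $W^{1,q}$-trace of $E_W\psi$ (a priori defined only by density of smooth functions, not by pointwise limits) agrees with the continuous values of $\psi$ on $\bar\Omega$.

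The paper avoids both issues with a purely measure-theoretic argument that is indifferent to the relative geometry of $\partial\Omega$ and $\Gamma$. It first observes, via the Lebesgue-point machinery of \cite[Remark~4.4.5]{Ziemer1989}, that for $\HH^{d-1}$-a.a.\ $x_0\in\set{w=0}$ there is a set $A$ of Lebesgue density one at $x_0$ along which $E_W\psi$ converges to its precise value $(E_W\psi)(x_0)$. It then uses the fact that, because $\Omega$ is a $(1,q)$-extension domain, $\Omega$ has uniformly positive Lebesgue density at every boundary point (\cite[Theorem~2]{HajlaszKoskelaTuominen2008}). Intersecting, $A\cap\Omega$ has positive density at $x_0$, hence is nonempty arbitrarily close to $x_0$; passing to the limit through $A\cap\Omega$, where $E_W\psi = \psi$ a.e.\ and $\psi\in C(\bar\Omega)$, gives $(E_W\psi)(x_0)=\psi(x_0)$. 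A separate mollification step then identifies $T_{\R^d}E_W\psi$ with this precise representative on $\set{w=0}$. This is the step at which the extension-domain hypothesis is actually used, and it entirely sidesteps the question of which side of $\Gamma$ the domain lies on.
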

\begin{proof}
\ref{it_trace_1}
	The set $\set{w = 0} \subset \bar\Omega$ is a compact subset
	of the $C^1$-hypersurface $\set{ E w = 0} \cap \set{ \nabla E w \ne 0}$.
	Consequently, one can use the usual arguments to check
	the existence of $C_T \ge 0$
	such that
	\begin{equation*}
		\int_{\set{w = 0}} \abs{\psi}^q \d\HH^{d-1}
		\le
		C_T^q \norm{\psi}_{W^{1,q}(\R^d)}^q
		\qquad
		\forall
		\psi \in C^1(\R^d) \cap W^{1,q}(\R^d).
	\end{equation*}
	This gives rise to a continuous trace operator
	from
	$C^1(\R^d) \cap W^{1,q}(\R^d)$ (equipped with the norm of $W^{1,q}(\R^d)$)
	into
	$L^q(\HH^{d-1}|_{\set{w = 0}})$.
	By density and continuity, this operator can be extended to all of $\psi \in W^{1,q}(\R^d)$, which gives rise to the trace operator $T_{\R^d}$.
	For $\psi \in C(\R^d) \cap W^{1,q}(\R^d)$, the identity $T_{\R^d} \psi = \psi|_{\set{w = 0}}$ follows by a mollification argument.
	This shows \ref{it_trace_1}.

We define  $T_\Omega := T_{\R^d} E_W$, so that \ref{it_trace_3} and the boundedness claim of  \ref{it_trace_2} follow trivially.
Let us show that  $T_\Omega\psi$ coincides with $\psi|_{\set{w = 0}}$ for all $\psi \in C(\bar\Omega) \cap W^{1,q}(\Omega)$.

First, we will argue as in \cite[Remark~4.4.5]{Ziemer1989} to prove that $E_W\psi(x_0) = \psi(x_0)$ for  $\HH^{d-1}$-a.a. $x_0\in\set{w=0}$.
As shown there, $\HH^{d-1}$-almost every $x_0 \in \{w=0\}$ is a Lebesgue point of $E_W \psi$.
Take such a Lebesgue point $x_0 \in \{w=0\}$. Then \cite[Remark~4.4.5, page 190]{Ziemer1989}
constructs a measurable set $A\subset \R^d$ such that $\lim_{A \ni x \to x_0} (E_W\psi)(x) = (E_W\psi)(x_0)$
and $\lim_{r\searrow0} \frac{\lambda(B(x_0,r)\cap A)} {\lambda(B(x_0,r))}=1$.
Since $\Omega$ is assumed to be an $(1,q)$-extension domain, \cite[Theorem~2]{HajlaszKoskelaTuominen2008} implies that $\Omega$
possesses a uniformly positive Lebesgue density at all points, which implies that $\Omega$
possesses a positive Lebesgue density at all points
of the boundary $\partial\Omega$, i.e., $\lim_{r\searrow0} \frac{\lambda(B(x_0,r)\cap \Omega)} {\lambda(B(x_0,r))}=:c>0$.
Due to $\chi_A \chi_\Omega \ge \chi_A+ \chi_\Omega -1$, it follows $\lim_{r\searrow0} \frac{\lambda(B(x_0,r)\cap \Omega \cap A)} {\lambda(B(x_0,r))}=c>0$.
In particular, $B(x_0,r)\cap \Omega \cap A$ is non-empty for all $r>0$.
We conclude
\begin{equation}\label{eq_magic_trace_identity}
 (E_W\psi)(x_0) = \lim_{\substack{x\to x_0\\x\in A}} (E_W\psi)(x) = \lim_{\substack{x\to x_0\\x\in A\cap \Omega}} (E_W\psi)(x)
 = \psi(x_0),
\end{equation}
since $E_W \psi$ coincides with $\psi$ a.e.\ on $\Omega$.

Second, we prove $(T_{\R^d} E_W\psi)(x_0)=(E_W\psi)(x_0) $ for  $\HH^{d-1}$-a.a. $x_0\in \set{w=0}$.
To this end, we use a mollification argument. Let $\rho_\epsilon$ denote the standard mollification kernel.
Then $\rho_\epsilon \mathbin{*} (E_W\psi) \to E_W\psi$ in $W^{1,q}(\R^d)$ for $\epsilon \to0$ and
$(\rho_\epsilon \mathbin{*} (E_W\psi))(x) \to E_W\psi(x)$ for all Lebesgue points of $E_W\psi$, \cite[Section 4.2, Theorem 1(iv)]{EvansGariepy1992}.
From claim \ref{it_trace_1}, we get $T_{\R^d}(\rho_\epsilon \mathbin{*} (E_W\psi)) \to T_{\R^d}E_W\psi = T_\Omega\psi$ in $L^q(\HH^{d-1}|_{\set{w = 0}})$.
In addition, $T_{\R^d}(\rho_\epsilon \mathbin{*} (E_W\psi)) = \rho_\epsilon \mathbin{*} (E_W\psi)|_{\set{w=0}}$.
Passing to the limit $\epsilon\searrow0$ in this identity implies $T_\Omega\psi = E_W\psi$ $\HH^{d-1}$-a.e. on $\set{w=0}$. Together with \eqref{eq_magic_trace_identity} this proves
$T_\Omega\psi(x_0) = \psi(x_0)$ for $\HH^{d-1}$-a.a. $x_0 \in \set{w=0}$,
which finishes the proof of \ref{it_trace_2}.
\end{proof}

In the sequel, we will follow the common practice to suppress the appearance of the trace operators, which is justified by the previous result.

For $\eta > 0$,
we define a neighborhood of the boundary via
\begin{equation*}
	\partial_\eta\Omega := \set{ x \in \R^d \given \dist(x, \partial\Omega) < \eta }
\end{equation*}
and
a neighborhood of $\bar\Omega$ via
\begin{equation*}
	\Omega_\eta
	:=
	\partial_\eta \Omega \cup \Omega
	=
	\set{x \in \R^d \given \dist(x, \Omega) < \eta}
	.
\end{equation*}

We start with an estimate in the neighborhood of the boundary.
\begin{lemma}
	\label{lem:cty_bdry}
	Let $w \in C^1(\bar\Omega)$ with $\nabla w \ne 0$ on $\set{w = 0}$ be given
	such that
	\begin{equation}
		\label{eq:asm_bdry}
		\HH^{d-1}\parens*{
			\set{w = 0} \cap \partial\Omega
		}
		=
		0
		.
	\end{equation}
	Then, for every $\varepsilon > 0$,
	there exists $\eta > 0$ such that
	$\nabla (E w) \ne 0$ on $\set{E w = 0} \cap \overline{\partial_\eta \Omega}$
	and
	\begin{equation*}
		\int_{\set{E w = 0} \cap \partial_\eta \Omega } \frac{\psi}{\abs{\nabla (E w)}} \d\HH^{d-1}
		\le
		\varepsilon
		\norm{\psi}_{W^{1,q}(\R^d)}
		\qquad\forall \psi \in W^{1,q}(\R^d)
		.
	\end{equation*}
	Here, the trace of $\psi$ on $\set{E w = 0} \cap \partial_\eta \Omega$
	has to be understood similarly to \cref{lem:trace}.
\end{lemma}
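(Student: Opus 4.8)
The plan is to exploit the assumption $\HH^{d-1}(\{w=0\}\cap\partial\Omega)=0$ together with the continuity of the trace operator on the global $C^1$-hypersurface $\Sigma:=\{E w = 0\}\cap\{\nabla (E w)\ne 0\}$. First I would record that, since $\nabla w \ne 0$ on the compact set $\{w=0\}\subset\bar\Omega$ and $\nabla(E w)=\nabla w$ there, continuity of $\nabla(E w)$ gives an open neighborhood $V$ of $\{w=0\}$ in $\R^d$ on which $\nabla(E w)\ne 0$; hence for $\eta$ small enough $\{E w = 0\}\cap\overline{\partial_\eta\Omega}\subset V$, which settles the first assertion and shows that on this set $1/\abs{\nabla(E w)}$ is bounded by some constant $c_0$. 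As in \cref{lem:trace}\,\ref{it_trace_1}, there is a bounded trace operator from $W^{1,q}(\R^d)$ into $L^q(\HH^{d-1}|_{\Sigma\cap\overline{\partial_\eta\Omega}})$ for each small $\eta$, with a norm bound uniform in $\eta$ (because the hypersurface pieces sit inside the fixed $C^1$-hypersurface $\Sigma\cap \overline{V}$); call the bound $C_T$.

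The core of the argument is then a Hölder estimate combined with a smallness statement for the measure of the relevant piece of the hypersurface. For $\psi\in W^{1,q}(\R^d)$ I would estimate
\begin{equation*}
	\abs*{\int_{\{E w = 0\}\cap\partial_\eta\Omega}\frac{\psi}{\abs{\nabla(E w)}}\d\HH^{d-1}}
	\le
	c_0\,\HH^{d-1}\parens*{\{E w = 0\}\cap\partial_\eta\Omega}^{1-1/q}
	\norm{\psi}_{L^q(\HH^{d-1}|_{\{E w = 0\}\cap\partial_\eta\Omega})}
	\le
	c_0\,C_T\,\HH^{d-1}\parens*{\{E w = 0\}\cap\partial_\eta\Omega}^{1-1/q}
	\norm{\psi}_{W^{1,q}(\R^d)}.
\end{equation*}
Since $q>1$, the exponent $1-1/q>0$, so it suffices to choose $\eta$ so small that $\HH^{d-1}(\{E w = 0\}\cap\partial_\eta\Omega)\le (\varepsilon/(c_0 C_T))^{q/(q-1)}$.

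The main obstacle, and the step I would spend the most care on, is the continuity-from-above of the measure: I need $\HH^{d-1}(\{E w = 0\}\cap\partial_\eta\Omega)\to \HH^{d-1}(\{E w = 0\}\cap\partial\Omega)$ as $\eta\searrow 0$, and the right-hand side must be shown to vanish. The identity $\bigcap_{\eta>0}\partial_\eta\Omega = \partial\Omega$ (since $\dist(\cdot,\partial\Omega)$ is continuous) gives $\bigcap_{\eta>0}\bigl(\{E w = 0\}\cap\partial_\eta\Omega\bigr) = \{E w = 0\}\cap\partial\Omega$; restricting to $\eta$ small enough that $\{E w = 0\}\cap\overline{\partial_\eta\Omega}$ lies in the fixed $C^1$-hypersurface $\Sigma$, the measure $\HH^{d-1}\lfloor\Sigma$ is locally finite there, so continuity from above of this finite measure applies and yields $\HH^{d-1}(\{E w = 0\}\cap\partial_\eta\Omega)\to\HH^{d-1}(\{E w = 0\}\cap\partial\Omega)$. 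It remains to identify this limit with the quantity appearing in \eqref{eq:asm_bdry}. Here I would note $\{E w = 0\}\cap\partial\Omega\subset\{w=0\}\cap\partial\Omega$ (using $E w = w$ on $\bar\Omega$, so a boundary point with $E w=0$ has $w=0$), whence $\HH^{d-1}(\{E w = 0\}\cap\partial\Omega)\le\HH^{d-1}(\{w=0\}\cap\partial\Omega)=0$ by \eqref{eq:asm_bdry}. This closes the gap; the remaining estimates are the routine Hölder/trace bounds sketched above.
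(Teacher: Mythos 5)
Your proof is correct and follows essentially the same route as the paper's: bound $1/\abs{\nabla(Ew)}$ on a boundary collar using compactness and continuity of $\nabla(Ew)$, establish local finiteness of $\HH^{d-1}$ on the relevant piece of the $C^1$-level set, use continuity-from-above (the paper phrases this via dominated convergence) together with \eqref{eq:asm_bdry} to make $\HH^{d-1}(\set{Ew=0}\cap\partial_\eta\Omega)$ small, and close via Hölder and the trace estimate with exponent $1/q'=1-1/q>0$.
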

\begin{proof}
	By compactness of $\bar\Omega$,
	there exists a constant $\nu > 0$ such that
	$\abs{\nabla w} \ge \nu$ on $\set{w = 0}$.
	Consequently,
	$\abs{\nabla (E w)} \ge \nu$ on $\set{E w = 0} \cap \bar\Omega$.
	Using continuity of $E w$ and $\nabla (E w)$,
	this shows the existence of $\eta_0 > 0$ with
	$\abs{\nabla (E w)} \ge \nu / 2$ on $\set{E w = 0} \cap \overline{\partial_{\eta_0} \Omega}$.
	Then in a neighborhood of $\overline{\partial_{\eta_0} \Omega}$, $\set{E w = 0}$ is locally the graph of a continuously differentiable function.
	Since the set $\overline{\partial_{\eta_0} \Omega}$ is compact,
	it follows $\HH^{d-1}(\set{E w = 0} \cap \overline{\partial_{\eta_0} \Omega})<\infty$.
	Consequently,
	the dominated convergence theorem implies
	\begin{equation*}
		\HH^{d-1}\parens{\set{E w = 0} \cap \partial_\eta \Omega }
		\to
		\HH^{d-1}\parens{\set{E w = 0} \cap \partial \Omega}
		=
		\HH^{d-1}\parens{\set{w = 0} \cap \partial \Omega}
		=
		0
	\end{equation*}
	as $\eta \to 0$.
	Using the analogue of \cref{lem:trace},
	we get the trace estimate
	\begin{equation*}
		\norm{\psi}_{L^p(\HH^{d-1}|_{\set{E w = 0} \cap \Omega_{\eta_0}})}
		=
		\parens*{
			\int_{\set{E w = 0} \cap \Omega_{\eta_0}} \abs{\psi}^q \d\HH^{d-1}
		}^{1/q}
		\le
		C \norm{\psi}_{W^{1,q}(\R^d)}
	\end{equation*}
	for all $\psi \in W^{1,q}(\R^d)$.
	Consequently,
	\begin{align*}
		\int_{\set{E w = 0} \cap \partial_{\eta} \Omega} \frac{\psi}{\abs{\nabla (E w)}} \d\HH^{d-1}
		&\le
		\frac{2}{\nu} \HH^{d-1}(\set{E w = 0} \cap \partial_\eta \Omega)^{1/q'} C \norm{\psi}_{W^{1,q}(\R^d)}
		\\&
		\le
		\varepsilon \norm{\psi}_{W^{1,q}(\R^d)}
		\qquad
		\forall
		\psi \in W^{1,q}(\R^d)
	\end{align*}
	for $\eta \in (0,\eta_0)$ small enough.
\end{proof}
By combining \cref{lem:cty_bdry}
with \cref{cor:diff_signum}
we get a weak form of differentiability.
\begin{lemma}
	\label{lem:differentiability_signum}
	Let $w \in C^1(\bar\Omega)$ with $\nabla w \ne 0$ on $\set{w = 0}$ be given
	such that \eqref{eq:asm_bdry} holds.
	Then, for every $\psi \in C(\bar\Omega) \cap W^{1,q}(\Omega)$,
	the function
	$C^1(\bar\Omega) \ni v \mapsto \dual{\sign(v)}{\psi}_{W^{1,q}(\Omega)} \in \R$
	is
	Gâteaux differentiable at $w$
	with derivative
	$C^1(\bar\Omega) \ni z \mapsto \dual{\sign'( w ) z }{ \psi }_{W^{1,q}(\Omega)} \in \R$.
\end{lemma}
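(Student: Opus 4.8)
The plan is to deduce the claim from \cref{cor:diff_signum}, which already yields this type of (weak) directional differentiability --- but only on a bounded open set, only for directions with compact support, and only against continuous weights. Since $z$ and $\psi$ need not vanish near $\partial\Omega$ and $\psi$ is merely continuous, I would bridge these gaps by passing to the extension $\bar w := E w$ on a neighbourhood $\Omega_\eta$ of $\bar\Omega$, multiplying the extended direction $\bar z := E z$ by a cutoff $\theta \in C^\infty_c(\Omega_\eta)$ with $\theta \equiv 1$ on $\bar\Omega$, and using \cref{lem:cty_bdry} together with the boundary hypothesis \eqref{eq:asm_bdry} to absorb the contribution of the strip $\Omega_\eta \setminus \Omega$. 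Note first that the candidate derivative $z \mapsto 2\int_{\set{w=0}} \frac{\psi z}{\abs{\nabla w}} \d\HH^{d-1}$ is linear in $z$ and bounded on $C^1(\bar\Omega)$ ($\set{w=0}$ is a compact piece of a $C^1$-hypersurface, hence has finite $\HH^{d-1}$-measure, and $\abs{\nabla w}$ is bounded below on it), so it suffices to prove that for each fixed $z$ the one-sided limit $\lim_{t \searrow 0} \frac1t\bigl(\dual{\sign(w+tz)}{\psi}_{W^{1,q}(\Omega)} - \dual{\sign(w)}{\psi}_{W^{1,q}(\Omega)}\bigr)$ exists and equals this expression; linearity of the limit in $z$ then upgrades it to the two-sided Gâteaux derivative. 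Abbreviate this difference quotient by $I_t$ and its alleged limit by $I := 2\int_{\set{w=0}} \frac{\psi z}{\abs{\nabla w}} \d\HH^{d-1}$.

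Concretely, fix $z$, $\psi$ and $\varepsilon > 0$, choose a compactly supported $\psi_c \in C(\R^d)$ with $\psi_c = \psi$ on $\bar\Omega$ (Tietze's extension theorem), and apply \cref{lem:cty_bdry} to get $\eta \in (0,1)$ with $\nabla \bar w \ne 0$ on $\set{\bar w = 0} \cap \overline{\partial_\eta\Omega}$ and the stated $W^{1,q}$-trace estimate on $\set{\bar w = 0} \cap \partial_\eta\Omega$; combining this with $\nabla w \ne 0$ on $\set{w=0}$ and $\Omega_\eta \subset \bar\Omega \cup \overline{\partial_\eta\Omega}$ gives $\nabla \bar w \ne 0$ on $\set{\bar w = 0} \cap \Omega_\eta$, from which also $\HH^{d-1}(\set{\bar w=0} \cap \overline{\Omega_\eta}) < \infty$ and a lower bound for $\abs{\nabla \bar w}$ there. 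Setting $\tilde z := \theta \bar z \in C^1_c(\Omega_\eta)$, the triple $(\bar w, \tilde z, \psi_c)$ meets the hypotheses of \cref{cor:diff_signum} on the bounded open set $\Omega_\eta$, hence the $\Omega_\eta$-integral $I'_t$ of $\frac1t(\sign(\bar w + t\tilde z) - \sign(\bar w)) \psi_c$ converges to $I' := 2\int_{\set{\bar w=0} \cap \Omega_\eta} \frac{\psi_c \tilde z}{\abs{\nabla \bar w}} \d\HH^{d-1}$. Since $\bar w, \tilde z, \psi_c$ restrict to $w, z, \psi$ on $\bar\Omega$, we have $I'_t - I_t = R_t := \int_{\Omega_\eta \setminus \Omega} \frac1t(\sign(\bar w + t\tilde z) - \sign(\bar w)) \psi_c \d\lambda$, and --- discarding the $\HH^{d-1}$-null set $\set{w=0} \cap \partial\Omega$ via \eqref{eq:asm_bdry} --- $I' - I = 2\int_{\set{\bar w=0} \cap (\Omega_\eta \setminus \bar\Omega)} \frac{\psi_c \tilde z}{\abs{\nabla \bar w}} \d\HH^{d-1}$. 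Because $\Omega_\eta \setminus \bar\Omega \subset \partial_\eta\Omega$, the last quantity is bounded by $2\norm{\psi_c}_\infty \norm{\tilde z}_\infty \int_{\set{\bar w=0} \cap \partial_\eta\Omega} \abs{\nabla \bar w}^{-1} \d\HH^{d-1}$, which by \cref{lem:cty_bdry} applied to a fixed cutoff $\beta \in C^\infty_c(\R^d)$ with $\beta \equiv 1$ on $\overline{\partial_1\Omega}$ is $\le C\varepsilon$ with $C$ independent of $\varepsilon$.

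The genuine obstacle is to bound $\limsup_{t \searrow 0} \abs{R_t}$ \emph{uniformly in $t$}. Using $\abs{\sign(\bar w + t\tilde z) - \sign(\bar w)} \le 2\chi_{U_t}$ with $U_t := \set{\sign(\bar w) \ne \sign(\bar w + t\tilde z)}$, one has $\abs{R_t} \le 2\norm{\psi_c}_\infty \frac1t \lambda(U_t \setminus \Omega)$, so it suffices to control $\limsup_t \frac1t\lambda(U_t \setminus \Omega)$. I would obtain this from \cref{thm:taylor_expansion_integral_rn}\,\eqref{eq:diff_int_rn_a} on $\Omega_\eta$: testing with $\psi \equiv 1$ gives $\frac1t\lambda(U_t) \to \int_{\set{\bar w=0} \cap \Omega_\eta} \frac{\abs{\tilde z}}{\abs{\nabla \bar w}} \d\HH^{d-1}$, and testing with functions $\phi_n \in C_c(\Omega)$ satisfying $0 \le \phi_n \nearrow \chi_\Omega$ bounds $\liminf_t \frac1t\lambda(U_t \cap \Omega)$ from below; subtracting, passing to $\limsup_t$, and then letting $n \to \infty$ (monotone/dominated convergence, using the lower bound for $\abs{\nabla \bar w}$ and finiteness of $\HH^{d-1}(\set{\bar w=0} \cap \overline{\Omega_\eta})$) yields $\limsup_t \frac1t\lambda(U_t \setminus \Omega) \le \int_{\set{\bar w=0} \cap (\Omega_\eta \setminus \bar\Omega)} \frac{\abs{\tilde z}}{\abs{\nabla \bar w}} \d\HH^{d-1} \le \norm{\tilde z}_\infty \int_{\set{\bar w=0} \cap \partial_\eta\Omega} \abs{\nabla \bar w}^{-1} \d\HH^{d-1} \le C\varepsilon$ by \cref{lem:cty_bdry}, again discarding the null set $\set{w=0} \cap \partial\Omega$. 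Combining the three bounds, $\limsup_{t \searrow 0} \abs{I_t - I} \le \limsup_t \abs{R_t} + \abs{I' - I} \le C'\varepsilon$, and since $\varepsilon > 0$ was arbitrary, $I_t \to I$, which is the assertion. So the ``interior'' convergence is a direct invocation of \cref{cor:diff_signum}, and the real work lies in this $t$-uniform estimate of the boundary strip, where \cref{lem:cty_bdry} and the hypothesis \eqref{eq:asm_bdry} are indispensable.
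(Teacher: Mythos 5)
Your proof is correct, but it takes a genuinely different route from the paper's. The paper works entirely inside $\Omega$ with a partition of unity: it chooses $\varphi_1 \in C_c^\infty(\Omega;[0,1])$ equal to $1$ away from the boundary and $\varphi_2 \in C_c^\infty(\partial_\eta\Omega;[0,1])$ dominating $1-\varphi_1$ on $\Omega$, splits the difference quotient into $I_1 + I_2$ accordingly, handles the interior term $I_1$ by \cref{cor:diff_signum} on $U=\Omega$ (after cutting off $z$ with a further $\varphi_3$), and bounds the boundary term $I_2$ by passing to the extended functions and applying \cref{cor:diff_signum} a second time on $U = \partial_\eta\Omega$; each piece then converges individually and the boundary contributions are small by \cref{lem:cty_bdry}. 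You instead extend $w$, $z$, $\psi$ to $\Omega_\eta$, apply \cref{cor:diff_signum} once on the enlarged domain, and then control the difference $I'_t - I_t$, i.e.\ the contribution of the strip $\Omega_\eta \setminus \Omega$, \emph{uniformly in $t$} rather than in the limit. That uniform strip estimate is the genuinely new ingredient in your argument: you squeeze $\frac1t\lambda(U_t\setminus\Omega)$ between the total mass from \cref{thm:taylor_expansion_integral_rn}\,\eqref{eq:diff_int_rn_a} tested with $\psi\equiv 1$ and the interior mass bounded from below via $\phi_n\nearrow\chi_\Omega$ and monotone convergence. Both routes ultimately invoke \cref{lem:cty_bdry} plus \eqref{eq:asm_bdry} to make the strip small; the paper's version avoids the monotone-convergence squeeze and is a touch shorter, while yours has the minor advantage that \cref{cor:diff_signum} is applied with a direction $\tilde z = \theta E z$ that genuinely has compact support in the bounded open set $\Omega_\eta$ (the paper's application on $U = \partial_\eta\Omega$ uses $Ez$, which is not compactly supported there, so a tacit cutoff is implied). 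You also make explicit, and correctly, that the candidate derivative is a bounded linear functional so that the one-sided limit plus linearity upgrades to Gâteaux differentiability, matching the paper's closing remark.
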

\begin{proof}
	Let $\varepsilon > 0$ be given
	and choose $\eta > 0$ according to \cref{lem:cty_bdry}.
	Next, we choose functions
	$\varphi_1 \in C_c^\infty(\Omega; [0,1])$
	and
	$\varphi_2 \in C_c^\infty(\partial_\eta \Omega; [0,1])$
	such that
	\begin{equation*}
		\varphi_1 = 1 \text{ on } \Omega \setminus \partial_{\eta/2}\Omega
		\quad\text{and}\quad
		\varphi_2 = 1 \text{ on } \Omega \cap \set{ \varphi_1 < 1 }
		.
	\end{equation*}
	This construction implies $1-\varphi_1 \le \varphi_2$ on $\Omega \cap \partial_\eta \Omega$.

	We further fix $z \in C^1(\bar\Omega)$ and $\psi \in C(\bar\Omega) \cap W^{1,q}(\Omega)$.
	We have
	\begin{align*}
		I &:=
		\frac{
			\dual{\sign(w + t z)}{\psi}_{W^{1,q}(\Omega)}
			-
			\dual{\sign(w + t z)}{\psi}_{W^{1,q}(\Omega)}
		}{t}
		-
		\dual{\sign'(w) z}{\psi}_{W^{1,q}(\Omega)}
		\\
		&=
		\int_\Omega \frac{\sign(w + t z) - \sign(w)}{t} \psi \d\lambda
		-
		2 \int_{\set{w = 0}} \frac{\psi z}{\abs{\nabla w}} \d\HH^{d-1}
		\\
		&=
		\parens*{
			\int_\Omega \frac{\sign(w + t z) - \sign(w)}{t} \psi \varphi_1 \d\lambda
			-
			2 \int_{\set{w = 0}} \frac{\psi \varphi_1 z}{\abs{\nabla w}} \d\HH^{d-1}
		}
		\\
		&\qquad+
		\parens*{
			\int_\Omega \frac{\sign(w + t z) - \sign(w)}{t} \psi (1 - \varphi_1) \d\lambda
			-
			2 \int_{\set{w = 0}} \frac{\psi (1 - \varphi_1) z}{\abs{\nabla w}} \d\HH^{d-1}
		}
		\\
		&=:
		I_1 + I_2
		.
	\end{align*}
	Since $\varphi_1$ has compact support in $\Omega$, there is $\varphi_3 \in C_c^\infty( \Omega; [0,1])$ with $\varphi_3 =1$ on $\supp \varphi_1$.
	From \cref{cor:diff_signum} applied to $z := z\varphi_3 \in C^1_c(\Omega)$
	and $\psi := \psi \varphi_1 \in C(\Omega)$,
	we get $\abs{I_1} \to 0$ as $t \searrow 0$.
	For the second term, we use
	\begin{align*}
		\abs{I_2}
		&\le
		\int_\Omega \frac{\abs{\sign(w + t z) - \sign(w)}}{t} \abs{\psi} (1 - \varphi_1) \d\lambda
		+
		2 \int_{\set{w = 0}} \frac{\abs{\psi} (1 - \varphi_1) \abs{z}}{\abs{\nabla w}} \d\HH^{d-1}
		\\
		&\le
		\parens*{
			\int_{\partial_\eta \Omega} \frac{\abs{\sign(E w + t E z) - \sign(E w)}}{t} \varphi_2 \d\lambda
			+
			2 \int_{\set{E w = 0}} \frac{ \varphi_2 \abs{E z}}{\abs{\nabla (E w)}} \d\HH^{d-1}
		}
		\norm{\psi}_{C(\bar\Omega)}
		\\
		&=: (I_3 + I_4) \norm{\psi}_{C(\bar\Omega)}
		.
	\end{align*}
	The expression $I_4$ can be bounded via \cref{lem:cty_bdry}, i.e.,
	\begin{equation*}
		I_4
		\le
		2 \norm{E z}_{\Cb^1(\R^d)}
		\int_{\set{E w = 0} \cap \partial_\eta \Omega } \frac{ \varphi_2 }{\abs{\nabla (E w)}} \d\HH^{d-1}
		\le
		2 C_E \norm{z}_{C^1(\bar\Omega)} \norm{\varphi_2}_{W^{1,q}(\R^d)} \varepsilon.
	\end{equation*}
	The term involving $I_3$
	can be handled by applying \cref{cor:diff_signum}
	on $U = \partial_\eta \Omega$
	and we obtain
	\begin{equation*}
		I_3
		\to
		2 \int_{\set{E w = 0} \cap \partial_\eta \Omega} \frac{\varphi_2 \abs{E z}}{\abs{\nabla (E w)}} \d\HH^{d-1}
		=
		I_4
		\le
		2 C_E \norm{z}_{C^1(\bar\Omega)} \norm{\varphi_2}_{W^{1,q}(\R^d)} \varepsilon
	\end{equation*}
	as $t \searrow 0$.
	Combining the above estimates shows that $\abs{I}$
	can be made arbitrarily small by choosing $t > 0$ small enough.
	This shows that $v \mapsto \dual{\sign(v)}{\psi}_{W^{1,q}(\Omega)}$
	is directionally differentiable at $w$.
	Since the directional derivative is linear w.r.t.\ the direction $z$,
	this yields the desired Gâteaux differentiability.
\end{proof}
Note that the proof only uses the regularity $\psi \in C(\bar\Omega)$
for the estimates.
We only required $\psi \in W^{1,q}(\Omega)$,
since $\sign'(w) z$ was defined to be an element in the dual space of $W^{1,q}(\Omega)$,
see \eqref{eq:derivative_signum}.

As a next goal, we want to prove the continuity of the derivative
$\sign'$.
The next lemma guarantees that \cref{lem:trace}
can also be applied to $\tilde w$ in the neighborhood of $w$.
\begin{lemma}
	\label{lem:stability_of_reg_condition}
	Let $w \in C^1(\bar\Omega)$ with $\nabla w \ne 0$ on $\set{w = 0}$ be given.
	Then there is $\delta>0$ such that for all $\tilde w \in C^1(\bar\Omega)$ with $\norm{w - \tilde w}_{C^1(\bar\Omega)} \le \delta $
	it follows $\nabla\tilde w \ne 0$ on $\set{\tilde w = 0}$.
\end{lemma}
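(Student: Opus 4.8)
The plan is to combine a compactness argument on the zero set $\set{w = 0}$ with a neighbourhood/separation argument. First I would use that $\set{w = 0}$ is a closed subset of the compact set $\bar\Omega$, hence compact, and that $\nabla w$ is continuous on $\bar\Omega$; together with the hypothesis $\nabla w \ne 0$ on $\set{w = 0}$ this yields a constant $\nu > 0$ with $\abs{\nabla w} \ge \nu$ on $\set{w = 0}$. By continuity of $\nabla w$, the set $V := \set*{x \in \bar\Omega \given \abs{\nabla w(x)} > \nu/2}$ is (relatively) open in $\bar\Omega$ and contains $\set{w = 0}$.

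Next I would separate the zero set from the region where $\nabla w$ is small. The complement $\bar\Omega \setminus V = \set*{x \in \bar\Omega \given \abs{\nabla w(x)} \le \nu/2}$ is closed in $\bar\Omega$, hence compact, and it is disjoint from $\set{w = 0}$. Therefore $w$ does not vanish on $\bar\Omega \setminus V$, and by continuity together with compactness there is $\mu > 0$ with $\abs{w} \ge \mu$ on $\bar\Omega \setminus V$. (If $\set{w = 0}$ happens to be empty, one may simply take $V = \emptyset$, and this already gives $\abs{w} \ge \mu$ on all of $\bar\Omega$, so the case distinction causes no difficulty.)

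Finally I would choose $\delta > 0$ small enough that $\norm{w - \tilde w}_{C^1(\bar\Omega)} \le \delta$ forces $\sup_{\bar\Omega} \abs{w - \tilde w} < \mu/2$ and $\sup_{\bar\Omega} \abs{\nabla w - \nabla \tilde w} < \nu/4$; this requires only $\delta$ of the order $\min\set{\mu, \nu}$, up to an implicit dimensional constant coming from the relation between the $C^1(\bar\Omega)$-norm and the pointwise size of $\nabla(w - \tilde w)$. For such $\tilde w$, on $\bar\Omega \setminus V$ we obtain $\abs{\tilde w} \ge \abs{w} - \mu/2 \ge \mu/2 > 0$, whence $\set{\tilde w = 0} \subset V$; and on $V$ we obtain $\abs{\nabla \tilde w} \ge \abs{\nabla w} - \nu/4 \ge \nu/4 > 0$. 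Combining these two facts shows $\nabla \tilde w \ne 0$ on $\set{\tilde w = 0}$, which is the assertion.

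The argument is essentially elementary, and I do not anticipate a genuine obstacle: the only points requiring mild care are the handling of a possibly empty zero set and making sure the ``small-gradient'' region is genuinely separated from $\set{w = 0}$, so that the lower bound $\abs{w} \ge \mu$ is actually available there.
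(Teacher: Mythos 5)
Your proof is correct and follows essentially the same route as the paper's: establish a uniform lower bound $\nu$ on $\abs{\nabla w}$ over the compact set $\set{w=0}$, enlarge to a relatively open neighbourhood where $\abs{\nabla w}\ge\nu/2$, use compactness of the complement to get a uniform lower bound $\mu$ on $\abs{w}$ there, and choose $\delta$ small relative to $\min(\mu,\nu)$. The only cosmetic differences are the explicit description of the neighbourhood $V$ and the (harmless but unnecessary) aside about the empty-zero-set case.
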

\begin{proof}
 Since $\set{w=0}$ is compact, there is $\delta_1$ such that $\abs{\nabla w} \ge \delta_1$ on $\set{w=0}$.
 By continuity of $\nabla w$, there is $U \subset \R^d$ open such that $U \supset \set{w=0}$ and $\abs{\nabla w} \ge \delta_1 / 2$ on $U$.
 Then there is $\delta_2>0$ such that $\abs{ w} \ge \delta_2$ on the compact set $\bar \Omega \setminus U$.
 The claim now follows with $0<\delta< \min(\delta_1/2,\delta_2)$.
\end{proof}

The next results gives almost the desired continuity,
but involves the extended functions and a cutoff function.
\begin{lemma}
	\label{lem:cty_for_extensions}
	Let $w \in C^1(\bar\Omega)$ with $\nabla w \ne 0$ on $\set{w = 0}$
	be given.
	Further, assume that $\eta_0 > 0$ is chosen such that
	$\nabla (E w) \ne 0$ on $\set{E w = 0} \cap \overline{\Omega_{\eta_0}}$.
	Then for
	all
	$\varepsilon > 0$
	and $\eta \in (0,\eta_0)$,
	there exists $\delta > 0$ such that
	\begin{equation*}
		\abs*{
			\int_{\set{E w = 0}} \frac{\psi}{\abs{\nabla (E w)}} \d \HH^{d-1}
			-
			\int_{\set{E \tilde w = 0}} \frac{\psi}{\abs{\nabla (E \tilde w)}} \d \HH^{d-1}
		}
		\le
		\varepsilon
		\norm{\psi}_{W^{1,q}(\Omega_{\eta_0})}
	\end{equation*}
	for all $\tilde w \in C^1(\bar\Omega)$ with $\norm{w - \tilde w}_{C^1(\bar\Omega)} \le \delta $
	and
	all $\psi \in W_0^{1,q}(\Omega_{\eta_0})$.
\end{lemma}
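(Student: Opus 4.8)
The plan is to describe the two zero level sets $\set{E w = 0}$ and $\set{E \tilde w = 0}$, near the compact region where they matter, as finitely many $C^1$-graphs over a common family of coordinate patches, and then to compare the two surface integrals patch by patch. The quantitative fact driving everything is that $\norm{w - \tilde w}_{C^1(\bar\Omega)} \le \delta$ forces $\norm{E w - E \tilde w}_{\Cb^1(\R^d)} \le C_E \delta$, so $E\tilde w$ is $C^1$-close to $E w$ on all of $\R^d$.

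First I would fix, by compactness, a constant $\nu > 0$ with $\abs{\nabla(E w)} \ge \nu$ on $\set{E w = 0} \cap \overline{\Omega_{\eta_0}}$, pass to an open neighbourhood $V$ of this set on which $\abs{\nabla(E w)} \ge \nu/2$, and observe that for $\delta$ small one has $\abs{\nabla(E\tilde w)} \ge \nu/4$ on $V$ and, since $\abs{E w}$ is bounded below on $\overline{\Omega_{\eta_0}} \setminus V$, also $\set{E\tilde w = 0} \cap \overline{\Omega_{\eta_0}} \subset V$. Next I would cover the compact set $\set{E w = 0} \cap \overline{\Omega_{\eta_0}}$ by finitely many balls $B_1, \dots, B_N \subset V$ so small that in each $B_j$ some partial derivative $\partial_{i_j}(E w)$ does not vanish and $\set{E w = 0} \cap B_j$ is the graph $x_{i_j} = g_j(\hat x_j)$ of a $C^1$-function over a bounded coordinate patch $U_j$ (implicit function theorem). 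A quantitative version of the implicit function theorem then yields, for $\delta$ small, that $\set{E\tilde w = 0} \cap B_j$ is likewise a $C^1$-graph $x_{i_j} = \tilde g_j(\hat x_j)$ over $U_j$ with $\norm{g_j - \tilde g_j}_{C^1(U_j)} \to 0$ as $\delta \to 0$; in particular $\set{E\tilde w = 0} \cap \overline{\Omega_{\eta_0}} \subset \bigcup_j B_j$ and the $g_j, \tilde g_j$ have uniformly bounded $C^1$-norms. After fixing a partition of unity $\theta_j \in C_c^\infty(B_j; [0,1])$ with $\sum_j \theta_j \equiv 1$ near $\set{E w = 0} \cap \overline{\Omega_{\eta_0}}$ and near the analogous set for $\tilde w$, and passing to the zero extension $\bar\psi \in W^{1,q}(\R^d)$ of $\psi$ (whose trace on the level sets vanishes outside $\overline{\Omega_{\eta_0}}$, cf.\ \cref{lem:trace}, and which satisfies $\norm{\bar\psi}_{W^{1,q}(\R^d)} = \norm{\psi}_{W^{1,q}(\Omega_{\eta_0})}$), the standard parametrization of a $C^1$-graph turns both surface integrals into finite sums
\begin{equation*}
	\int_{\set{E w = 0}} \frac{\psi}{\abs{\nabla(E w)}} \d\HH^{d-1}
	= \sum_{j=1}^{N} \int_{U_j} A_j \, \psi(\hat x_j, g_j(\hat x_j)) \d\hat x_j,
	\qquad
	A_j := \frac{\theta_j(\cdot, g_j)\sqrt{1 + \abs{\nabla g_j}^2}}{\abs{\nabla(E w)(\cdot, g_j)}},
\end{equation*}
and analogously with $\tilde g_j$, $\tilde A_j$, $E\tilde w$ for the second integral. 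By density it is enough to establish the asserted bound for $\psi \in C_c^\infty(\R^d)$.

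The comparison of the $j$-th graph integrals splits as $\int_{U_j} A_j [\psi(\cdot, g_j) - \psi(\cdot, \tilde g_j)] \d\hat x_j + \int_{U_j} [A_j - \tilde A_j] \psi(\cdot, \tilde g_j) \d\hat x_j$. In the second term the weights converge uniformly as $\delta \to 0$ (all ingredients converge uniformly and the denominators remain bounded below by $\nu/4$), while $\psi(\cdot, \tilde g_j)$ is the trace of $\bar\psi$ on the graph of $\tilde g_j$ with $\norm{\psi(\cdot, \tilde g_j)}_{L^q(U_j)} \le C \norm{\bar\psi}_{W^{1,q}(\R^d)}$ for a trace constant uniform in $\tilde g_j$; hence this term is bounded by $C \norm{A_j - \tilde A_j}_{L^\infty(U_j)} \norm{\psi}_{W^{1,q}(\Omega_{\eta_0})} \to 0$. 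The first term is the crux. Since $A_j$ is uniformly bounded and $\psi$ is merely Sobolev, I would use, for smooth $\psi$, the identity $\psi(\hat x_j, g_j(\hat x_j)) - \psi(\hat x_j, \tilde g_j(\hat x_j)) = \int_{I_j(\hat x_j)} \partial_{i_j}\psi(\hat x_j, s) \d s$ over the interval $I_j(\hat x_j)$ between $\tilde g_j(\hat x_j)$ and $g_j(\hat x_j)$, integrate over $U_j$, and apply Fubini and Hölder to bound this term by
\begin{equation*}
	C \, \norm{\nabla\bar\psi}_{L^q(S_j)} \, \lambda(S_j)^{1/q'}
	\le
	C \, \bigl( \norm{g_j - \tilde g_j}_{L^\infty(U_j)} \, \lambda(U_j) \bigr)^{1/q'} \norm{\psi}_{W^{1,q}(\Omega_{\eta_0})},
\end{equation*}
where $S_j$ is the thin slab between the two graphs. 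As $\delta \to 0$ this tends to $0$ uniformly over $\norm{\psi}_{W^{1,q}(\Omega_{\eta_0})} \le 1$; summing over the finitely many patches and then invoking density yields the claim.

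The hard part will be this last term, in which $\psi$ is evaluated on two distinct hypersurfaces: the lack of continuity of $\psi$ must be compensated by the fundamental-theorem-of-calculus estimate over the slab between the graphs together with the uniform trace estimate, and one has to verify that the slab volume is genuinely controlled by $\norm{g_j - \tilde g_j}_{L^\infty}$ with constants uniform across the finitely many patches. Extracting ``$\norm{g_j - \tilde g_j}_{C^1} \to 0$'' from the $C^1$-closeness of $E w$ and $E\tilde w$ via the implicit function theorem is routine bookkeeping but should also be carried out with uniform constants. (The parameter $\eta \in (0, \eta_0)$ plays no role once one argues with the zero extension $\bar\psi$; alternatively it can be used to keep all covering balls $B_j$ inside $\Omega_{\eta_0}$.)
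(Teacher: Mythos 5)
Your proposal follows essentially the same route as the paper's proof: suppress the extension operator, exploit the lower bound $\abs{\nabla(Ew)}\ge\nu$ near the zero set, describe $\set{Ew=0}$ and $\set{E\tilde w=0}$ locally as $C^1$-graphs via the implicit function theorem, convert surface integrals to integrals over the coordinate patches by the area formula, split the difference into a ``$\psi$ on two nearby graphs'' part and a ``difference of weights'' part, bound the former by a fundamental-theorem-of-calculus estimate over the slab between graphs combined with Hölder, and finish with a partition of unity over a finite cover plus density of smooth functions in $W^{1,q}_0$.

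There are only cosmetic deviations. For the weight-difference term you invoke a trace estimate uniform over the family of nearby graphs, whereas the paper avoids any explicit trace bound by applying the fundamental theorem of calculus once more to control $\int_B\abs{\psi(x,\gamma(x))}\d\lambda^{d-1}(x)$ via $\partial_d\psi$ on the slab $B\times I$; both are fine and both exploit the uniform bound $\partial_d w, \partial_d\tilde w\ge\nu/2$. You cover the full compact set $\set{Ew=0}\cap\overline{\Omega_{\eta_0}}$ and rely on the zero extension $\bar\psi$ so that the covering balls may poke outside $\Omega_{\eta_0}$, while the paper covers $\set{Ew=0}\cap\overline{\Omega_\eta}$ (using $\eta<\eta_0$) with balls confined to $\Omega_{\eta_0}$ and adds an extra patch $W_{n+1}=\Omega_{\eta_0}\cap\set{\abs{w}>\tau}$ on which the level sets are empty for small $\delta$; your remark that $\eta$ is dispensable is essentially correct once one works with the extension. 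The one point you rightly flag as needing care — that the implicit-function-theorem data and the trace constants are uniform across the finitely many patches and across all $\tilde w$ with $\norm{w-\tilde w}_{C^1}\le\delta$ — is exactly the quantitative bookkeeping the paper carries out; it poses no obstacle.
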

\begin{proof}
	In this proof, it will be convenient
	to supress the extension operator $E$.
	Note that \cref{thm:C1_extension}
	shows that
	$\norm{E w - E \tilde w}_{\Cb^1(\R^d)}$
	can be bounded
	by a multiple of
	$\norm{w - \tilde w}_{C^1(\bar\Omega)}$.
	That is, we assume that
	$w \in \Cb^1(\R^d)$
	satisfies
	$\nabla w \ne 0$ on $\set{w = 0} \cap \overline{\Omega_{\eta_0}}$
	and prove
	\begin{equation*}
		\abs*{
			\int_{\set{w = 0}} \frac{\psi}{\abs{\nabla w}} \d \HH^{d-1}
			-
			\int_{\set{\tilde w = 0}} \frac{\psi}{\abs{\nabla \tilde w}} \d \HH^{d-1}
		}
		\le
		\varepsilon
		\norm{\psi}_{W^{1,q}(\Omega_{\eta_0})}
	\end{equation*}
	for all $\tilde w \in \Cb^1(\R^d)$ with $\norm{w - \tilde w}_{\Cb^1(\R^d)} \le \delta $
	and
	all $\psi \in W_0^{1,q}(\Omega_{\eta_0})$.

	Since $\overline{\Omega_{\eta_0}}$ is compact,
	$\nabla w$ is uniformly continuous on this set and
	we denote by $\zeta$ its modulus of continuity.
	Let $\eta \in (0, \eta_0)$
	be arbitrary.

	We begin with a local argument.
	Let a point $p \in \set{w = 0} \cap \Omega_{\eta_0}$ with $\partial_d w(p) > 0$
	be given.
	Due to the implicit function theorem,
	there exist
	an open set $B \subset \R^{d-1}$,
	an open interval $I = (a,b) \subset \R$,
	a function $\gamma \in C^1(B; I)$
	and a constant $\nu \in (0,1]$
	such that
	\begin{align*}
		& p \in B \times I
		, \qquad
		\overline{B \times I} \subset \Omega_{\eta_0}
		, \qquad
		\partial_d w(x,y) \ge \nu \quad\forall (x,y) \in B \times I
		\\
		& \set{w = 0} \cap (B \times I) = \set{(x, \gamma(x)) \given x \in B},
		\\ &
		\set{(x,y) \given x \in B, \abs{y - \gamma(x)} < \nu } \subset B \times I
		.
	\end{align*}
	Note that this implies
	$w(x,a) \le -\nu^2$ and $w(x,b) \ge \nu^2$
	for all $x \in B$.
	We define $\delta := \nu^2/2$.
	For an arbitrary $\tilde w \in \Cb^1(\R^d)$
	with
	$\norm{w - \tilde w}_{\Cb^1(\R^d)} \le \delta$,
	we therefore obtain
	$\tilde w(x,a) \le -\delta$ and $w(x,b) \ge \delta$
	for all $x \in B$
	and
	$\partial_d \tilde w(x,y) > \nu/2$ for all $(x,y) \in B \times I$.
	Consequently,
	the intermediate value theorem in cooperation with
	the implicit function theorem implies the existence of
	$\tilde\gamma \in C^1(B; I)$
	such that
	\begin{equation*}
		\set{\tilde w = 0} \cap (B \times I)
		=
		\set{(x, \tilde\gamma(x)) \given x \in B}
		.
	\end{equation*}
	From
	$ \abs{ w(x, \tilde\gamma(x)) } = \abs{ w(x, \tilde\gamma(x)) - \tilde w(x, \tilde\gamma(x)) }$,
	$w(x,\gamma(x)) = 0$,
	and
	$\partial_d w(x,y) \ge \nu$,
	we get
	\begin{equation*}
		\abs{
			\gamma(x) - \tilde\gamma(x)
		}
		\le
		\frac{\norm{w - \tilde w}_{\Cb(\R^d)}}{\nu}
		.
	\end{equation*}
	Now, let
	$\psi \in C_c^1( B \times I )$
	be arbitrary.
	By differentiating $w(x,\gamma(x)) = 0$,
	we obtain
	$-\partial_i w(x,\gamma(x)) = \partial_d w(x,\gamma(x)) \partial_i \gamma(x)$
	for all $i = 1,\ldots, d-1$.
	Thus,
	\begin{equation*}
		\abs{\nabla w(x,\gamma(x))}^2
		=
		\partial_d w(x,\gamma(x))^2 (\abs{\gamma'(x)}^2 + 1).
	\end{equation*}
	From the area formula, we get
	\begin{align*}
		J &:=
		\int_{\set{w = 0}} \frac{\psi}{\abs{\nabla w}} \d\HH^{d-1}
		=
		\int_{\set{w = 0} \cap (B \times I)} \frac{\psi}{\abs{\nabla w}} \d\HH^{d-1}
		\\&=
		\int_B \frac{\psi(x, \gamma(x))}{ \abs{\nabla w(x,\gamma(x)) }} \sqrt{\abs{\gamma'(x)}^2 + 1} \d\lambda^{d-1}(x)
		=
		\int_B \frac{\psi(x, \gamma(x))}{ \partial_d w(x,\gamma(x)) } \d\lambda^{d-1}(x)
		,
	\end{align*}
	where $\lambda^{d-1}$ is the Lebesgue measure on $\R^{d-1}$.
	The same argument shows
	\begin{equation*}
		\tilde J
		:=
		\int_{\set{\tilde w = 0}} \frac{\psi}{\abs{\nabla \tilde w}} \d\HH^{d-1}
		=
		\int_B \frac{\psi(x, \tilde\gamma(x))}{ \partial_d \tilde w(x,\tilde\gamma(x)) } \d\lambda^{d-1}(x)
		.
	\end{equation*}
	Taking the difference gives
	\begin{align*}
		\abs{J - \tilde J}
		&\le
		\int_B \frac{\abs{\psi(x, \gamma(x)) - \psi(x, \tilde\gamma(x))}}{ \partial_d \tilde w(x,\tilde\gamma(x)) } \d\lambda^{d-1}(x)
		\\&\qquad
		+
		\int_B \psi(x, \gamma(x)) \abs*{
			\frac{1}{ \partial_d w(x,\gamma(x)) }
			-
			\frac{1}{ \partial_d \tilde w(x,\tilde\gamma(x)) }
		} \d\lambda^{d-1}(x)
		\\
		&:=
		I_1 + I_2.
	\end{align*}
	For the first contribution, we get
	\begin{align*}
		I_1
		&\le
		\frac2\nu
		\int_B \int_{\gamma(x)}^{\tilde\gamma(x)}\abs{\partial_d \psi(x, t)} \d t \d\lambda^{d-1}(x)
		\\
		&\le
		\frac2\nu
		\int_B \abs{\gamma(x) - \tilde\gamma(x)}^{1/q'} \parens*{\int_{a}^{b}\abs{\partial_d \psi(x, t)}^q \d t}^{1/q} \d\lambda^{d-1}(x)
		\\
		&\le
		\frac2\nu
		\parens*{ \norm{w - \tilde w}_{\Cb(\R^d)} / \nu }^{1/q'}
		\int_B \parens*{\int_{a}^{b}\abs{\partial_d \psi(x, t)}^q \d t}^{1/q} \d\lambda^{d-1}(x)
		\\
		&\le
		\frac{2}{\nu}
		\parens*{ \norm{w - \tilde w}_{\Cb(\R^d)} / \nu }^{1/q'}
		\lambda^{d-1}(B)^{1/q'}
		\norm{ \psi }_{W^{1,q}(B \times I)}
		.
	\end{align*}
	For the second contribution, we start with the estimate
	\begin{align*}
		\abs{
			\partial_d w(x,\gamma(x))
			-
			\partial_d \tilde w(x,\tilde\gamma(x))
		}
		&\le
		\abs{
			\partial_d w(x,\gamma(x))
			-
			\partial_d w(x,\tilde\gamma(x))
		}
		\\&\qquad
		+
		\abs{
			\partial_d w(x,\tilde\gamma(x))
			-
			\partial_d \tilde w(x,\tilde\gamma(x))
		}
		\\
		&\le
		\zeta(\abs{\gamma(x) - \tilde\gamma(x)})
		+
		\norm{w - \tilde w}_{\Cb^1(\R^d)}
		\\&
		\le
		\zeta\parens*{
			\norm{w - \tilde w}_{\Cb(\R^d)} / \nu
		}
		+
		\norm{w - \tilde w}_{\Cb^1(\R^d)}
		.
	\end{align*}
	Thus,
	\begin{align*}
		I_2 &=
		\int_B \psi(x, \gamma(x))
		\frac{
			\abs{
				\partial_d w(x,\gamma(x))
				-
				\partial_d \tilde w(x,\tilde\gamma(x))
			}
		}{
			\partial_d w(x,\gamma(x))
			\partial_d \tilde w(x,\tilde\gamma(x))
		}
		\d\lambda^{d-1}(x)
		\\&
		\le
		\int_B \abs{\psi(x, \gamma(x))}
		\d\lambda^{d-1}(x)
		\frac{
			\zeta\parens*{
				\norm{w - \tilde w}_{\Cb(\R^d)} / \nu
			}
			+
			\norm{w - \tilde w}_{\Cb^1(\R^d)}
		}{\nu^2/2}
		\\
		&\le
		\int_B \int_a^b \abs{\partial_d \psi(x,t)} \d t
		\d\lambda^{d-1}(x)
		\frac{
			\zeta\parens*{
				\norm{w - \tilde w}_{\Cb(\R^d)} / \nu
			}
			+
			\norm{w - \tilde w}_{\Cb^1(\R^d)}
		}{\nu^2/2}
		\\
		&\le
		(b-a)^{q'} \lambda^{d-1}(B)^{q'}
		\norm{\psi}_{W^{1,q}(B \times I)}
		\frac{
			\zeta\parens*{
				\norm{w - \tilde w}_{\Cb(\R^d)} / \nu
			}
			+
			\norm{w - \tilde w}_{\Cb^1(\R^d)}
		}{\nu^2/2}
		.
	\end{align*}
	Since both estimates
	are uniform w.r.t.\ the $W^{1,q}(B \times I)$-norm,
	the density of $C_c^1(B \times I)$ in $W_0^{1,q}(B \times I)$
	implies
	that these estimates also hold for all $\psi \in W_0^{1,q}(B \times I)$.
	Note that the continuity of $J$ and $\tilde J$
	w.r.t.\ $\psi \in W_0^{1,q}(B \times I)$ follows
	from \cref{lem:trace}.

	To summarize,
	for every $p \in \set{w = 0} \cap \Omega_{\eta_0}$ with $\partial_d w(p) > 0$,
	we find an open neighborhood $W_p := B \times I$ of $p$
	and a function $\sigma_p \colon [0,\infty) \to [0,\infty]$
	with $\sigma_p(t) \to 0$ as $t \to 0$
	such that
	\begin{equation*}
		\abs*{
			\int_{\set{w = 0}} \frac{\psi}{\abs{\nabla w}} \d\HH^{d-1}
			-
			\int_{\set{\tilde w = 0}} \frac{\psi}{\abs{\nabla \tilde w}} \d\HH^{d-1}
		}
		\le
		\norm{\psi}_{W^{1,q}(W_p)}
		\sigma_p( \norm{ w - \tilde w }_{\Cb^1(\R^d) } )
	\end{equation*}
	for all
	$\tilde w \in C^1(\bar\Omega)$,
	$\psi \in W_0^{1,q}(W_p)$.
	Since we assumed $\nabla w \ne 0$ on $\set{w = 0} \cap \overline{\Omega_{\eta_0}}$,
	the same argument can be applied to every point
	$p \in \set{w = 0} \cap \overline{\Omega_\eta}$
	(by using rotated coordinate systems).

	The set $\set{w = 0} \cap \overline{\Omega_\eta}$ is compact,
	thus we can find finitely many
	$p_1, \ldots, p_n \in \set{w = 0} \cap \overline{\Omega_\eta}$
	such that the associated neighborhoods
	$W_i := W_{p_i} \subset \Omega_{\eta_0}$ cover $\set{w = 0} \cap \overline{\Omega_\eta}$.

	There is $\tau>0$ such that $|w| \ge 2\tau$ on the compact set $\overline{\Omega_\eta} \setminus \left( \bigcup_{i=1}^n W_i\right)$.
	We further set
	$W_{n + 1} := \Omega_{\eta_0} \cap \set{|w| > \tau}$.
	Note that the open sets $W_1,\ldots,W_{n+1}$ cover $\overline{\Omega_\eta}$.
	Thus, there exist functions
	$\varphi_j \in C_c^1(W_j; [0,1])$, $j = 1,\ldots,n+1$
	such that
	$\sum_{j = 1}^{n+1} \varphi_j = 1$ on $\overline{\Omega_\eta}$.
	Let now $\psi \in W_0^{1,q}(\Omega_{\eta_0})$ be given, which is extended by $0$.
	Consequently,
	\begin{align*}
		I
		&:=
		\abs*{
			\int_{\set{w = 0}} \frac{\psi}{\abs{\nabla w}} \d\HH^{d-1}
			-
			\int_{\set{\tilde w = 0}} \frac{\psi}{\abs{\nabla\tilde w}} \d\HH^{d-1}
		}
		\\
		&\le
		\sum_{j = 1}^{n + 1}
		\abs*{
			\int_{\set{w = 0}} \frac{\varphi_j \psi}{\abs{\nabla w}} \d\HH^{d-1}
			-
			\int_{\set{\tilde w = 0}} \frac{\varphi_j \psi}{\abs{\nabla\tilde w}} \d\HH^{d-1}
		}
		.
	\end{align*}
	On the support of $\varphi_{n+1}$,
	we have $\abs{w} > \tau$,
	so that the term with $j = n + 1$ vanishes if
	$\norm{w - \tilde w}_{\Cb(\R^d)}$ is smaller than $\tau$.
	The terms with $j \in \set{1,\ldots,n}$ can be handled by the local construction from above
	applied to $\varphi_j \psi \in W_0^{1,q}(W_j)$.
	This shows
	\begin{align*}
		I
		&\le
		\sum_{j = 1}^{n}
		\norm{\varphi_j \psi}_{W^{1,q}(W_p)}
		\sigma_{p_n}( \norm{ w - \tilde w }_{\Cb^1(\R^d) } )
		\le
		C
		\norm{\psi}_{W^{1,q}(\Omega_{\eta_0})}
		\sum_{j = 1}^{n}
		\sigma_{p_n}( \norm{ w - \tilde w }_{\Cb^1(\R^d) } )
		\\&
		\le
		\varepsilon
		\norm{\psi}_{W^{1,q}(\Omega_{\eta_0})}
	\end{align*}
	for
	$\norm{w - \tilde w}_{\Cb(\R^d)}$ small enough.
	This shows the claim.
\end{proof}

It remains to prove an estimate similar to the previous lemma
but only on the set $\bar\Omega$.
This will be established by the combination of \cref{lem:cty_bdry,lem:cty_for_extensions}.

\begin{theorem}
	\label{thm:cty_derivative2}
	Let $w \in C^1(\bar\Omega)$ with $\nabla w \ne 0$ on $\set{w = 0}$ be given
	such that \eqref{eq:asm_bdry} holds.
	Then, for every $\varepsilon > 0$, there exist $\delta > 0$
	such that
	\begin{equation*}
		\abs*{
			\int_{\set{w = 0}} \frac{\psi}{\abs{\nabla w}} \d\HH^{d-1}
			-
			\int_{\set{\tilde w = 0}} \frac{\psi}{\abs{\nabla\tilde w}} \d\HH^{d-1}
		}
		\le
		\varepsilon
		\norm{\psi}_{W^{1,q}(\Omega)}
		\qquad
		\forall \psi \in W^{1,q}(\Omega)
	\end{equation*}
	holds
	for every $\tilde w \in C^1(\bar\Omega)$
	with $\norm{w - \tilde w}_{C^1(\bar\Omega)} \le \delta$.
	Thus,
	the function $\sign' \colon C^1(\bar\Omega) \to \LL(C^1(\bar\Omega), W^{1,q}(\Omega)\dualspace)$
	is continuous at $w$.
\end{theorem}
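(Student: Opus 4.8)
The plan is to first establish the displayed inequality and then read off the continuity of $\sign'$ as a direct consequence. For the latter, note that by \eqref{eq:derivative_signum} one has $\dual{\sign'(w)z}{\psi}_{W^{1,q}(\Omega)}=2\int_{\set{w=0}}\frac{z\psi}{\abs{\nabla w}}\d\HH^{d-1}$, and the product rule provides a constant $C_0$ with $\norm{z\psi}_{W^{1,q}(\Omega)}\le C_0\norm{z}_{C^1(\bar\Omega)}\norm{\psi}_{W^{1,q}(\Omega)}$ for all $z\in C^1(\bar\Omega)$ and $\psi\in W^{1,q}(\Omega)$. Applying the displayed inequality with $\psi$ replaced by $z\psi$ therefore gives $\norm{\sign'(w)-\sign'(\tilde w)}_{\LL(C^1(\bar\Omega),W^{1,q}(\Omega)\dualspace)}\le 2C_0\varepsilon$ whenever $\norm{w-\tilde w}_{C^1(\bar\Omega)}\le\delta$, which is the asserted continuity since $\varepsilon>0$ is arbitrary (that $\sign'(w)$ itself lies in $\LL(C^1(\bar\Omega),W^{1,q}(\Omega)\dualspace)$ follows from the trace estimate in \cref{lem:trace} and $\HH^{d-1}(\set{w=0})<\infty$).

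To prove the displayed inequality, fix $\varepsilon>0$. The first step is to pin down the geometry on a fixed neighbourhood: by compactness of $\set{w=0}$ and continuity of $\nabla(Ew)$ there is $\eta_0>0$ with $\nabla(Ew)\ne0$ on $\set{Ew=0}\cap\overline{\Omega_{\eta_0}}$, and --- arguing exactly as in \cref{lem:stability_of_reg_condition} --- for $\norm{w-\tilde w}_{C^1(\bar\Omega)}$ small the same holds for $E\tilde w$; hence for such $\tilde w$ the trace operators of \cref{lem:trace} exist, the integral $\int_{\set{\tilde w=0}}\frac{\psi}{\abs{\nabla\tilde w}}\d\HH^{d-1}$ is meaningful, and \cref{lem:cty_for_extensions} applies with $w$ replaced by $\tilde w$. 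The second step is to produce a thin boundary layer: fix $\varepsilon_1>0$ as a small multiple of $\varepsilon$ divided by the operator norm $C_{E_W}$ of $E_W$, and use \cref{lem:cty_bdry} to find $\rho\in(0,\eta_0)$ with $\int_{\set{Ew=0}\cap\partial_\rho\Omega}\frac{\abs{\phi}}{\abs{\nabla(Ew)}}\d\HH^{d-1}\le\varepsilon_1\norm{\phi}_{W^{1,q}(\R^d)}$ for all $\phi\in W^{1,q}(\R^d)$. The third step is to split off the boundary layer: pick $\chi\in C_c^\infty(\partial_\rho\Omega;[0,1])$ with $\chi\equiv1$ on $\partial_{\rho/2}\Omega$, and write, using the trace identity of \cref{lem:trace} on $\set{w=0},\set{\tilde w=0}\subset\bar\Omega$,
\[
	\int_{\set{w=0}}\frac{\psi}{\abs{\nabla w}}\d\HH^{d-1}-\int_{\set{\tilde w=0}}\frac{\psi}{\abs{\nabla\tilde w}}\d\HH^{d-1}=A+B,
\]
where $A$ (resp.\ $B$) is the same difference with $\psi$ replaced by $(1-\chi)E_W\psi$ (resp.\ $\chi E_W\psi$). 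On the zero sets $1-\chi$ is supported in the compact set $\bar\Omega\setminus\partial_{\rho/2}\Omega\subset\Omega$; multiplying in addition by a cutoff equal to $1$ there and compactly supported in $\Omega$, and using $Ew=w$, $E\tilde w=\tilde w$ on $\Omega$ together with \eqref{eq:asm_bdry}, one rewrites $A$ as a difference of integrals over $\set{Ew=0}$ and $\set{E\tilde w=0}$ tested against an element of $W_0^{1,q}(\Omega_{\eta_0})$ whose norm is a (cutoff-dependent) multiple of $\norm{\psi}_{W^{1,q}(\Omega)}$, and \cref{lem:cty_for_extensions} bounds it.

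The term $B$ is the crux. Using that the integrands are nonnegative and passing to extensions, the $w$-part of $B$ is dominated by $\int_{\set{Ew=0}\cap\partial_\rho\Omega}\frac{\abs{E_W\psi}}{\abs{\nabla(Ew)}}\d\HH^{d-1}\le\varepsilon_1 C_{E_W}\norm{\psi}_{W^{1,q}(\Omega)}$ by the choice of $\rho$ --- here \cref{lem:cty_bdry} is applied to $\abs{E_W\psi}\in W^{1,q}(\R^d)$ \emph{directly}, without any auxiliary cutoff, which is exactly why the only constant that appears is $C_{E_W}$. For the $\tilde w$-part of $B$ this direct argument is unavailable, because \cref{lem:cty_bdry} and the assumption \eqref{eq:asm_bdry} concern $w$ alone and are not stable under perturbation; instead I would dominate it by $\int_{\set{E\tilde w=0}}\frac{\chi\abs{E_W\psi}}{\abs{\nabla(E\tilde w)}}\d\HH^{d-1}$, add and subtract the corresponding $w$-integral, bound the difference by \cref{lem:cty_for_extensions} applied to $\chi\abs{E_W\psi}\in W_0^{1,q}(\Omega_{\eta_0})$, and bound the remaining $w$-integral again by \cref{lem:cty_bdry}. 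Collecting all contributions, the terms from \cref{lem:cty_bdry} are each bounded by a fixed multiple of $\varepsilon_1 C_{E_W}\norm{\psi}_{W^{1,q}(\Omega)}$, while the terms from \cref{lem:cty_for_extensions} are of the form (precision parameter of \cref{lem:cty_for_extensions})$\times$(constant depending on $\rho$ and the cutoffs)$\times\norm{\psi}_{W^{1,q}(\Omega)}$. There is no circularity: the order of the choices is first $\varepsilon$, then $\eta_0$, then $\varepsilon_1$, then $\rho$, then the cutoffs (hence their constants), then the precision parameter of \cref{lem:cty_for_extensions}, and finally $\delta$ --- the applications of \cref{lem:cty_bdry} carry only the fixed constant $C_{E_W}$ and are controlled by the early choice of $\varepsilon_1$, whereas the precision in \cref{lem:cty_for_extensions} (and thereby $\delta$) is chosen last, once all cutoffs are fixed. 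I expect the main obstacle to be precisely this treatment of the $\tilde w$-side boundary layer in $B$: the nondegeneracy $\nabla w\ne0$ on $\set{w=0}$ does transfer to nearby $\tilde w$, but the measure condition \eqref{eq:asm_bdry} does not, so that term cannot be controlled by a boundary estimate and must instead be routed through the difference quantity of \cref{lem:cty_for_extensions}; a secondary, purely bookkeeping, difficulty is the coordinated choice of the cutoffs and the nested radii so that all the support conditions and the constant-tracking above are consistent.
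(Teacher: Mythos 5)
Your proposal is correct and follows essentially the same route as the paper's own proof: both decompose $\psi$ into an interior part and a boundary-layer part via smooth cutoffs, bound the interior difference with \cref{lem:cty_for_extensions} after multiplying by an auxiliary cutoff supported in $\Omega$, bound the $w$-side boundary-layer integral directly with \cref{lem:cty_bdry}, and handle the $\tilde w$-side boundary-layer integral by adding and subtracting the corresponding $w$-integral (bounding the difference with \cref{lem:cty_for_extensions} and the remainder with \cref{lem:cty_bdry}), with $\delta$ chosen last once all cutoff constants are fixed. The only cosmetic difference is that you use a single boundary cutoff $\chi$ applied to $E_W\psi$ where the paper uses two cutoffs $\varphi_1,\varphi_2$ with $1-\varphi_1\le\varphi_2$ to accomplish the same domination; this does not affect the substance of the argument.
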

\begin{proof}
	Let $\varepsilon > 0$ be given,
	choose $\eta_0 > 0$ as in \cref{lem:cty_for_extensions}
	and $\eta \in (0, \eta_0)$ according to \cref{lem:cty_bdry}.
	Next, we choose functions
	$\varphi_1 \in C_c^\infty(\Omega; [0,1])$
	and
	$\varphi_2 \in C_c^\infty(\partial_\eta \Omega; [0,1])$
	as in the proof of \cref{lem:differentiability_signum},
	i.e.,
	such that
	\begin{equation*}
		\varphi_1 = 1 \text{ on } \Omega \setminus \partial_{\eta/2}\Omega
		\quad\text{and}\quad
		\varphi_2 = 1 \text{ on } \Omega \cap \set{ \varphi_1 < 1 }
		.
	\end{equation*}
	This construction implies $1-\varphi_1 \le \varphi_2$ on $\Omega \cap \partial_\eta \Omega$.
	Finally,
	we can utilize \cref{lem:cty_for_extensions}
	with $\varepsilon$ replaced by
	\begin{equation*}
		\hat\varepsilon
		:=
		\frac{\varepsilon}{
			\norm{ \varphi_1 }_{C^1(\bar\Omega) }
			+
			\norm{\varphi_2}_{C^1(\overline{\partial_\eta \Omega})}
		}
	\end{equation*}
	to choose $\delta > 0$.

	Now, let $\tilde w \in C^1(\bar\Omega)$ with $\norm{w - \tilde w}_{C^1(\bar\Omega)} \le \delta$ be given.
	Further, let $\psi \in W^{1,q}(\Omega)$ be arbitrary.
	We have
	\begin{align*}
		I
		&:=
		\abs*{
			\int_{\set{w = 0}} \frac{\psi}{\abs{\nabla w}} \d\HH^{d-1}
			-
			\int_{\set{\tilde w = 0}} \frac{\psi}{\abs{\nabla\tilde w}} \d\HH^{d-1}
		}
		\\
		&\le
		I_1 + I_2 + I_3
		:=
		\abs*{
			\int_{\set{w = 0}} \frac{\varphi_1 \psi}{\abs{\nabla w}} \d\HH^{d-1}
			-
			\int_{\set{\tilde w = 0}} \frac{\varphi_1 \psi}{\abs{\nabla\tilde w}} \d\HH^{d-1}
		}
		\\&\qquad
		+
		\abs*{
			\int_{\set{w = 0}} \frac{(1 - \varphi_1) \psi}{\abs{\nabla w}} \d\HH^{d-1}
		}
		+
		\abs*{
			\int_{\set{\tilde w = 0}} \frac{(1 - \varphi_1) \psi}{\abs{\nabla\tilde w}} \d\HH^{d-1}
		}
		.
	\end{align*}
	Since $\varphi_1$ vanishes outside of $\Omega$,
	we can replace $w$, $\tilde w$, and $\psi$ by $E w$, $E \tilde w$, and $E_W \psi$
	respectively,
	in
	$I_1$.
	Consequently,
	\cref{lem:cty_for_extensions} provides us with the bound
	\begin{equation*}
		I_1
		\le
		\hat\varepsilon \norm{ \varphi_1 E_W \psi }_{W^{1,q}(\Omega_{\eta_0}) }
		\le
		\hat\varepsilon C \norm{ \varphi_1 }_{C^1(\bar\Omega)} \norm{ \psi }_{W^{1,q}(\Omega) }
		\le
		\varepsilon C \norm{ \psi }_{W^{1,q}(\Omega) }
		.
	\end{equation*}
	From \cref{lem:cty_bdry}, we get
	\begin{equation*}
		I_2
		\le
		\int_{\set{E w = 0} \cap \partial_\eta \Omega} \frac{\abs{E_W \psi} }{\abs{\nabla w}} \d\HH^{d-1}
		\le
		\varepsilon \norm{E_W \psi}_{W^{1,q}(\R^d)}
		\le
		\varepsilon C \norm{\psi}_{W^{1,q}(\Omega)}
		,
	\end{equation*}
	where we used that $1-\varphi$ vanishes on $\Omega \setminus \partial_\eta \Omega$ and $0 \le \varphi_1 \le 1$.
	It remains to estimate $I_3$.
	From
	$1 - \varphi_1 = 0$ on $\Omega \setminus \partial_\eta \Omega$
	and
	$1 - \varphi_1 \le \varphi_2$ on $\partial_\eta \Omega \cap \Omega$
	we get
	\begin{align*}
		I_3
		=
		\abs*{
			\int_{\set{\tilde w = 0}} \frac{(1 - \varphi_1) \psi}{\abs{\nabla\tilde w}} \d\HH^{d-1}
		}
		&\le
		\int_{\set{E \tilde w = 0} } \frac{\varphi_2 \abs{E_W \psi}}{\abs{\nabla( E \tilde w)}} \d\HH^{d-1}
		.
	\end{align*}
	Since $\varphi_2$
	is supported on $\partial_\eta \Omega$, we continue with
	\begin{align*}
		I_3
		&
		\le
		\abs*{
			\int_{\set{E \tilde w = 0} } \frac{\varphi_2 \abs{E_W \psi}}{\abs{\nabla( E \tilde w)}} \d\HH^{d-1}
			-
			\int_{\set{E w = 0} } \frac{\varphi_2 \abs{E_W \psi}}{\abs{\nabla( E w)}} \d\HH^{d-1}
		}
		+
		\int_{\set{E w = 0} } \frac{\varphi_2 \abs{E_W \psi}}{\abs{\nabla( E w)}} \d\HH^{d-1}
		\\
		&=
		\hat\varepsilon \norm{\varphi_2 \abs{E_W \psi}}_{W^{1,q}(\Omega_{\eta_0})}
		+
		\int_{\set{E w = 0} \cap \partial_\eta \Omega} \frac{\abs{E_W \psi}}{\abs{\nabla( E w)}} \d\HH^{d-1}
		\\&
		\le
		\hat\varepsilon C \norm{\varphi_2}_{C^1(\overline{\partial_\eta \Omega})} \norm{E_W \psi}_{W^{1,q}(\Omega_{\eta_0})}
		+
		\varepsilon \norm{ E_W \psi }_{W^{1,q}(\R^d)}
		\\&
		\le
		C \varepsilon
		\norm{\psi}_{W^{1,q}(\Omega)}
		,
	\end{align*}
	where we again used \cref{lem:cty_bdry,lem:cty_for_extensions}
	and $0 \le \varphi_2 \le 1$.
	Consequently,
	\(
		I_3
		\le
		C \varepsilon \norm{\psi}_{W^{1,q}(\Omega)}
		.
	\)

	Collecting the above inequalities
	shows
	\begin{equation*}
		I
		\le
		C
		\varepsilon \norm{\psi}_{W^{1,q}(\Omega)}
		.
	\end{equation*}
	Since $\varepsilon > 0$ was arbitrary,
	this shows the claim.
\end{proof}

\begin{lemma}
	\label{lem:continuity_sign_in_L1}
	Let $w\in L^1(\Omega)$ such that $\lambda( \set{w=0} ) = 0$.
	Then the map $\sign: L^1(\Omega) \to L^1(\Omega)$ is continuous at $w$.
\end{lemma}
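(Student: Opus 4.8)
The plan is to reduce the statement to an elementary measure-theoretic $\varepsilon$–$\delta$ estimate, with no need for subsequence extraction or heavy machinery. Two preliminary observations are worth recording first. Since the domain is bounded (the standing assumption), $\lambda(\Omega) < \infty$, and this is exactly what makes $\sign$ map $L^1(\Omega)$ into $L^1(\Omega)$ in the first place, because $\norm{\sign(v)}_{L^1(\Omega)} = \lambda(\set{v \ne 0}) \le \lambda(\Omega)$; this finiteness will also be used in the estimate. Second, the hypothesis $\lambda(\set{w = 0}) = 0$ is what lets us disregard the set where $w$ vanishes: there $\sign(w) = 0$ while $\sign(\tilde w)$ is completely uncontrolled, but that set carries no Lebesgue mass and hence contributes nothing to the $L^1$-distance.

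Fix $\varepsilon > 0$. For a parameter $\rho > 0$ to be chosen, I would split $\Omega$, up to the null set $\set{w = 0}$, into $A_\rho := \set{\abs{w} \ge \rho}$ and $B_\rho := \set{0 < \abs{w} < \rho}$, and bound $\norm{\sign(w) - \sign(\tilde w)}_{L^1(\Omega)}$ over the two pieces separately. On $A_\rho$ the key pointwise fact is that if $\abs{\tilde w(x) - w(x)} < \rho$ and $\abs{w(x)} \ge \rho$, then $w(x)$ and $\tilde w(x)$ have the same sign; contrapositively, $\sign(w)$ and $\sign(\tilde w)$ can differ on $A_\rho$ only inside $\set{\abs{\tilde w - w} \ge \rho}$, whose measure is at most $\rho^{-1}\norm{\tilde w - w}_{L^1(\Omega)}$ by Markov's inequality, so $\int_{A_\rho} \abs{\sign(w) - \sign(\tilde w)} \d\lambda \le 2\rho^{-1}\norm{\tilde w - w}_{L^1(\Omega)}$. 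On $B_\rho$ I would only use the crude bound $\int_{B_\rho} \abs{\sign(w) - \sign(\tilde w)} \d\lambda \le 2\lambda(B_\rho)$, together with the observation that $B_\rho \downarrow \emptyset$ as $\rho \downarrow 0$, whence $\lambda(B_\rho) \to 0$ by continuity from above of the finite measure $\lambda$. Altogether this gives $\norm{\sign(w) - \sign(\tilde w)}_{L^1(\Omega)} \le 2\rho^{-1}\norm{\tilde w - w}_{L^1(\Omega)} + 2\lambda(B_\rho)$ for all $\rho > 0$ and all $\tilde w \in L^1(\Omega)$.

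It then remains to choose the parameters in the correct order: first pick $\rho > 0$ small enough that $2\lambda(B_\rho) < \varepsilon/2$, and then set $\delta := \varepsilon\rho/4$, so that $\norm{w - \tilde w}_{L^1(\Omega)} \le \delta$ forces $\norm{\sign(w) - \sign(\tilde w)}_{L^1(\Omega)} < \varepsilon$, which is precisely continuity of $\sign$ at $w$. I do not expect any genuine obstacle in this argument; the only points requiring care are the order of the quantifiers ($\rho$ must be fixed before $\delta$) and the explicit reliance on $\lambda(\Omega) < \infty$, which is why an ambient boundedness assumption cannot be dispensed with — on an unbounded domain $\sign$ need not even take values in $L^1$.
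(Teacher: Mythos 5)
Your proof is correct, and it takes a genuinely different route from the paper's. The paper argues by sequential continuity: take $w_k \to w$ in $L^1(\Omega)$, pass to a subsequence converging $\lambda$-a.e., observe that $\sign$ is continuous at every point of $\R \setminus \set{0}$ so that $\sign(w_k) \to \sign(w)$ a.e.\ (using $\lambda(\set{w=0}) = 0$), conclude $L^1$-convergence by dominated convergence (with the constant dominant $1 \in L^1(\Omega)$, again using $\lambda(\Omega) < \infty$), and finally upgrade from subsequences to the full sequence via the standard subsequence--subsequence argument. You instead give a direct $\varepsilon$--$\delta$ estimate: split $\Omega$ (modulo the null set $\set{w=0}$) into $\set{\abs{w} \ge \rho}$, controlled by Markov's inequality applied to $\tilde w - w$, and $\set{0 < \abs{w} < \rho}$, controlled by continuity from above of $\lambda$. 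Both proofs use the same two hypotheses in the same roles, but yours is quantitative, yielding the explicit bound $\norm{\sign(w) - \sign(\tilde w)}_{L^1} \le 2\rho^{-1}\norm{w - \tilde w}_{L^1} + 2\lambda(\set{0 < \abs{w} < \rho})$ and avoiding any subsequence extraction, at the cost of a slightly longer write-up; the paper's version is terser but leans on the reader already knowing the subsequence--subsequence trick and the dominated-convergence template. Either is a perfectly adequate proof for the lemma as used downstream.
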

\begin{proof}
Let $(w_k)$ be given with $w_k \to w$ in $L^1(\Omega)$ and pointwise $\lambda$-almost everywhere.
Due to $\lambda( \set{w=0} ) = 0$, we have $\sign(w_k(x)) \to \sign(w(x))$ for $\lambda$-almost all $x\in \Omega$.
The claim follows by dominated convergence and a subsequence-subsequence argument.
\end{proof}

By combining \cref{thm:cty_derivative2}
with the weak differentiability result from \cref{lem:differentiability_signum},
we obtain the continuous differentiability.
\begin{theorem}
	\label{thm:differentiability_signum}
	We assume that $\HH^{d-1}(\partial\Omega) < \infty$.
	Let $w \in C^1(\bar\Omega)$ with $\nabla w \ne 0$ on $\set{w = 0}$ be given
	such that \eqref{eq:asm_bdry} holds.
	Then, for every $q > 1$,
	the function
	$\sign \colon C^1(\bar\Omega) \to W^{1,q}(\Omega)\dualspace$
	is
	Fréchet differentiable at $w$
	with derivative
	$\sign'(w) \in \LL( C^1(\bar\Omega) , W^{1,q}(\Omega)\dualspace )$.
\end{theorem}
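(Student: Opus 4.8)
The plan is to upgrade the weak (Gâteaux) differentiability of \cref{lem:differentiability_signum} to Fréchet differentiability by a mean-value argument along line segments through $w$, exploiting the norm continuity of $\sign'$ at $w$ established in \cref{thm:cty_derivative2}. This is the textbook ``Gâteaux $+$ continuous derivative $\Rightarrow$ Fréchet'' scheme, but with one genuine wrinkle: \cref{lem:differentiability_signum} is available at a point $v$ only when $v$ satisfies the boundary condition \eqref{eq:asm_bdry}, and along a segment $s \mapsto w + sz$ this may fail for certain parameters $s$. The new hypothesis $\HH^{d-1}(\partial\Omega) < \infty$ enters precisely to show that the set of such bad parameters is negligible.

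First I would record the routine preliminaries. Since $\Omega$ is bounded there is a continuous embedding $L^\infty(\Omega) \embeds W^{1,q}(\Omega)\dualspace$, so $\sign$ is a well-defined map $C^1(\bar\Omega) \to W^{1,q}(\Omega)\dualspace$, and $\sign'(w) \in \LL(C^1(\bar\Omega), W^{1,q}(\Omega)\dualspace)$ by the trace bound of \cref{lem:trace} (this is already implicit in \cref{thm:cty_derivative2}). Fix $\varepsilon > 0$. Combining \cref{lem:stability_of_reg_condition} with the concluding statement of \cref{thm:cty_derivative2}, choose $\delta > 0$ so that every $\tilde w \in C^1(\bar\Omega)$ with $\norm{w - \tilde w}_{C^1(\bar\Omega)} \le \delta$ satisfies $\nabla\tilde w \ne 0$ on $\set{\tilde w = 0}$ and $\norm{\sign'(\tilde w) - \sign'(w)}_{\LL(C^1(\bar\Omega),\,W^{1,q}(\Omega)\dualspace)} \le \varepsilon$. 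Finally, since $\Omega$ is a $(1,q)$-extension domain (\cref{asm:standing}), $C(\bar\Omega) \cap W^{1,q}(\Omega)$ is dense in $W^{1,q}(\Omega)$ (extend and mollify), so the dual norm of the remainder $\sign(w+z) - \sign(w) - \sign'(w)z$ may be computed by testing only against $\psi \in C(\bar\Omega) \cap W^{1,q}(\Omega)$ with $\norm{\psi}_{W^{1,q}(\Omega)} \le 1$.

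The heart of the argument is the following claim for a fixed direction $z \in C^1(\bar\Omega)$ with $0 < \norm{z}_{C^1(\bar\Omega)} \le \delta$: the set $N_z := \set{ s \in [0,1] \given \HH^{d-1}(\set{w + sz = 0}\cap\partial\Omega) > 0 }$ is at most countable. Indeed, $\set{w+sz=0}\cap\set{z=0} \subseteq \set{w=0}$ meets $\partial\Omega$ in an $\HH^{d-1}$-null set by \eqref{eq:asm_bdry}, hence $\HH^{d-1}(\set{w+sz=0}\cap\partial\Omega) = \HH^{d-1}(\set{w+sz=0}\cap\set{z\ne0}\cap\partial\Omega)$; the sets $\set{w+sz=0}\cap\set{z\ne0}\cap\partial\Omega$ are pairwise disjoint in $s$ (on $\set{z\ne0}$ the equation $w+sz=0$ determines $s$), and a pairwise disjoint family of subsets of the finite-measure set $\partial\Omega$ contains only countably many sets of positive measure. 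Consequently, for $s \in [0,1]\setminus N_z$ both $\nabla(w+sz)\ne0$ on $\set{w+sz=0}$ and \eqref{eq:asm_bdry} hold, so \cref{lem:differentiability_signum} applies at $w+sz$.

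With this in hand I would fix $\psi$ as above and study $g(s) := \dual{\sign(w+sz) - \sign(w) - s\,\sign'(w)z}{\psi}_{W^{1,q}(\Omega)}$ on $[0,1]$. Since $\nabla(w+sz)\ne0$ on $\set{w+sz=0}$ for all $s\in[0,1]$, Stampacchia's lemma gives $\lambda(\set{w+sz=0})=0$, so $s\mapsto\int_\Omega\sign(w+sz)\psi\d\lambda$ is continuous by \cref{lem:continuity_sign_in_L1} (together with $\psi\in L^\infty(\Omega)$); hence $g$ is continuous on $[0,1]$. For $s\in[0,1]\setminus N_z$, \cref{lem:differentiability_signum} at $w+sz$ in direction $z$ shows that $g$ is differentiable at $s$ with $g'(s) = \dual{(\sign'(w+sz) - \sign'(w))z}{\psi}_{W^{1,q}(\Omega)}$, whence $\abs{g'(s)} \le \norm{\sign'(w+sz) - \sign'(w)}_{\LL}\norm{z}_{C^1(\bar\Omega)} \le \varepsilon\norm{z}_{C^1(\bar\Omega)}$. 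The mean value inequality, which remains valid when differentiability fails on a countable set, then gives $\abs{g(1)-g(0)} = \abs{g(1)} \le \varepsilon\norm{z}_{C^1(\bar\Omega)}$, and taking the supremum over admissible $\psi$ yields $\norm{\sign(w+z)-\sign(w)-\sign'(w)z}_{W^{1,q}(\Omega)\dualspace} \le \varepsilon\norm{z}_{C^1(\bar\Omega)}$ for all $z$ with $\norm{z}_{C^1(\bar\Omega)}\le\delta$; since $\varepsilon>0$ was arbitrary, $\sign$ is Fréchet differentiable at $w$ with derivative $\sign'(w)$. The main obstacle — and the step I would be most careful about — is exactly the treatment of the parameters where \eqref{eq:asm_bdry} fails: establishing the countability of $N_z$ (this is the one place where $\HH^{d-1}(\partial\Omega)<\infty$ is indispensable) and correctly invoking the version of the mean value inequality tolerating a countable exceptional set; everything else is bookkeeping around the two earlier results.
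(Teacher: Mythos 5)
Your proof is correct and follows essentially the same strategy as the paper: fix a continuous test function $\psi$, parametrize along the segment $s \mapsto w + sz$, show via $\HH^{d-1}(\partial\Omega) < \infty$ and the disjointness of the level sets that the boundary condition \eqref{eq:asm_bdry} fails for at most countably many $s$, and then integrate the pointwise Gâteaux derivative from \cref{lem:differentiability_signum} using the norm continuity of $\sign'$ from \cref{thm:cty_derivative2}. The only cosmetic differences are that you intersect with $\set{z \ne 0}$ where the paper intersects with $\set{w \ne 0}$ (both give the required disjointness), and you invoke the mean value inequality tolerating a countable exceptional set instead of the paper's Lebesgue FTC in the form of Koliha, which spares you the paper's explicit check that $t \mapsto \Phi'(t)$ is measurable.
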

\begin{proof}
	From \cref{lem:differentiability_signum}
	we get that
	for fixed $\psi \in C(\bar\Omega) \cap W^{1,q}(\Omega)$,
	the scalar-valued function
	$v \mapsto \dual{\sign(v)}{\psi}_{W^{1,q}(\Omega)}$
	is Gâteaux differentiable
	at points $v \in C^1(\bar\Omega)$
	satisfying
	$\nabla v \ne 0$ on $\set{v = 0}$
	and
	$\HH^{d-1}(\set{v = 0} \cap \partial\Omega) = 0$.
	Both conditions are satisfied at $v = w$
	and,
	due to \cref{lem:stability_of_reg_condition}, the first condition is stable w.r.t.\ $C^1(\bar\Omega)$.
	Next, we fix a small $z \in C^1(\bar\Omega)$,
	such that $\nabla (w + t z) \ne 0$ on $\set{w + t z = 0}$ for all $t \in [0,1]$.
	Then, the family
	$\seq{ \set{w + t z = 0} \cap \set{w \ne 0} \cap \partial\Omega }_{t \in [0,1]}$
	consists of disjoint and measurable subsets of $\partial\Omega$.
	Due to $\HH^{d-1}(\partial\Omega) < \infty$ and $\HH^{d-1}(\set{w = 0} \cap \partial\Omega) = 0$,
	the condition
	$\HH^{d-1}(\set{w + t z = 0} \cap \partial\Omega) = 0$
	is violated for at most countably many values of $t \in [0,1]$.
	Consequently, the function
	$\Phi \colon [0,1] \to \R$,
	\begin{equation*}
		\Phi(t)
		:=
		\dual{\sign(w + t z)}{\psi}_{W^{1,q}(\Omega)}
	\end{equation*}
	is differentiable outside of a countable subset of $[0,1]$
	and  continuous on $[0,1]$ by \cref{lem:continuity_sign_in_L1}.
	Moreover, the derivative
	\begin{equation*}
		\Phi'(t)
		=
		\dual{\sign'(w + t z) z}{\psi}_{W^{1,q}(\Omega)}
	\end{equation*}
	is bounded w.r.t.\ $t \in [0,1]$, which follows from  \cref{thm:cty_derivative2}.
	In addition, $t \mapsto \Phi'(t)$ is
	measurable
	as the pointwise a.e.\@ limit of the difference quotients $(\Phi(t + \frac1n) - \Phi(t)) / n^{-1}$.
	Consequently, $\Phi'$ is Lebesgue integrable.

	Therefore, we can apply the fundamental theorem of calculus in the formulation of
	\cite[Theorem~1]{Koliha2006},
	i.e.,
	\begin{align*}
		\dual{ \sign(w + z ) - \sign(w)}{\psi}_{W^{1,q}(\Omega)}
		&=
		\Phi(1) - \Phi(0)
		=
		\int_0^1 \Phi'(t) \d t
		\\&
		=
		\int_0^1
		\dual{ \sign'(w + t z) z}{\psi}_{W^{1,q}(\Omega)}
		\d t
		.
	\end{align*}
	For a given $\varepsilon > 0$,
	we choose $\delta \in (0,\rho)$ according to
	the continuity of the derivative from \cref{thm:cty_derivative2}.
	Thus, $\norm{z}_{C^1(\bar\Omega)} \le \delta$ implies
	\begin{align*}
		\MoveEqLeft
		\dual{ \sign(w + z ) - \sign(w) - \sign'(w) z}{\psi}_{W^{1,q}(\Omega)}
		\\
		&=
		\int_0^1 \dual{ \sign'(w + t z) z}{\psi}_{W^{1,q}(\Omega)} \d t
		-
		\dual{ \sign'(w) z}{\psi}_{W^{1,q}(\Omega)}
		\\
		&=
		\int_0^1
		\dual*{
			\sign'(w + t z) z - \sign'(w) z
		}{\psi}_{W^{1,q}(\Omega)}
		\d t
		\\
		&=
		2
		\int_0^1
		\parens*{
			\int_{\set{w + t z} = 0} \frac{z \psi}{\abs{\nabla (w + t z)}} \d\HH^{d-1}
			-
			\int_{\set{w = 0}} \frac{z \psi}{\abs{\nabla w}} \d\HH^{d-1}
		}
		\d t
		\\
		&\le
		2
		\varepsilon \norm{z \psi}_{W^{1,q}(\Omega)}
		\le
		C \varepsilon \norm{z}_{C^1(\bar\Omega)} \norm{\psi}_{W^{1,q}(\Omega)}
	\end{align*}
	for all $\psi \in C(\bar\Omega) \cap W^{1,q}(\Omega)$.
	The estimate is uniform w.r.t.\ the $W^{1,q}(\Omega)$-norm
	and
	$C(\bar\Omega) \cap W^{1,q}(\Omega)$
	is dense in
	$W^{1,q}(\Omega)$,
	since $\Omega$ is assumed to be an $(1,q)$-extension domain.
	Consequently,
	the same estimate holds for all $\psi \in W^{1,q}(\Omega)$.
	This finishes the proof.
\end{proof}

We give to counterexamples which shed some light on the role of
the condition \eqref{eq:asm_bdry}.
\begin{example}[Failure of differentiability \texorpdfstring{in absence of \eqref{eq:asm_bdry}}{}]
	\label{ex:not_differentiable}
	Let us consider $\Omega = (0,1)$ and
	let $w \in C^1(\bar\Omega)$ be given by $w(x) = x$.
	It is clear that $\nabla w \ne 0$ on $\set{w = 0}$,
	but \eqref{eq:asm_bdry} is violated, since
	$\HH^0(\set{w = 0} \cap \partial\Omega) = \HH^0(\set{w = 0}) = 1$.
	We fix the function $\psi \equiv 1 \in W^{1,q}(\Omega)$.
	Further, we consider the direction $z \equiv -1 \in C^1(\bar\Omega)$.
	Now, it is easy to check that
	\begin{equation*}
		\dual{\sign(w + t z)}{\psi}_{W^{1,q}(\Omega)}
		=
		\HH^1(\set{w + t z > 0}) - \HH^1(\set{w + t z < 0})
		=
		\begin{cases}
			1 & \text{if } t < 0, \\
			1 - 2 t & \text{if } t \in [0,1).
		\end{cases}
	\end{equation*}
	This shows that $\sign$ fails to be differentiable at $w$.
	It is clear that this idea can be generalized to dimensions $d > 1$.
\end{example}
\begin{example}[\texorpdfstring{Property \eqref{eq:asm_bdry}}{Boundary assumption} is not stable in \texorpdfstring{$C^1(\bar\Omega)$}{C1} for \texorpdfstring{$d>1$}{d > 1}]
Let us define the continuously differentiable function $w: \R^2 \to \R$ by
\[
 w(x_1,x_2) = x_2 - \max(0,x_1)^2.
\]
Let $\Omega = (0,1)^2$.
Then $\nabla w \ne 0$ on $\bar\Omega$ and $\set{w = 0} \cap \partial\Omega = \set0$.
Hence, the condition \eqref{eq:asm_bdry} is satisfied.
For $\epsilon>0$ set $w_\epsilon(x_1,x_2):= w( x_1 - \epsilon, x_2)$.
Then $\norm{ w_\epsilon - w }_{C^1(\bar\Omega)} \to 0$ for $\epsilon \to 0$.
In addition,  $\set{w_\epsilon = 0} \cap \partial\Omega = [0,\epsilon] \times \set0$,
i.e, $w_\epsilon$ does not satisfy \eqref{eq:asm_bdry} for $\epsilon>0$.
\end{example}

Finally, we show that condition \eqref{eq:asm_bdry}
is stable under a further assumption on $w$.
\begin{proposition}
	\label{prop:stability_3_4}
	We assume that $\partial\Omega$ is of class $C^1$.
	Let $w \in C^1(\bar\Omega)$ be given such that
	$\nabla w(x)$ and
	the unit
	outer normal vector $\nu(x)$
	are not colinear for all $x \in \set{w = 0} \cap \partial\Omega$.
	Then, \eqref{eq:asm_bdry} holds in a small ball (in $C^1(\bar\Omega)$) around $w$.
\end{proposition}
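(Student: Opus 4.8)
The plan is to combine a geometric fact with a perturbation argument. Geometrically, the hypothesis ``$\nabla w(x)$ and $\nu(x)$ are not colinear'' says exactly that $0$ is a regular value of the restriction of $w$ to $\partial\Omega$ at the point $x$; and if $0$ is a regular value of $w|_{\partial\Omega}$ on all of $\set{w = 0}\cap\partial\Omega$, then this set is a $C^1$-submanifold of $\partial\Omega$ of codimension one, hence $\HH^{d-1}$-null, which is exactly \eqref{eq:asm_bdry}. The perturbation point is that this regularity is stable under small $C^1(\bar\Omega)$-perturbations, and once that is established the conclusion follows.

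First I would set the stage. By the standing assumption, \cref{thm:C1_extension} provides an extension operator $E\colon C^1(\bar\Omega)\to\Cb^1(\R^d)$; since $Ef$ and $\nabla(Ef)$ coincide with $f$ and $\nabla f$ on $\bar\Omega\supseteq\partial\Omega$ and $\norm{Ew-E\tilde w}_{\Cb^1(\R^d)}\le C_E\norm{w-\tilde w}_{C^1(\bar\Omega)}$, it is enough to work with $Ew,E\tilde w\in\Cb^1(\R^d)$. As $\partial\Omega$ is a compact $C^1$-hypersurface it carries a continuous unit outer normal field $\nu$, and for $f\in C^1(\R^d)$ I define the tangential gradient $\nabla_\tau f(x):=\nabla f(x)-(\nabla f(x)\cdot\nu(x))\,\nu(x)$ for $x\in\partial\Omega$. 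Then $\abs{\nabla_\tau f(x)}\le\abs{\nabla f(x)}$, the map $f\mapsto\nabla_\tau f$ is linear and continuous into $C(\partial\Omega;\R^d)$, and $\nabla_\tau f(x)=0$ holds precisely when $\nabla f(x)$ is colinear with $\nu(x)$ (including the degenerate case $\nabla f(x)=0$). Hence the hypothesis of the proposition reads $\nabla_\tau w\ne0$ on the compact set $K:=\set{w=0}\cap\partial\Omega$ (which is vacuously empty when $d=1$, so we may assume $d\ge2$).

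Second I would run the stability estimate. By continuity of $\nabla_\tau w$ there are a relatively open $V\subseteq\partial\Omega$ with $K\subseteq V$ and a constant $c>0$ such that $\abs{\nabla_\tau w}\ge c$ on $V$; and since $\partial\Omega\setminus V$ is compact and disjoint from $K=\set{w=0}\cap\partial\Omega$, there is $\alpha>0$ with $\abs{w}\ge2\alpha$ on $\partial\Omega\setminus V$. Set $\rho:=\min\set{\alpha,c/2}/C_E$. For any $\tilde w\in C^1(\bar\Omega)$ with $\norm{w-\tilde w}_{C^1(\bar\Omega)}\le\rho$ we get $\norm{Ew-E\tilde w}_{\Cb^1(\R^d)}\le\min\set{\alpha,c/2}$, whence $\abs{\tilde w}\ge\alpha>0$ on $\partial\Omega\setminus V$---so that $\set{\tilde w=0}\cap\partial\Omega\subseteq V$---and $\abs{\nabla_\tau\tilde w}\ge\abs{\nabla_\tau w}-\abs{\nabla(w-\tilde w)}\ge c/2>0$ on $V$. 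Thus $\nabla_\tau\tilde w\ne0$ on $\set{\tilde w=0}\cap\partial\Omega$, i.e., $0$ is a regular value of $\tilde w|_{\partial\Omega}$.

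It then remains to convert this regularity into $\HH^{d-1}(\set{\tilde w=0}\cap\partial\Omega)=0$, which I regard as the main (though routine) technical step, since it is where one has to invoke the manifold version of the regular value theorem. One covers the compact set $\set{\tilde w=0}\cap\partial\Omega$ by finitely many $C^1$-charts $\phi_i\colon W_i\subseteq\R^{d-1}\to\partial\Omega$. The pullback $\tilde w\circ\phi_i$ is $C^1$ on $W_i$ with $\nabla(\tilde w\circ\phi_i)(t)=D\phi_i(t)^\top\nabla\tilde w(\phi_i(t))=D\phi_i(t)^\top\nabla_\tau\tilde w(\phi_i(t))$, because $D\phi_i(t)^\top\nu(\phi_i(t))=0$; since $D\phi_i(t)$ has full rank $d-1$ and $\nabla_\tau\tilde w(\phi_i(t))$ lies in its range, this gradient vanishes only where $\nabla_\tau\tilde w(\phi_i(t))=0$, which does not occur on the zero set of $\tilde w\circ\phi_i$. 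Hence $0$ is a regular value of $\tilde w\circ\phi_i$, so by the implicit function theorem $\set{\tilde w\circ\phi_i=0}$ is a $C^1$-submanifold of $\R^{d-1}$ of dimension $d-2$, in particular $\lambda^{d-1}$-null; pushing this through the locally Lipschitz charts $\phi_i$ and taking the finite union over $i$ gives $\HH^{d-1}(\set{\tilde w=0}\cap\partial\Omega)=0$. This is precisely \eqref{eq:asm_bdry} for $\tilde w$, so \eqref{eq:asm_bdry} holds throughout the ball of radius $\rho$ around $w$ in $C^1(\bar\Omega)$, which is the assertion. No genuine difficulty arises beyond making the chart computation precise.
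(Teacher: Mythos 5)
Your proof is correct and rests on the same geometric core as the paper's: the non-colinearity hypothesis forces $\set{w=0}\cap\partial\Omega$ to be a codimension-$2$ submanifold of $\R^d$, hence $\HH^{d-1}$-null, and this is stable under small $C^1(\bar\Omega)$-perturbations. The execution differs in detail. The paper phrases non-colinearity via the $d\times 2$ matrix $J(x)=(\nabla w(x),\nu(x))$ having full column rank, quantifies it by a uniform lower bound on $\det(J^\top J)$, and obtains the measure-zero conclusion by locally writing $\partial\Omega=\set{\varphi=0}$ and observing that $\set{w=0}\cap\partial\Omega$ is locally the zero set of the rank-$2$ map $F=(Ew,\varphi)\colon\R^d\to\R^2$. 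You instead work intrinsically on $\partial\Omega$: you encode non-colinearity as $\nabla_\tau w\ne 0$ for the tangential gradient, and get measure zero by pulling back through $C^1$-charts of $\partial\Omega$ and invoking the regular value theorem in $\R^{d-1}$. Your stability step is also more fully written out than the paper's one-line assertion --- you separately control $\abs{\nabla_\tau w}$ on a relatively open neighborhood $V$ of $\set{w=0}\cap\partial\Omega$ and $\abs{w}$ on $\partial\Omega\setminus V$, which makes explicit the compactness mechanism the paper leaves implicit, namely that the zero set itself moves under perturbation. Both routes are sound; the paper's ambient-$F$ formulation is marginally shorter, while yours is a bit more self-contained and transparent about why the stability holds.
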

\begin{proof}
	The assumption on $\nabla w$
	holds if and only if
	the matrix
	$J(x) := (\nabla w(x), \nu(x)) \in \R^{d \times 2}$
	has full column rank for all $x \in \set{w = 0} \cap \partial\Omega$.
	This, in turn,
	is equivalent to
	$\det( J(x)^\top J(x) ) > 0$ for all $x \in \set{w = 0} \cap \partial\Omega$.
	By continuity and compactness,
	this yields the existence of $\tau > 0$
	such that
	$\det( J(x)^\top J(x) ) \ge \tau$ for all $x \in \set{w = 0} \cap \partial\Omega$.
	Since $J$ depends continuously on $w \in C^1(\bar\Omega)$,
	it is clear that the assumption on $w$ is stable in the space $C^1(\bar\Omega)$.
	Consequently, it remains to show that \eqref{eq:asm_bdry}
	holds for $w$.
	Let $x_0 \in \set{w = 0} \cap \partial\Omega$ be given.
	Locally around $x_0$, $\partial\Omega$ can be written as $\set{\varphi = 0}$
	for some $\varphi \in C^1(\R^d)$ with $\nabla\varphi(x_0) = \nu(x_0)$.
	Consequently,
	$\set{w = 0} \cap \partial\Omega$
	coincides with $\set{ F = 0 }$ locally, where $F \in C^1(\R^d, \R^2)$
	consists of the components $E w$ and $\varphi$.
	By assumption, the Jacobian $F'(x_0) = [\nabla w(x_0), \nu(x_0)]^\top$ has rank $2$.
	Consequently,
	$\set{w = 0} \cap \partial\Omega \cap U_r(x_0) = \set{ F = 0 } \cap U_r(x_0)$
	is a surface of codimension $2$ for some $r > 0$ (depending on $x_0$).
	Hence, its measure w.r.t.\ $\HH^{d-1}$ is zero.
	By compactness of $\set{w = 0} \cap \partial\Omega$
	and additivity of $\HH^{d-1}$, this shows the claim.
\end{proof}
By combining \cref{thm:differentiability_signum}
with \cref{prop:stability_3_4}
we get a nice result.
\begin{corollary}
	\label{cor:some_open_set}
	Let $\partial\Omega$ be of class $C^1$.
	Then, the set
	\begin{equation*}
		\set{
			w \in C^1(\bar\Omega)
			\given
			\nabla w \ne 0 \text{ on } \set{w = 0}
			,\quad
			\text{$\nabla w$ and $\nu$ are not colinear on $\set{w = 0} \cap \partial\Omega$}
		}
	\end{equation*}
	is open in $C^1(\bar\Omega)$
	and $\sign \colon C^1(\bar\Omega) \to W^{1,q}(\Omega)^\star$
	is continuously differentiable on this set with derivative $\sign'$.
\end{corollary}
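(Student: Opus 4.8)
\emph{Proof sketch.}
The plan is to obtain the corollary as a bookkeeping combination of the stability results \cref{lem:stability_of_reg_condition,prop:stability_3_4} with the differentiability theorems \cref{thm:differentiability_signum,thm:cty_derivative2}. Write $\mathcal O$ for the set in the statement; it is the intersection of
$A := \set{w \in C^1(\bar\Omega) \given \nabla w \ne 0 \text{ on } \set{w = 0}}$
and
$B := \set{w \in C^1(\bar\Omega) \given \text{$\nabla w$ and $\nu$ are not colinear on }\set{w = 0}\cap\partial\Omega}$.
First I would record that the standing hypotheses are in force: since $\bar\Omega$ is compact and $\partial\Omega$ is of class $C^1$, the boundary is a compact $C^1$-hypersurface, so $\HH^{d-1}(\partial\Omega) < \infty$; moreover a bounded $C^1$-domain is a Lipschitz set in the sense of \cref{lem:Lipschitz_implies_ulq}, hence uniformly locally quasiconvex, and it is an $(1,q)$-extension domain, so \cref{asm:standing} holds and \cref{thm:differentiability_signum,thm:cty_derivative2} are applicable.

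Next I would prove that $\mathcal O$ is open. Openness of $A$ is exactly \cref{lem:stability_of_reg_condition}. For $B$ I would reuse the argument from the proof of \cref{prop:stability_3_4}: the non-colinearity of $\nabla w(x)$ and $\nu(x)$ is equivalent to $\det\parens{J(x)^\top J(x)} > 0$ for $J(x) := (\nabla w(x),\nu(x))$, and by compactness of $\set{w=0}\cap\partial\Omega$ this quantity is bounded below there by some $\tau>0$; since $w\mapsto\nabla w$ is continuous on $C^1(\bar\Omega)$ and, for $\norm{w-\tilde w}_{C^1(\bar\Omega)}$ small, $\set{\tilde w = 0}\cap\partial\Omega$ is contained in a small boundary neighbourhood of $\set{w=0}\cap\partial\Omega$ (outside of which $|w|$, hence $|\tilde w|$, stays bounded away from $0$), the inequality persists on $\set{\tilde w=0}\cap\partial\Omega$ for all $\tilde w$ in a $C^1(\bar\Omega)$-ball around $w$. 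This is precisely the stability assertion used inside the proof of \cref{prop:stability_3_4}, so $B$, and therefore $\mathcal O = A\cap B$, is open.

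Finally, for continuous differentiability I would fix $w\in\mathcal O$. By \cref{prop:stability_3_4} the boundary condition \eqref{eq:asm_bdry} holds on some $C^1(\bar\Omega)$-ball around $w$; intersecting with the ball furnished by \cref{lem:stability_of_reg_condition} and with $\mathcal O$ (open), we obtain a neighbourhood $U\subseteq\mathcal O$ of $w$ such that every $\tilde w\in U$ satisfies $\nabla\tilde w\ne 0$ on $\set{\tilde w=0}$ together with \eqref{eq:asm_bdry}. Then \cref{thm:differentiability_signum} applies at each $\tilde w\in U$, giving Fréchet differentiability of $\sign\colon C^1(\bar\Omega)\to W^{1,q}(\Omega)\dualspace$ at $\tilde w$ with derivative $\sign'(\tilde w)\in\LL(C^1(\bar\Omega),W^{1,q}(\Omega)\dualspace)$ as in \eqref{eq:derivative_signum}, and \cref{thm:cty_derivative2} gives continuity of $\sign'$ at $\tilde w$. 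Since $\tilde w\in U$ was arbitrary and $U$ is a neighbourhood of the arbitrary point $w\in\mathcal O$, the map $\sign$ is continuously differentiable on $\mathcal O$ with derivative $\sign'$. I do not expect any genuine obstacle beyond this assembly; the one step requiring a little care is the openness of $B$, precisely because the constraint set $\set{w=0}\cap\partial\Omega$ itself varies with $w$, which forces the argument via the uniform lower bound $\tau$ on a fixed compact set rather than a pointwise continuity argument. \qed
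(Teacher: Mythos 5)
Your proof is correct and follows the same route the paper intends: the corollary is announced in the text as a direct combination of \cref{thm:differentiability_signum} with \cref{prop:stability_3_4}, and you supply exactly the bookkeeping the paper leaves implicit — openness via \cref{lem:stability_of_reg_condition} and the determinant argument inside \cref{prop:stability_3_4}, finiteness of $\HH^{d-1}(\partial\Omega)$ and the standing \cref{asm:standing} from the $C^1$-boundary, then pointwise application of \cref{thm:differentiability_signum} and \cref{thm:cty_derivative2}. Your explicit care about the stability of the non-colinearity condition on the $w$-dependent set $\set{w=0}\cap\partial\Omega$ (via the uniform lower bound $\tau$ and the fact that $\set{\tilde w=0}\cap\partial\Omega$ stays in a small boundary neighbourhood) is the one spot the paper glosses over with ``it is clear that the assumption on $w$ is stable''; you handled it correctly.
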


\section{Application to bang-bang optimal control}
\label{sec:optimal_control}
In this section, we show that our results from \cref{sec:cty_derivative_signum}
enable us to prove local superlinear convergence
for Newton's method applied to the optimality system of optimal control problems
for which the solution is of bang-bang type.
In \cref{sec:linear_quadratic}
we discuss the case of a linear state equation
and in \cref{subsec:semilinear}
we consider a semilinear elliptic equation.
In both situations,
we also suggest a heuristic for the globalization of Newton's method.
For simplicity, we only discuss the situation that the control bounds are given by $\pm 1$.
The general situation is consequently achieved by a simple scaling argument.

\subsection{Linear-quadratic optimization problem}
\label{sec:linear_quadratic}
We want to use Newton's method to solve the following optimization problem:
\[
 \min_{u \in L^2(\Omega)} \frac12 \|Su - y_d\|_{L^2(\Omega)}^2
\]
subject to $ |u|\le 1 $ almost everywhere on $\Omega$. Here, $S$ is a linear and continuous operator on $L^2(\Omega)$ and $y_d\in L^2(\Omega)$ is given.
With standard arguments one can verify the existence of a solution.
Moreover, if $S$ is injective, the solution is unique.

A feasible point $\bar u$ is a solution if and only if
\[
 \bar u(x) \in \Sign( -[S^*( S\bar u - y_d)](x) )
 \qquad \text{for a.a.\ } x \in \Omega.
\]
Introducing the dual variable $\bar\xi := y_d- S \bar u \in L^2(\Omega)$, we see that $\bar u \in \Sign (S^*\bar \xi)$ if and only if
\[
 \bar \xi +S \Sign(S^* \bar \xi) -y_d \ni 0.
\]
If $\bar\xi$ is such that $\Sign( S^* \bar \xi)$ is single-valued a.e.,
i.e., if the Lebesgue measure of $\set{ S^* \bar\xi = 0}$ is zero, then the inclusion becomes
\begin{equation}
 \label{eq:equation}
 \bar \xi +S \sign(S^* \bar \xi) -y_d = 0.
\end{equation}
This is the equation we want to solve using Newton's method.

\begin{assumption}\label{ass_nice_solution}
 Let $\bar\xi$ be a solution of \eqref{eq:equation} such that $w:= S^* \bar \xi$ satisfies
 $w \in C^1(\bar\Omega)$ with $\nabla w \ne 0$ on $\set{w = 0}$ and the condition \eqref{eq:asm_bdry}.
\end{assumption}

\begin{assumption}\label{ass_S}
 We assume that there exist $p\ge 2$ and $q > 1$ such that
 $\Omega$ satisfies \cref{asm:standing}
 and
 $S \in \LL(L^2(\Omega))$ and its adjoint $S^* \in \LL(L^2(\Omega))$ satisfy
 \begin{enumerate}
  \item $S \in \LL( W^{1,q}(\Omega)\dualspace, L^p(\Omega))$
  is compact,
  \item $S^* \in \LL( L^p(\Omega), C^1(\bar\Omega))$.
 \end{enumerate}
\end{assumption}

This is the case if $S^* \in \LL(L^p(\Omega), W^{2,p}(\Omega))$ for $p>d$ and $S^* \in \LL(L^{p'}(\Omega), W^{2,p'}(\Omega))$, so that $W^{2,p}(\Omega) \hookrightarrow C^1(\bar\Omega)$
and $W^{2,p'}(\Omega)\hookrightarrow W^{1,q}(\Omega)$.
If $S$ is the solution operator of Poisson's equation
on a sufficiently regular domain $\Omega \subset \R^d$, $d \ge 2$,
we can choose any $p \in (d,\infty)$ and $q = p'$.

Let us define the map $F \colon L^p(\Omega) \to L^p(\Omega) $
by
\[
 F( \xi):= \xi + S\sign(S^*  \xi) -y_d .
\]
Note that $F(\bar\xi) = 0$.
In addition, the term $\sign'( S^* \xi)$ is well-defined in a neighborhood of $\bar\xi$.
The following map serves as a candidate of the derivative of $F$:
\[
 G(\xi) := \id + S \sign'(S^*\xi) S^*.
\]

\begin{lemma}
 \label{lem:lemma}
Let $\bar \xi \in L^p(\Omega)$ satisfy \cref{ass_nice_solution}
and let \cref{ass_S} be satisfied.
Then there is an open neighborhood $O\subset L^p(\Omega)$ of $\bar \xi$  such that
\begin{enumerate}
 \item \label{it_F_continuous} $F: O  \to L^p(\Omega)$ is continuous,
 \item \label{it_F_Frechet} $F$ is Fréchet differentiable  at $\bar \xi$ with $F'(\bar \xi) = G(\xi)$,
 \item \label{it_G_continuous} $G: O \to \LL(L^p(\Omega))$ is continuous at $\bar \xi$,
 \item \label{it_G_invertible} there is $M>0$ such that $\norm{ G(\xi)^{-1}}_{\LL(L^p(\Omega))} \le M$ for all $\xi\in O$.
\end{enumerate}
\end{lemma}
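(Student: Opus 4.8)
The four claims decouple naturally, so I would prove them one at a time, using the composition structure $F = \id + S \circ \sign \circ S^*$ together with the mapping properties in \cref{ass_S} and the differentiability results of \cref{sec:cty_derivative_signum}.

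For \ref{it_F_continuous}, I would first invoke \cref{lem:stability_of_reg_condition} and \cref{prop:stability_3_4}-type stability — more precisely, the openness part of \cref{thm:cty_derivative2} together with \cref{lem:continuity_sign_in_L1} — to produce an open ball $O \subset L^p(\Omega)$ around $\bar\xi$ on which $\sign'(S^*\xi)$ is well-defined. The chain is: $S^* \in \LL(L^p(\Omega), C^1(\bar\Omega))$ is linear and bounded, hence continuous; $\sign \colon C^1(\bar\Omega) \to W^{1,q}(\Omega)\dualspace$ is continuous on $O$ because it is even Fréchet differentiable there by \cref{thm:differentiability_signum} (the hypotheses $\nabla w \ne 0$ on $\set{w=0}$ and \eqref{eq:asm_bdry} hold at $w = S^*\bar\xi$ by \cref{ass_nice_solution}, and $\HH^{d-1}(\partial\Omega)<\infty$ follows from the $C^1$-boundary or the $(1,q)$-extension property; actually we only need continuity of $\sign$, which holds on a neighborhood); and $S \in \LL(W^{1,q}(\Omega)\dualspace, L^p(\Omega))$ is continuous. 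Composition of continuous maps is continuous, and adding the (affine continuous) term $\xi \mapsto \xi - y_d$ preserves this. I should take care that the neighborhood on which $\sign$ is continuous/differentiable, pulled back through $S^*$, contains an $L^p$-ball around $\bar\xi$ — this uses continuity of $S^*$ into $C^1(\bar\Omega)$.

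For \ref{it_F_Frechet}, the same chain rule applies at the single point $\bar\xi$: $\sign$ is Fréchet differentiable at $w = S^*\bar\xi$ by \cref{thm:differentiability_signum} with derivative $\sign'(w) \in \LL(C^1(\bar\Omega), W^{1,q}(\Omega)\dualspace)$, the outer and inner maps $S, S^*$ are bounded linear, so $F$ is Fréchet differentiable at $\bar\xi$ with $F'(\bar\xi) = \id + S\,\sign'(S^*\bar\xi)\,S^* = G(\bar\xi)$. (The statement writes $G(\xi)$ but clearly means $G(\bar\xi)$.) For \ref{it_G_continuous}, continuity of $G$ at $\bar\xi$ follows from \cref{thm:cty_derivative2}, which gives continuity of $\sign' \colon C^1(\bar\Omega) \to \LL(C^1(\bar\Omega), W^{1,q}(\Omega)\dualspace)$ at $w$, composed on the left with $S$ and on the right with $S^*$, both fixed bounded linear operators; the compactness of $S$ from $W^{1,q}(\Omega)\dualspace$ to $L^p(\Omega)$ is not needed here but will matter in \ref{it_G_invertible}.

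The last claim \ref{it_G_invertible} is the main obstacle and where the structural hypotheses are really used. The strategy is: first show $G(\bar\xi) = \id + S\,\sign'(S^*\bar\xi)\,S^*$ is invertible in $\LL(L^p(\Omega))$, then deduce uniform invertibility on a small neighborhood $O$ by perturbation using \ref{it_G_continuous}. For invertibility of $G(\bar\xi)$: the operator $K := S\,\sign'(S^*\bar\xi)\,S^*$ factors through $W^{1,q}(\Omega)\dualspace$, and since $S \in \LL(W^{1,q}(\Omega)\dualspace, L^p(\Omega))$ is \emph{compact}, $K$ is a compact operator on $L^p(\Omega)$. By the Fredholm alternative, $\id + K$ is invertible iff it is injective, i.e.\ iff $-1$ is not an eigenvalue. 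One expects that $\sign'(S^*\bar\xi)$ is a positive semidefinite operator in an appropriate sense — from \eqref{eq:derivative_signum}, $\dual{\sign'(w)z}{\psi} = 2\int_{\set{w=0}} \psi z / \abs{\nabla w}\,\d\HH^{d-1}$, which when paired as $\dual{\sign'(w)(S^*\phi)}{S^*\phi}$ for $\phi \in L^{p'}$ gives $2\int_{\set{w=0}} \abs{S^*\phi}^2 / \abs{\nabla w}\,\d\HH^{d-1} \ge 0$ — so that $\dual{(\id+K)\phi}{\phi} \ge \norm{\phi}^2 > 0$ for $\phi \ne 0$ when the duality pairing is over $L^2$; this forces injectivity. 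I would make this rigorous by testing: if $(\id+K)\phi = 0$ with $\phi \in L^p(\Omega) \subset L^2(\Omega)$, then $\phi = -K\phi = -S\,\sign'(S^*\bar\xi)\,S^*\phi$, and pairing with $\phi$ in the $L^2$ inner product, using that $S$ and $S^*$ are genuine adjoints on $L^2$ and the semidefiniteness just noted, yields $\norm{\phi}_{L^2}^2 = -\langle \sign'(S^*\bar\xi)S^*\phi, S^*\phi\rangle \le 0$, hence $\phi = 0$. Thus $G(\bar\xi)$ is boundedly invertible, say $\norm{G(\bar\xi)^{-1}}_{\LL(L^p)} \le M_0$. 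Finally, for $\xi \in O$ with $\norm{G(\xi) - G(\bar\xi)}_{\LL(L^p)} \le 1/(2M_0)$ — achievable by shrinking $O$ and using \ref{it_G_continuous} — a Neumann series argument gives $G(\xi)$ invertible with $\norm{G(\xi)^{-1}}_{\LL(L^p)} \le 2M_0 =: M$, which is the claim. The delicate points I would watch: justifying the $L^2$-pairing manipulations for functions that a priori live only in $L^p$ and in the dual space $W^{1,q}(\Omega)\dualspace$ (the embeddings $L^p(\Omega) \embeds L^2(\Omega)$ for $p \ge 2$ on bounded $\Omega$, and $L^2(\Omega) \embeds W^{1,q}(\Omega)\dualspace$, make this legitimate), and confirming that the compactness in \cref{ass_S}\,(i) is exactly what licenses the Fredholm alternative.
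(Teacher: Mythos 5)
Your proposal follows essentially the same route as the paper: the neighborhood is built by pulling $U_\delta(S^*\bar\xi)$ back through $S^*$ using \cref{lem:stability_of_reg_condition}; \ref{it_F_Frechet} comes from \cref{thm:differentiability_signum} via the chain rule; \ref{it_G_continuous} from \cref{thm:cty_derivative2}; and \ref{it_G_invertible} from the compact-perturbation/Fredholm argument with the positivity estimate $\dual{\xi}{G(\bar\xi)\xi} \ge \int_\Omega \abs{\xi}^2\,\d\lambda$, followed by a perturbation argument to make the inverse bound uniform. One point to fix: you assert that $\sign \colon C^1(\bar\Omega) \to W^{1,q}(\Omega)\dualspace$ is Fréchet differentiable on the whole neighborhood, but \cref{thm:differentiability_signum} also requires condition \eqref{eq:asm_bdry}, which is not stable under $C^1$-perturbations in dimension $d>1$ (see the second example following \cref{thm:differentiability_signum}); so you only get differentiability at $\bar w = S^*\bar\xi$, not nearby. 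You correctly note that continuity of $\sign$ suffices for \ref{it_F_continuous}, and that is what the paper establishes via \cref{lem:continuity_sign_in_L1} (continuity of $\sign$ into $L^1(\Omega)$, which upgrades to $W^{1,q}(\Omega)\dualspace$ since the range is uniformly bounded in $L^\infty$); you should invoke that lemma rather than leaning on Fréchet differentiability away from $\bar\xi$.
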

\begin{proof}
The proof relies on \cref{ass_S} and the properties of $\sign$ and $\sign'$ collected in \cref{sec:cty_derivative_signum}.
Due to \cref{lem:stability_of_reg_condition}, there is $\delta>0$ such that $\nabla \tilde w\ne0$ on $\set{\tilde w=0}$ for all $\tilde w$ with $\|S^*\bar\xi - \tilde w\|_{C^1(\bar\Omega)}<\delta$.
Set $O := S^{-*}( U_\delta(S^*\bar\xi))$.

Now, \ref{it_F_continuous} follows from \cref{lem:continuity_sign_in_L1}, \ref{it_F_Frechet} is a consequence of \cref{thm:differentiability_signum}.
Property \ref{it_G_continuous} is implied by \cref{thm:cty_derivative2}.

It remains to prove \ref{it_G_invertible}.
 The operator $G(\bar \xi) \in \LL(L^p(\Omega))$ is a compact perturbation of the identity.
 Consequently, it is a Fredholm operator with index $0$.
 Further, for every $\xi \in L^p(\Omega) \setminus \set{0}$, we have
 $\dual{\xi}{G(\bar\xi)\xi}_{L^p(\Omega)} \ge \int_\Omega\abs{\xi}^2 \d\lambda > 0$,
 which shows that $G(\bar\xi)$ is injective.
 By the Fredholm alternative theorem, $G(\bar\xi)$ is continuously invertible.

Since $\xi \mapsto G(\xi)$ is continuous by \ref{it_G_continuous},
we can make $O$ smaller to ensure that the inverses $G(\xi)^{-1}$ exist and are uniformly bounded for all $\xi \in O$.
\end{proof}
The assertions from this lemma give rise to superlinear convergence of Newton's method in $L^p(\Omega)$,
which can be proven by classical arguments.
\begin{theorem}
 \label{thm:quadratic_newton}
 Let $\bar \xi \in L^p(\Omega)$ satisfy \cref{ass_nice_solution}
 and let \cref{ass_S} be satisfied.
 Then there is an open neighborhood $O\subset L^p(\Omega)$ of $\bar \xi$  such that
 for each $\xi_0 \in O$, the iterates
 \begin{equation*}
  \xi_{k+1} := \xi_k - G(\xi_k)^{-1} F(\xi_k)
 \end{equation*}
 stay in $O$ and converge superlinearly in $L^p(\Omega)$ towards $\bar\xi$.
\end{theorem}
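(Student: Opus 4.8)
The plan is to run the classical local convergence analysis for a Newton-type iteration in which the operator $G$ serves as a surrogate for the (generally unavailable) derivative of $F$. The key point to keep in mind is that \cref{lem:lemma} provides Fréchet differentiability of $F$ \emph{only} at the solution $\bar\xi$ — differentiability at nearby points cannot be guaranteed because condition \eqref{eq:asm_bdry} is not stable under $C^1(\bar\Omega)$-perturbations, cf.\ \cref{ex:not_differentiable} and the example following it — and it provides continuity of $G$ only at $\bar\xi$. Consequently the whole estimate must be centered at $\bar\xi$: no linearization of $F$ at the iterate $\xi_k$ and no mean-value argument along $[\bar\xi,\xi_k]$ may be used. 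Everything else is the textbook contraction estimate.

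Concretely, I would write $e_k := \xi_k - \bar\xi$ and, for $\xi$ in the neighborhood $O$ supplied by \cref{lem:lemma}, set
\[
 r(\xi) := F(\xi) - F(\bar\xi) - G(\bar\xi)(\xi-\bar\xi) = F(\xi) - G(\bar\xi)(\xi-\bar\xi),
\]
so that $\norm{r(\xi)}_{L^p(\Omega)} = o\bigl(\norm{\xi-\bar\xi}_{L^p(\Omega)}\bigr)$ as $\xi\to\bar\xi$ by \cref{lem:lemma}\,\ref{it_F_Frechet} (with $F'(\bar\xi)=G(\bar\xi)$). Using $F(\bar\xi)=0$ and the Newton update one obtains the recursion
\[
 e_{k+1} = e_k - G(\xi_k)^{-1}F(\xi_k) = G(\xi_k)^{-1}\bigl(G(\xi_k)e_k - F(\xi_k)\bigr) = G(\xi_k)^{-1}\bigl((G(\xi_k)-G(\bar\xi))e_k - r(\xi_k)\bigr),
\]
where the last equality uses $F(\xi_k)=G(\bar\xi)e_k + r(\xi_k)$. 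Together with the uniform bound $\norm{G(\xi)^{-1}}_{\LL(L^p(\Omega))}\le M$ from \cref{lem:lemma}\,\ref{it_G_invertible} this gives
\[
 \norm{e_{k+1}}_{L^p(\Omega)} \le M\,\norm{G(\xi_k)-G(\bar\xi)}_{\LL(L^p(\Omega))}\,\norm{e_k}_{L^p(\Omega)} + M\,\norm{r(\xi_k)}_{L^p(\Omega)}.
\]

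From here the argument is standard. Fix $\varepsilon\in(0,1)$. By continuity of $G$ at $\bar\xi$ (\cref{lem:lemma}\,\ref{it_G_continuous}) and by $\norm{r(\xi)}_{L^p(\Omega)}=o(\norm{\xi-\bar\xi}_{L^p(\Omega)})$ there is $\rho(\varepsilon)>0$ with $U_{\rho(\varepsilon)}(\bar\xi)\subset O$ on which $M\norm{G(\xi)-G(\bar\xi)}\le\varepsilon/2$ and $M\norm{r(\xi)}\le(\varepsilon/2)\norm{\xi-\bar\xi}$. Then, whenever $\xi_k\in U_{\rho(\varepsilon)}(\bar\xi)$, the estimate above yields $\norm{e_{k+1}}\le\varepsilon\norm{e_k}<\norm{e_k}<\rho(\varepsilon)$, so $\xi_{k+1}\in U_{\rho(\varepsilon)}(\bar\xi)$ as well and $G(\xi_{k+1})^{-1}$ exists (so the iteration is well defined). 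Choosing the neighborhood in the theorem to be $O:=U_{\rho(1/2)}(\bar\xi)$ and arguing by induction, every trajectory started in $O$ stays in $O$ and satisfies $\norm{e_k}\le 2^{-k}\norm{e_0}\to 0$. Superlinearity then follows by a bootstrap: given an arbitrary $\varepsilon\in(0,1)$, since $e_k\to 0$ the iterates enter $U_{\rho(\varepsilon)}(\bar\xi)$ after finitely many steps, whence $\norm{e_{k+1}}\le\varepsilon\norm{e_k}$ from that index on; thus $\limsup_{k\to\infty}\norm{e_{k+1}}_{L^p(\Omega)}/\norm{e_k}_{L^p(\Omega)}\le\varepsilon$ for every $\varepsilon>0$, i.e.\ the ratio tends to $0$, which is the asserted superlinear convergence.

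The only non-routine issue is the conceptual one flagged above: because $G(\xi_k)$ is generally not the Fréchet derivative of $F$ at $\xi_k$ and $F$ is not known to be differentiable off $\bar\xi$, one must resist linearizing at the current iterate and instead combine the continuity of $G$ at $\bar\xi$ with the first-order expansion of $F$ at $\bar\xi$ — both of which are exactly what \cref{lem:lemma} delivers.
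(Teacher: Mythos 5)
Your proposal is correct and takes essentially the same route the paper does (the paper's proof, present in the source but commented out, uses the identical decomposition of $e_{k+1}$ via $G(\xi_k)^{-1}\bigl[(G(\xi_k)-G(\bar\xi))e_k + (G(\bar\xi)e_k - F(\xi_k) + F(\bar\xi))\bigr]$, the uniform bound $M$ on $G(\xi)^{-1}$, continuity of $G$ at $\bar\xi$, and Fréchet differentiability of $F$ only at $\bar\xi$, to obtain first a contraction factor and then, from the same inequality, the superlinear rate). Your explicit remark that one must center all estimates at $\bar\xi$ rather than at $\xi_k$ is exactly the point the paper's argument implicitly respects.
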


In the numerical computations,
it is useful to solve the Newton system only approximately
via the CQ method
as in \cite{DWachsmuth2025}.
Towards a possible globalization,
we define the map $\Phi: L^2(\Omega) \to \R$ by
\[
 \Phi( \xi ):= \frac12\norm{ \xi-y_d }_{L^2(\Omega)}^2 + \norm{ S^* \xi}_{ L^1(\Omega) }.
\]
If $\xi$ is such that the measure of $\set{S^*\xi=0}$ is zero, then $\Phi$ is Gâteaux differentiable at $\xi$ with gradient
\[
 \nabla \Phi(\xi) := \xi +S \sign(S^* \xi) -y_d.
\]
Hence, \eqref{eq:equation} is equivalent to $ \nabla \Phi(\bar\xi) =0$.
The function $\Phi$ can be used as a merit function to globalize the inexact Newton method to solve \eqref{eq:equation},
similar to the work in \cite{DWachsmuth2025}.
The resulting method performed well in numerical computations, see \cref{sec:numerics}.
A convergence analysis of such a globalized procedure is still open.
In particular, it is unclear how to deal with points $\xi_k$, where $\Phi$ is not differentiable.

\begin{remark}[Fixed-point method]
\label{rem:fixed_point}
In order to solve  bang-bang control problems, several publications used a fixed-point method, see, e.g., \cite{DeckelnickHinze2012,Fuica2024,FuicaJork2025}.
With the results of this paper, we can explain, why a fixed-point method might converge.
Let us replace the control constraints $\abs{u} \le 1$ with $\abs{u} \le u_b$ for $u_b \ge 0$.
Then, the analogue of the optimality condition \eqref{eq:equation} is
$\bar\xi = y_d - u_b S \sign(S^* \bar\xi)$.
We could try to solve this equation by the fixed point method
$\xi_{k+1} := y_d - u_b S \sign(S^* \xi_k)$.
The derivative of the iteration map $\Psi ( \xi ) := y_d - u_b S \sign(S^* \xi)$ at the reference point
$\xi$ is given by
\[
 \Psi'( \bar \xi) \delta\xi= - u_b S\sign'( S^*\bar \xi) S^*.
\]
Let $w:= S^* \bar \xi$.
Under \cref{ass_nice_solution,ass_S}, the fixed point method is well-posed in $L^p(\Omega)$.
In addition, we can estimate
\[
 \| \Psi'( \bar \xi ) \|_{\LL(L^p(\Omega))} \le u_b \|S\|_{\LL( W^{1,q}(\Omega)\dualspace, L^p(\Omega))} \|\sign'(w)\|_{\LL(C^1(\bar\Omega),W^{1,q}(\Omega)\dualspace)} \|S^*\|_{\LL( L^p(\Omega), C^1(\bar\Omega))}.
\]
For small values of $u_b$,
we could expect that the operator norm of $\Psi'(\bar\xi)$ is smaller than $1$
and, consequently, the fixed point iteration
converges at least locally.
Indeed, note that $\bar\xi \to \xi_0 := y_d$ as $u_b \to 0$.
Consequently, if $w_0 := S^* y_d$
satisfies $\nabla w_0 \ne 0$ on $\set{w_0 = 0}$
and
$\HH^{d-1}\parens*{ \set{w_0 = 0} \cap \partial\Omega } = 0$,
the continuity of $\sign'$ from \cref{thm:cty_derivative2}
enables us to prove $\norm{\Psi'(\bar\xi)} < 1$ for $u_b$ small enough.
\end{remark}

\subsection{Semilinear control problem}
\label{subsec:semilinear}
Now, we consider the problem
\begin{align*}
	\text{Minimize}& \quad  \frac12\norm{y - y_d}_{L^2(\Omega)}^2 \\
	\text{s.t.}& \quad\mathopen{} -\Delta y + a(y) = u \text{ in } \Omega, \quad \frac{\partial y}{\partial\nu} = 0 \text{ on }\partial\Omega\\
	\text{and}& \quad\mathopen{} -1 \le u \le 1
	.
\end{align*}
We assume that $\Omega\subset \R^d$ is a bounded Lipschitz domain and $a \in C^2(\R)$ with $a' \ge 1$.
Let $\bar u$ be locally optimal in $L^2(\Omega)$. Since the feasible set of the above problem is bounded in $L^\infty(\Omega)$
it follows that $\bar u$ is locally optimal in $L^r(\Omega)$ for all $r\in (d,\infty]$.
Then by classical arguments  \cite[Chapter 4]{Troltzsch2010}, \cite{Casas2012:1},
there exists an adjoint state $\bar p$ such that
\begin{equation*}
	\bar u \in \Sign(-\bar p),
	\qquad
	-\Delta \bar p + a'(\bar y) \bar p = \bar y - y_d \quad\text{in } \Omega,
	\qquad
	\frac{\partial \bar p}{\partial\nu} = 0 \quad\text{on }\partial\Omega,
\end{equation*}
As in \cref{sec:linear_quadratic},
we convert this into an equation by assuming that the set
$\set{\bar p = 0}$
has measure $0$.
Consequently, $\Sign(-\bar p)$ is single-valued a.e.\ and
the above optimality system can be written as
\begin{equation}
	\label{eq:optimality}
	F(\bar y, \bar p) = 0
	,
\end{equation}
where
$F \colon W^{1,p}(\Omega) \times W^{2,p}_*(\Omega) \to L^p(\Omega) \times W^{1,p'}(\Omega)\dualspace$
is defined via
\begin{equation*}
	F(y,p) :=
	\begin{pmatrix}
		-\Delta p + a'(y) p - y + y_d \\
		-\Delta y + a(y) - \sign(-p)
	\end{pmatrix}
	.
\end{equation*}
Here, the exponent $p$ is assumed to satisfy $p \in (d,\infty)$, see \cref{asm:assumption} below,
and
\begin{equation*}
	W^{2,p}_*(\Omega)
	:=
	\set*{
		v \in W^{2,p}(\Omega)
		\given
		\frac{\partial v}{\partial \nu} = 0\text{ on }\partial\Omega
	}
\end{equation*}
includes the Neumann boundary condition.
Note that we already have eliminated the control variable.
We are going to solve \eqref{eq:optimality}
by Newton's method.
The candidate for the derivative of $F$ is
\begin{equation*}
	G(y,p)
	:=
	\begin{pmatrix}
		-1 + a''(y) p & -\Delta + a'(y) \\
		-\Delta + a'(y) & \sign'(-p)
	\end{pmatrix}
	.
\end{equation*}
We need some assumptions.
\begin{assumption}
	\label{asm:assumption}
	Let $(\bar y, \bar u, \bar p)$ be a solution of the optimality system
	such that $\bar p \in C^1(\bar\Omega)$ satisfies
	$\nabla\bar p \ne 0$ on $\set{\bar p = 0}$
	and \eqref{eq:asm_bdry}.
	Further, we require that $\Omega \subset \R^d$ is regular enough
	such that $-\Delta + a'(\bar y)$
	is an isomorphism
	from $W^{2,p}_*(\Omega)$ to $L^p(\Omega)$
	and from
	$W^{1,p}(\Omega)$ to $W^{1,p'}(\Omega)\dualspace$,
	where the exponent $p \in (d,\infty)$ with $p \ge 2$
	is fixed.
	Finally, we require $W^{1,p}(\Omega) \embeds C(\bar\Omega)$
	and that $\Omega$ satisfies \cref{asm:standing} with $q = p'$.
\end{assumption}
\begin{lemma}
	\label{lem:lemma_on_something}
	Let \cref{asm:assumption} be satisfied.
	Then,
	there exists an open neighborhood $O \subset W^{1,p}(\Omega) \times W^{2,p}_*(\Omega)$
	of $(\bar y, \bar p)$
	such that
	\begin{enumerate}
		\item
			$F \colon O \to L^p(\Omega) \times W^{1,p'}(\Omega)\dualspace$
			is continuous,
		\item
			$F$ is Fréchet differentiable at $(\bar y, \bar p)$ with $F'(\bar y, \bar p) = G(\bar y, \bar p)$,
		\item
			$G \colon O \to \LL( W^{1,p}(\Omega) \times W^{2,p}_*(\Omega), L^p(\Omega) \times W^{1,p'}(\Omega)\dualspace)$
			is continuous at $(\bar y, \bar p)$.
	\end{enumerate}
\end{lemma}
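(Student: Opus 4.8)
The plan is to write $F = F_{\mathrm{sm}} + F_{\mathrm{sg}}$, where
$F_{\mathrm{sm}}(y,p) := \bigl(-\Delta p + a'(y)p - y + y_d,\ -\Delta y + a(y)\bigr)$ collects the elliptic and superposition terms and $F_{\mathrm{sg}}(y,p) := \bigl(0,\, -\sign(-p)\bigr)$ is the nonsmooth part, and to treat the two summands independently; the whole difficulty of $F_{\mathrm{sg}}$ is already contained in \cref{sec:cty_derivative_signum}. Throughout, I use the embeddings $W^{1,p}(\Omega) \embeds C(\bar\Omega)$ (assumed) and $W^{2,p}_*(\Omega) \embeds C^1(\bar\Omega)$ — both valid since $p > d$ and $\Omega$ is a bounded Lipschitz domain, hence a $W^{2,p}$-extension domain — as well as $L^p(\Omega) \embeds W^{1,p'}(\Omega)\dualspace$ (the dual of the trivial embedding $W^{1,p'}(\Omega) \embeds L^{p'}(\Omega)$) and the fact that $C(\bar\Omega)$ is a Banach algebra. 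First I fix $O$ to be a ball around $(\bar y, \bar p)$ small enough that, by \cref{lem:stability_of_reg_condition} applied to $-\bar p \in C^1(\bar\Omega)$ (note $\nabla(-\bar p) = -\nabla \bar p \ne 0$ on $\set{-\bar p = 0} = \set{\bar p = 0}$, and that $W^{2,p}_*(\Omega)$-balls map into $C^1(\bar\Omega)$-balls), every $(y,p) \in O$ satisfies $\nabla(-p) \ne 0$ on $\set{p = 0}$; by Stampacchia's lemma this implies $\lambda(\set{p = 0}) = 0$ for all $(y,p) \in O$.

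For $F_{\mathrm{sm}}$, standard superposition-operator calculus (e.g.\ \cite[Chapter~4]{Troltzsch2010}), using $a \in C^2$ so that $y \mapsto a(y)$ and $y \mapsto a'(y)$ are $C^1$ and $y \mapsto a''(y)$ is continuous as maps $C(\bar\Omega) \to C(\bar\Omega)$, together with the boundedness of $-\Delta$ as an operator $W^{2,p}_*(\Omega) \to L^p(\Omega)$ and $W^{1,p}(\Omega) \to W^{1,p'}(\Omega)\dualspace$ and the boundedness of pointwise multiplication $C(\bar\Omega) \times W^{2,p}_*(\Omega) \to L^p(\Omega)$, shows that $F_{\mathrm{sm}} \colon W^{1,p}(\Omega) \times W^{2,p}_*(\Omega) \to L^p(\Omega) \times W^{1,p'}(\Omega)\dualspace$ is a $C^1$ map with derivative
\[
	\begin{pmatrix}
		-1 + a''(y)p & -\Delta + a'(y) \\
		-\Delta + a'(y) & 0
	\end{pmatrix}.
\]
In particular, $F_{\mathrm{sm}}$ is continuous everywhere, it is Fréchet differentiable at $(\bar y, \bar p)$, and its derivative — which is exactly $G$ with the $(2,2)$-block $\sign'(-p)$ deleted — is continuous in $(y,p)$.

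It remains to handle $F_{\mathrm{sg}}$, for which I invoke the three properties of $\sign$ from \cref{sec:cty_derivative_signum} composed with the bounded linear map $\Lambda \colon p \mapsto -p$ and the embedding $W^{2,p}_*(\Omega) \embeds C^1(\bar\Omega)$. For continuity on $O$: if $(y,p) \in O$ and $p_k \to p$ in $W^{2,p}_*(\Omega)$, then $p_k \to p$ uniformly on $\bar\Omega$, hence $\sign(-p_k) \to \sign(-p)$ pointwise $\lambda$-a.e.\ (using $\lambda(\set{p = 0}) = 0$), and dominated convergence ($\abs{\sign} \le 1$, $\lambda(\Omega) < \infty$) yields $\sign(-p_k) \to \sign(-p)$ in $L^p(\Omega)$, so $F_{\mathrm{sg}}$ is continuous on $O$ as a map into $L^p(\Omega) \times W^{1,p'}(\Omega)\dualspace$. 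For Fréchet differentiability at $\bar p$: since $\Omega$ is bounded and Lipschitz we have $\HH^{d-1}(\partial\Omega) < \infty$, and $-\bar p$ inherits $\nabla(-\bar p) \ne 0$ on $\set{-\bar p = 0}$ and \eqref{eq:asm_bdry} from $\bar p$, so \cref{thm:differentiability_signum} with $q = p'$ gives Fréchet differentiability of $\sign \colon C^1(\bar\Omega) \to W^{1,p'}(\Omega)\dualspace$ at $-\bar p$; by the chain rule, $p \mapsto \sign(-p)$ is then Fréchet differentiable at $\bar p$ with derivative $\delta p \mapsto -\sign'(-\bar p)\delta p$, i.e.\ $F_{\mathrm{sg}}$ contributes precisely $\sign'(-\bar p)$ in the $(2,2)$-slot of $F'(\bar y, \bar p)$. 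Finally, continuity of $(y,p) \mapsto \sign'(-p)$ at $(\bar y, \bar p)$ follows from \cref{thm:cty_derivative2}: it gives continuity of $\sign' \colon C^1(\bar\Omega) \to \LL(C^1(\bar\Omega), W^{1,p'}(\Omega)\dualspace)$ at $-\bar p$, and precomposition with $\Lambda$ together with the embedding $W^{2,p}_*(\Omega) \embeds C^1(\bar\Omega)$ transfers this to continuity of $p \mapsto \sign'(-p)$ at $\bar p$ as a map into $\LL(W^{2,p}_*(\Omega), W^{1,p'}(\Omega)\dualspace)$.

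Adding $F_{\mathrm{sm}}$ and $F_{\mathrm{sg}}$ (and the corresponding derivatives) yields the three assertions with $F'(\bar y, \bar p) = G(\bar y, \bar p)$. The substantive input is entirely \cref{thm:differentiability_signum,thm:cty_derivative2}; the work here is the functional-analytic bookkeeping — making sure $W^{1,p}(\Omega)$ and $W^{2,p}_*(\Omega)$ embed into the spaces $C(\bar\Omega)$ and $C^1(\bar\Omega)$ on which the Nemytskii calculus and the signum results live — and, as the one point requiring genuine care, that continuity of the signum term holds on a whole neighborhood of $(\bar y, \bar p)$ rather than merely at the point, which is precisely what the stability of the nondegeneracy condition in \cref{lem:stability_of_reg_condition} provides.
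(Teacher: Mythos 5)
Your proof is correct and follows essentially the same route as the paper: the paper's own proof of \cref{lem:lemma_on_something} just says "the Nemytskii operator associated with $a$ is $C^2$ on $L^\infty(\Omega)$... argue as in the proof of \cref{lem:lemma}," and the proof of \cref{lem:lemma} cites \cref{lem:stability_of_reg_condition} for the neighborhood, \cref{lem:continuity_sign_in_L1} for continuity of $F$, \cref{thm:differentiability_signum} for Fréchet differentiability, and \cref{thm:cty_derivative2} for continuity of $G$ — exactly the four ingredients you use. Your explicit decomposition $F = F_{\mathrm{sm}} + F_{\mathrm{sg}}$ and the re-derivation of the dominated-convergence argument (rather than just citing \cref{lem:continuity_sign_in_L1}) make the argument more self-contained but not different in substance.
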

\begin{proof}
	From \cref{asm:assumption},
	we get the embeddings
	$W^{2,p}(\Omega) \hookrightarrow C^1(\bar\Omega)$
	and
	$W^{1,p}(\Omega) \hookrightarrow L^\infty(\Omega)$.
	The Nemytskii operator associated with $a$ is $C^2$ on $L^\infty(\Omega)$.
	Thus, we can argue as in the proof of \cref{lem:lemma}.
\end{proof}
Note that we still have to check that
$G(\bar y, \bar p)$
is continuously invertible.
It is clear that
we need a second-order condition,
since we are dealing with a nonconvex problem.
\begin{assumption}
	\label{asm:SSC}
	Let $(\bar y, \bar u, \bar p)$ be a solution of the optimality system.
	We assume that
	\begin{equation}
		\label{eq:SSC}
		\int_\Omega (1 - a''(\bar y) \bar p) \parens*{ (-\Delta + a'(\bar y))^{-1} h }^2 \d\lambda
		+
		\frac12 \int_{\set{\bar p = 0}} \abs{\nabla \bar p} h^2 \d\HH^{d-1}
		>
		0
	\end{equation}
	holds for all $h \in L^2(\HH^{d-1}|_{\set{\bar p = 0}}) \setminus \set{0}$.
\end{assumption}
For a similar control problem
(with Dirichlet boundary conditions instead of Neumann boundary conditions)
it was shown
in
\cite[Theorem~6.12]{ChristofWachsmuth2017:1}
that (under suitable assumptions)
\eqref{eq:SSC}
is a sufficient optimality condition
which is equivalent to a quadratic growth condition in $L^1(\Omega)$.
We expect that this result can be transferred to the optimal control problem at hand
using the results from \cref{sec:cty_derivative_signum}.

\begin{lemma}
	\label{lem:some_invertibility}
	Let \cref{asm:assumption,asm:SSC} be satisfied.
	Then,
	there exists an open neighborhood $O \subset W^{1,p}(\Omega) \times W^{2,p}_*(\Omega)$
	of $(\bar y, \bar p)$
	and $M > 0$
	such that
	for all $(y,p) \in O$,
	$G(y,p) \in \LL( W^{1,p}(\Omega) \times W^{2,p}_*(\Omega), L^p(\Omega) \times W^{1,p'}(\Omega)\dualspace)$
	is continuously invertible
	with inverse bounded by $M$.
\end{lemma}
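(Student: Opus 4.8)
The plan is to prove that $G(\bar y,\bar p)$ is continuously invertible (the uniform invertibility on a neighborhood $O$ then follows from the continuity of $G$), and to establish invertibility via the Fredholm alternative, with the second-order condition \cref{asm:SSC} used only to verify injectivity. Concretely, I would split $G(\bar y,\bar p) = A + K$ into its off-diagonal part
\[
	A := \begin{pmatrix} 0 & -\Delta + a'(\bar y) \\ -\Delta + a'(\bar y) & 0 \end{pmatrix}
\]
and its diagonal part $K := \operatorname{diag}\parens*{ a''(\bar y)\bar p - 1,\ \sign'(-\bar p) }$. By \cref{asm:assumption}, $-\Delta + a'(\bar y)$ is an isomorphism from $W^{2,p}_*(\Omega)$ onto $L^p(\Omega)$ and from $W^{1,p}(\Omega)$ onto $W^{1,p'}(\Omega)\dualspace$, so $A$ is an isomorphism between the two product spaces. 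The operator $K$ is compact: its $(1,1)$-entry is multiplication by the bounded continuous function $a''(\bar y)\bar p - 1$ composed with the compact (Rellich) embedding $W^{1,p}(\Omega)\hookrightarrow L^p(\Omega)$, and its $(2,2)$-entry is the bounded operator $\sign'(-\bar p) \in \LL(C^1(\bar\Omega),W^{1,p'}(\Omega)\dualspace)$ from \cref{thm:differentiability_signum} composed with the compact embedding $W^{2,p}_*(\Omega)\hookrightarrow C^1(\bar\Omega)$. Hence $G(\bar y,\bar p)$ is Fredholm of index $0$, and it remains to show it is injective.

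For injectivity, let $(\delta y,\delta p)$ lie in the kernel. The second equation reads $(-\Delta + a'(\bar y))\delta y = -\sign'(-\bar p)\delta p$, and by \cref{thm:differentiability_signum} together with \cref{lem:trace} its right-hand side is the functional $\psi \mapsto -2\int_{\set{\bar p=0}} \frac{\psi\,\delta p}{\abs{\nabla\bar p}}\d\HH^{d-1}$. Writing $\Gamma := \set{\bar p=0}$ and $h := -2\,(\delta p|_\Gamma)/\abs{\nabla\bar p}$ — which lies in $C(\Gamma) \subset L^2(\HH^{d-1}|_\Gamma)$ because $\delta p \in W^{2,p}_*(\Omega) \embeds C^1(\bar\Omega)$ and $\abs{\nabla\bar p}$ is bounded below on the compact set $\Gamma$ — this gives $\delta y = (-\Delta + a'(\bar y))^{-1} h$ in the sense of \cref{asm:SSC}. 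The first equation is $(-\Delta + a'(\bar y))\delta p = (1 - a''(\bar y)\bar p)\delta y$; I would rewrite it as $\delta p = (-\Delta + a'(\bar y))^{-1}\bracks{(1-a''(\bar y)\bar p)\delta y}$, restrict it to $\Gamma$, pair it with $h$ in $L^2(\HH^{d-1}|_\Gamma)$, and then use the self-adjointness of $-\Delta + a'(\bar y)$ together with $\delta p|_\Gamma = -\tfrac12 \abs{\nabla\bar p}\,h$ and $\delta y = (-\Delta+a'(\bar y))^{-1}h$ to arrive at
\[
	\int_\Omega (1 - a''(\bar y)\bar p)\parens*{(-\Delta + a'(\bar y))^{-1}h}^2 \d\lambda + \frac12\int_\Gamma \abs{\nabla\bar p}\,h^2 \d\HH^{d-1} = 0.
\]
By \cref{asm:SSC} this forces $h = 0$, hence $\delta p|_\Gamma = 0$ and $\delta y = (-\Delta + a'(\bar y))^{-1}h = 0$; the first equation then yields $(-\Delta + a'(\bar y))\delta p = 0$, so $\delta p = 0$. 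Thus $G(\bar y,\bar p)$ is injective, hence boundedly invertible. Finally, since $G$ is continuous at $(\bar y,\bar p)$ into $\LL(W^{1,p}(\Omega)\times W^{2,p}_*(\Omega),\, L^p(\Omega)\times W^{1,p'}(\Omega)\dualspace)$ by \cref{lem:lemma_on_something}, and the set of boundedly invertible operators is open with locally bounded inverse, I would shrink $O$ so that $G(y,p)$ is invertible with $\norm{G(y,p)^{-1}} \le M := 2\norm{G(\bar y,\bar p)^{-1}}$ for all $(y,p) \in O$.

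I expect the injectivity step to be the main obstacle, for bookkeeping reasons rather than conceptual ones: one must carefully interpret $\sign'(-\bar p)\delta p$ as the surface measure $\tfrac{2\,\delta p|_\Gamma}{\abs{\nabla\bar p}}\HH^{d-1}|_\Gamma$, reconcile the $W^{1,p}$/$W^{1,p'}\dualspace$ duality of \cref{asm:assumption} with the $H^1$/$H^{-1}$ setting needed to make sense of $(-\Delta+a'(\bar y))^{-1}$ applied to such a measure and of the quadratic form in \cref{asm:SSC} (the two interpretations agree here because $h$ is continuous and $\Gamma$ has finite $\HH^{d-1}$-measure, so the measure lies in $W^{1,p'}(\Omega)\dualspace$ by \cref{lem:trace}), and justify the self-adjointness manipulation that converts the kernel equations into that quadratic form. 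The remaining steps — compactness of the Rellich and Morrey embeddings, the Fredholm alternative, and openness of the set of invertible operators — are routine.
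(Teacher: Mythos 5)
Your proposal is correct and follows essentially the same route as the paper: a compact-perturbation-of-isomorphism decomposition to obtain Fredholm index zero, followed by injectivity deduced from \cref{asm:SSC} by testing the kernel equations against the pair $(-z,q)$ (equivalently, by your solve-and-pair manipulation, which yields the same quadratic form). The only cosmetic difference is that you place $\sign'(-\bar p)$ into the compact part $K$, whereas the paper keeps it in the base isomorphism and takes only the $(1,1)$-multiplication as the compact perturbation; both choices work for the reasons you give.
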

\begin{proof}
	In view of the continuity of $G$ from \cref{lem:lemma_on_something},
	it is sufficient to check that
	$G(\bar y, \bar p)$ is continuously invertible.
	It is clear that the operator
	\begin{equation*}
		\begin{pmatrix}
			0 & -\Delta + a'(\bar y) \\
			-\Delta + a'(\bar y) & \sign'(-\bar p)
		\end{pmatrix}
		\in
		\LL( W^{1,p}(\Omega) \times W^{2,p}_*(\Omega), L^p(\Omega) \times W^{1,p'}(\Omega)\dualspace)
	\end{equation*}
	is continuously invertible
	and $G(\bar y, \bar p)$ is a compact perturbation of this operator.
	Consequently, $G(\bar y, \bar p)$ is a Fredholm operator of index $0$.
	By the Fredholm alternative, it is sufficient to check that $G(\bar y, \bar p)$ is injective.
	Let $z \in W^{1,p}(\Omega)$, $q \in W^{2,p}_*(\Omega)$ be given
	such that
	$G(\bar y, \bar p)(z,q) = 0 \in L^p(\Omega) \times W^{1,p'}(\Omega)\dualspace$.
	We set
	\begin{equation*}
		v
		:=
		-2 \frac{q}{\abs{\nabla \bar p}} \bigg|_{\set{\bar p = 0}}
		\in C(\set{\bar p = 0})
		.
	\end{equation*}
	We identify $v$ with the functional $\varphi \mapsto \int_{\set{\bar p = 0}} \varphi v \d\HH^{d-1}$ from $W^{1,p'}(\Omega)\dualspace$,
	which is well defined due to \cref{lem:trace}.
	From the second component of
	$G(\bar y, \bar p)(z,q) = 0$,
	we get
	$z = (-\Delta + a'(\bar y))^{-1} v$.
	Due to $p \ge 2$,
	it follows $W^{1,p}(\Omega) \hookrightarrow L^{p'}(\Omega)$
	and we can test
	the equation $G(\bar y, \bar p)(z,q) = 0$
	with $(-z, q)$.
	This gives
	\begin{align*}
		0
		&=
		\dual{(-z,q)}{G(\bar y, \bar p)(z,q)}
		=
		\int_\Omega (1 - a''(\bar y)\bar p) z^2 \d\lambda
		+
		2 \int_{\set{\bar p = 0}} \frac{q^2}{\abs{\nabla\bar p}} \d\HH^{d-1}
		\\
		&=
		\int_\Omega (1 - a''(\bar y)\bar p) \parens*{(-\Delta + a'(\bar y))^{-1} v}^2 \d\lambda
		+
		\frac12 \int_{\set{\bar p = 0}} \abs{\nabla\bar p} v^2 \d\HH^{d-1}
		.
	\end{align*}
	Due to \cref{asm:SSC}, this implies
	$v = 0$
	and, consequently, $z = 0$ and $q = 0$.
	This yields injectivity of $G(\bar y, \bar p)$ and shows the claim.
\end{proof}
As a consequence, we obtain local superlinear convergence of Newton's method.
\begin{theorem}
	\label{thm:semilinear_newton}
	Let \cref{asm:assumption,asm:SSC} be satisfied.
	Then there is an open neighborhood $O\subset W^{1,p}(\Omega) \times W^{2,p}_*(\Omega)$ of $(\bar y, \bar p)$ such that
	for all $(y_0, p_0) \in O$, the iterates
	\begin{equation*}
		(y_{k+1}, p_{k+1}) := (y_k, p_k) - G(y_k, p_k)^{-1} F(y_k, p_k)
	\end{equation*}
	stay in $O$ and converge superlinearly towards $(\bar y, \bar p)$.
\end{theorem}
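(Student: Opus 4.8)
The plan is to combine the three analytic facts already established in \cref{lem:lemma_on_something,lem:some_invertibility} with the classical local convergence argument for Newton's method, adapted to the circumstance that $F$ is only known to be Fréchet differentiable at the single point $(\bar y, \bar p)$ and $G$ only continuous there, so that no mean-value/Lipschitz estimate for $F$ on a neighborhood is available. To keep notation light, abbreviate $X := W^{1,p}(\Omega) \times W^{2,p}_*(\Omega)$, write $x_k := (y_k, p_k)$ and $\bar x := (\bar y, \bar p)$, and equip $X$ with the product norm.

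First I would fix, using \cref{lem:lemma_on_something,lem:some_invertibility}, a ball $O = U_\rho(\bar x) \subset X$ on which $F$ is continuous, on which $G(x)$ is continuously invertible with $\norm{G(x)^{-1}} \le M$, and such that $F$ is Fréchet differentiable at $\bar x$ with $F'(\bar x) = G(\bar x)$ and $F(\bar x) = 0$, while $G$ is continuous at $\bar x$. After shrinking $\rho$ I may additionally assume
\begin{equation*}
	\norm{G(x) - G(\bar x)} + \frac{\norm{F(x) - G(\bar x)(x - \bar x)}}{\norm{x - \bar x}} \le \frac{1}{2M}
	\qquad \forall x \in O \setminus \set{\bar x},
\end{equation*}
the first summand being small by continuity of $G$ at $\bar x$ and the second by Fréchet differentiability of $F$ at $\bar x$ together with $F(\bar x) = 0$.

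Next I would derive the one-step estimate: for $x_k \in O$, the Newton update together with $F(\bar x) = 0$ gives
\begin{align*}
	x_{k+1} - \bar x
	&=
	x_k - \bar x - G(x_k)^{-1} F(x_k)
	\\
	&=
	G(x_k)^{-1}\parens*{
		\parens{G(x_k) - G(\bar x)}(x_k - \bar x)
		- \parens{F(x_k) - G(\bar x)(x_k - \bar x)}
	},
\end{align*}
and hence, using $\norm{G(x_k)^{-1}} \le M$,
\begin{equation*}
	\norm{x_{k+1} - \bar x}
	\le
	M \parens*{
		\norm{G(x_k) - G(\bar x)}
		+
		\frac{\norm{F(x_k) - G(\bar x)(x_k - \bar x)}}{\norm{x_k - \bar x}}
	}
	\norm{x_k - \bar x}
	\le
	\tfrac12 \norm{x_k - \bar x}.
\end{equation*}
By induction this shows that every sequence started in $O$ remains in $O$ and converges to $\bar x$. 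Feeding this convergence back into the displayed inequality, the bracketed factor tends to $0$ as $k \to \infty$ (again by continuity of $G$ at $\bar x$ and by Fréchet differentiability of $F$ at $\bar x$), which is exactly Q-superlinear convergence.

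I do not expect a genuine obstacle here: all the substantial work — the differentiability of $\sign$ and continuity of $\sign'$, hence continuity of $F$, Fréchet differentiability of $F$ at $\bar x$, and invertibility of $G(\bar x)$ via the second-order condition \cref{asm:SSC} — is already packaged in \cref{lem:lemma_on_something,lem:some_invertibility}. The only point requiring (mild) care is that, since $F$ is differentiable only at $\bar x$, one must compare $F(x_k)$ directly with its linearization at $\bar x$ rather than invoke a Lipschitz bound, which is precisely what the estimate above does.
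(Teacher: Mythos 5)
Your argument is correct and matches the paper's own (omitted) proof almost verbatim: both proofs pick the neighborhood using \cref{lem:lemma_on_something,lem:some_invertibility}, shrink it so that the sum of the $G$-continuity error and the first-order Taylor remainder is at most $1/(2M)$, and then read off contraction and superlinear convergence from the decomposition $x_{k+1}-\bar x = G(x_k)^{-1}\bigl[(G(x_k)-G(\bar x))(x_k-\bar x)-(F(x_k)-F(\bar x)-G(\bar x)(x_k-\bar x))\bigr]$. In particular, you correctly identified that since $F$ is only differentiable at $\bar x$, one must compare $F(x_k)$ against the linearization at $\bar x$ rather than invoke a mean-value bound, which is exactly the point the authors rely on.
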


In what follows,
we sketch a heuristic for a possible globalization.
We denote by $S$ the solution operator of the semilinear state equation.
We define the reduced objective $f$ via
\begin{equation*}
	f(u) := \frac12 \norm{ S(u) - y_d}_{L^2(\Omega)}^2.
\end{equation*}
From the optimality system,
we know that
the optimal control $\bar u$
can be written as $\bar u = \sign(-\bar p)$,
where $\bar p = f'(\bar u)$,
under the assumption that $\set{\bar p = 0}$ has measure zero.
This motivates the definition of
\begin{equation*}
	J(w) := f(\sign(w)) = \frac12 \norm{ S(\sign(w)) - y_d}_{L^2(\Omega)}^2.
\end{equation*}
Under the assumption that $w \in C^1(\bar\Omega)$
satisfies $\nabla w \ne 0$ on $\set{w = 0}$ and \eqref{eq:asm_bdry},
we get
\begin{equation}\label{eq:Jprime}
	J'(w)
	=
	\sign'(w) f'(\sign(w))
	=
	\sign'(w) ( w + f'(\sign(w)) )
	.
\end{equation}
Here, we used that $\sign'(w) w = 0$
in $W^{1,p'}(\Omega)\dualspace$
by the definition of $\sign'$, see \eqref{eq:derivative_signum}.
If $(\bar y, \bar u)$ is a locally optimal solution of our optimization problem,
then there are, of course, many $w$ with $\bar u = \sign(w)$
and all of these satisfy $J'(w) = 0$.
Consequently, it is not sensible to solve $J'(w) = 0$,
but, instead, we aim to solve the stronger
$0 = \bar w + f'(\sign(\bar w))$,
i.e., $\bar w = -\bar p$,
where $\bar p := f'(\bar u)$ is the optimal adjoint state.
The corresponding Newton equation
at an iterate $w_k$ (at which $\sign$ is assumed to be differentiable)
reads
\begin{equation}
	\label{eq:unsymmetrisch}
	[I + f''(\sign(w_k)) \sign'(w_k)] \delta w_k
	=
	-(w_k + f'(\sign(w_k)))
	.
\end{equation}
Note that the system matrix is not symmetric.
Similarly to the arguments leading to \cref{thm:semilinear_newton},
one can show that (under \cref{asm:assumption,asm:SSC})
this method converges superlinearly in a neighborhood of $\bar w = -\bar p$.

In order to symmetrize \eqref{eq:unsymmetrisch},
we multiply it by $\sign'(w_k)$ from the left and arrive at
\begin{equation}
	\label{eq:symmetrisch}
	[\sign'(w_k) + \sign'(w_k) f''(\sign(w_k)) \sign'(w_k)] \delta w_k
	=
	-\sign'(w_k) (w_k + f'(\sign(w_k)))
	.
\end{equation}
It is clear that \eqref{eq:symmetrisch} does not imply \eqref{eq:unsymmetrisch}.
Note that \eqref{eq:symmetrisch} can be written as
\begin{equation*}
	H(w_k) \delta w_k = -J'(w_k)
\end{equation*}
with
\begin{equation*}
	H(w)
	:=
	\sign'(w) [I + f''(\sign(w)) \sign'(w)]
\end{equation*}
and this operator can be seen as a Gauß--Newton approximation
of the
first derivative of the
expression of $J'$ derived in \eqref{eq:Jprime}
(which might, in general, fail to exist).
Indeed, we only differentiate the expression $w + f'(\sign(w))$
in $J'(w)$
and ignore the (non-existing) derivative of $\sign'(w)$,
which, at the solution $\bar w$, would be multiplied by $\bar w + f'(\sign(\bar w)) = 0$ anyway.
For globalization, we use a trust-region approach,
i.e., we (approximately) solve
the trust-region subproblem
\begin{equation*}
	\text{Minimize}
	\quad
	\frac12 \dual{\delta w_k}{H(w_k) \delta w_k} + J'(w_k) \delta w_k
	\quad\text{s.t.}\quad
	\dual{\delta w_k}{\sign'(w_k) \delta w_k} \le \Delta^2
	.
\end{equation*}
This will be done by applying the Steihaug CG method
to solve \eqref{eq:unsymmetrisch}
in the semi-definite bilinear form
\begin{equation}
	\label{eq:semi_inner_product}
	\innerprod{v_1}{v_2}_{w_k}
	:=
	\dual{v_1}{\sign'(w_k) v_2}
	.
\end{equation}
Note that the operator appearing in \eqref{eq:unsymmetrisch}
is self-adjoint w.r.t.\ this bilinear form.
By
\begin{equation*}
	\norm{v}_{w_k}
	:=
	\sqrt{\dual{v}{\sign'(w_k) v}}
\end{equation*}
we denote the associated seminorm.
The Steihaug CG is stopped when the iterates leave the trust-region
$\set{v \in C^1(\bar\Omega) \given \norm{v}_{w_k} \le \Delta_k}$
or if the residual in \eqref{eq:unsymmetrisch} is small,
i.e.,
\begin{equation*}
	\norm*{
		[I + f''(\sign(w_k)) \sign'(w_k)] \delta w_k
		+
		(w_k + f'(\sign(w_k)))
	}_{w_k}
	\le \tau_k
\end{equation*}
for given $\tau_k > 0$.
One can check that the resulting Steihaug CG is well-defined.
These ideas give rise to \cref{alg:ATRM}.
\begin{algorithm2e}
	\KwData{%
		$0 < \eta_2 \le \eta_1 < 1$,
		$0 < \gamma_1 < 1 < \gamma_2$,
		$\Delta_0 > 0$,
		$\theta_1 > 1$,
		$\theta_2 > 0$,
		$w_0 \in C^1(\bar\Omega)$
	}
	\For{$k \leftarrow 0$ \KwTo $\infty$}
	{
		Solve \eqref{eq:unsymmetrisch} for $\delta w_k$ using the Steihaug CG method
		in the bilinear form given by \eqref{eq:semi_inner_product}
		using the trust-region radius $\Delta_k$
		and absolute tolerance $\tau_k := \min( \norm{w_k - f'(\sign(w_k))}_{w_k}^{\theta_1}, \theta_2 \norm{w_k - f'(\sign(w_k))}_{w_k} )$\;
		Set
		$\operatorname{pred}_k := -\bracks*{ \frac12 \dual{\delta w_k}{H(w_k) w_k} + J'(w_k) \delta w_k }$,
		$\operatorname{ared}_k := J(w_k) - J(w_k + \delta w_k)$,
		and
		$\rho_k := \frac{\operatorname{ared}_k}{\operatorname{pred}_k}$\;
		\eIf{$\rho \ge \eta_2$}
		{
			$w_{k+1} = w_k + \delta w_k$\;
			\eIf{$\rho \ge \eta_1$}
			{
				$\Delta_{k+1} := \max(\Delta_k, \gamma_2 \norm{\delta w_k}_{w_k} )$\;
			}{
				$\Delta_{k+1} := \Delta_k$\;
			}
		}
		{
			$w_{k+1} = w_k$\;
			$\Delta_{k+1} = \gamma_1 \norm{\delta w_k}_{w_k}$\;
		}
	}
	\caption{Trust-region globalization}
	\label{alg:ATRM}
\end{algorithm2e}
In practise,
this algorithm works quite nicely, see \cref{sec:numerics}.
This is a little bit surprising, since
the subproblem (with trust-region radius $\Delta := \infty$)
only yields a solution to \eqref{eq:symmetrisch}
and we would not expect convergence to a solution of
$0 = \bar w + f'(\sign(\bar w))$,
which would require the solution of \eqref{eq:unsymmetrisch}.
Analyzing this globalization is subject to future research.

In our numerical experiments,
we used the parameters
\begin{equation*}
	\eta_1 = \frac34,
	\quad
	\eta_2 = \frac14,
	\quad
	\gamma_1 = \frac12,
	\quad
	\gamma_2 = 2,
	\quad
	\theta_1 = \frac32,
	\quad
	\theta_2 = \frac{1}{20}
	.
\end{equation*}

\section{Numerical examples}
\label{sec:numerics}
In this section,
we give some numerical results for the solution of the problem
\begin{align*}
	\text{Minimize}& \quad  \frac12\norm{y - y_d}_{L^2(\Omega)}^2 \\
	\text{s.t.}& \quad\mathopen{} -\Delta y + 10 y + \alpha y^3 = u \text{ in } \Omega, \quad \frac{\partial y}{\partial\nu} = 0 \text{ on }\partial\Omega\\
	\text{and}& \quad\mathopen{} \abs{u} \le u_b
\end{align*}
on the square $\Omega = (-1,1)^2$.
The desired state is given by
\begin{equation*}
	y_d(x_1, x_2) = 2\sin(2\pi(x_1+1)/2)\cos(2\pi x_2) + 0.8 (x_2+x_1^2) - 0.5
	.
\end{equation*}
In the linear case $\alpha = 0$,
we can apply both
globalized semismooth Newton
methods from \cref{sec:optimal_control}.
For the nonlinear case, we use $\alpha = 3$
and employ \cref{alg:ATRM} for the numerical solution.
For the discretization, we use the finite element method.
The state $y$ and the adjoint $p$ are discretized by piecewise linear functions
and we use the variational discretization for the control variable.
The algorithms are implemented in Julia 1.11.4.

The above problem becomes harder for larger values of $u_b$.
If $u_b$ is small (below approximately $57$ in the linear case and below approximately $60$ in the nonlinear case), the optimal solution seems to be of bang-bang type,
see \cref{fig:solutions}.
\begin{figure}[ht]
	\centering
	\includegraphics[width=.30\textwidth]{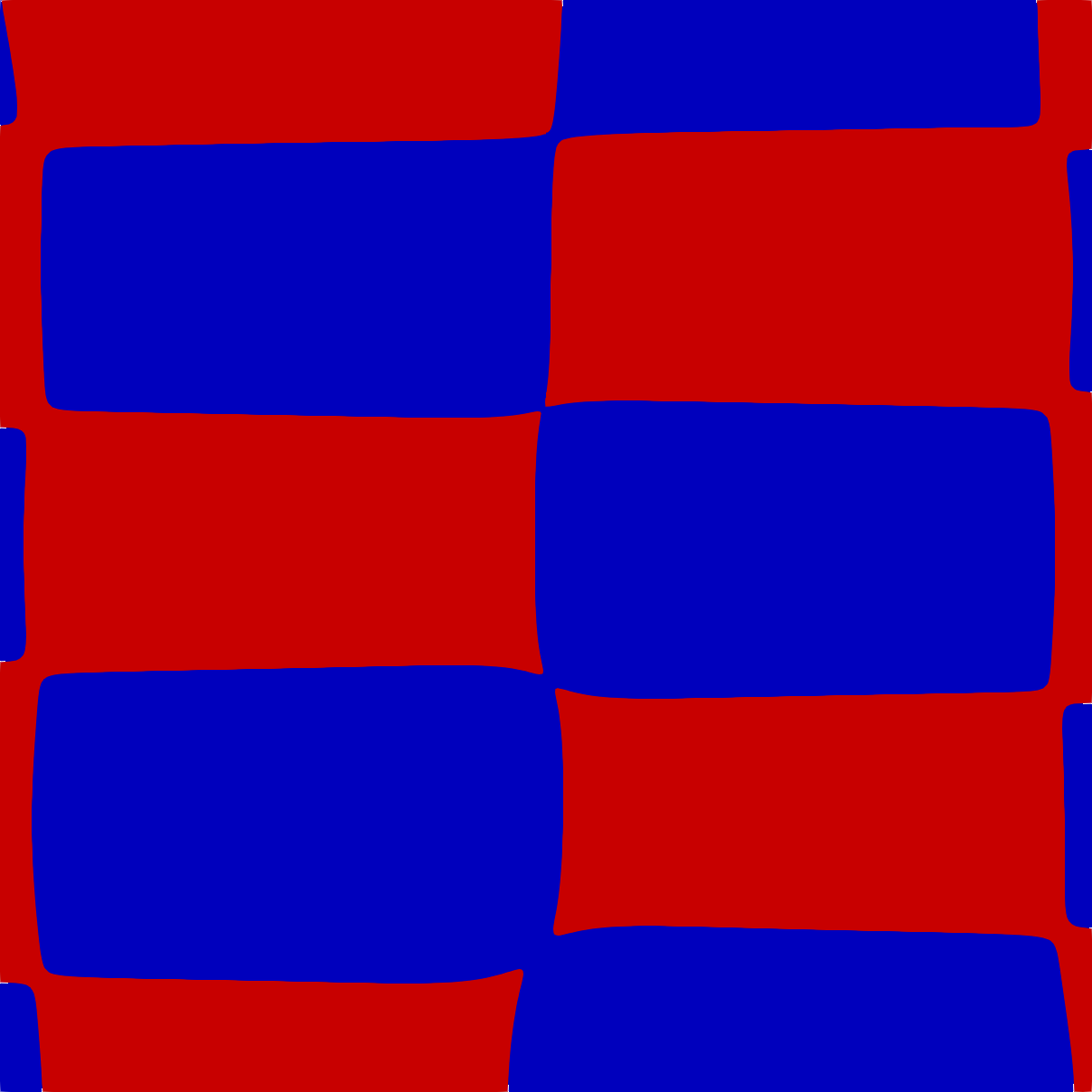}\hspace{.5cm}%
	\includegraphics[width=.30\textwidth]{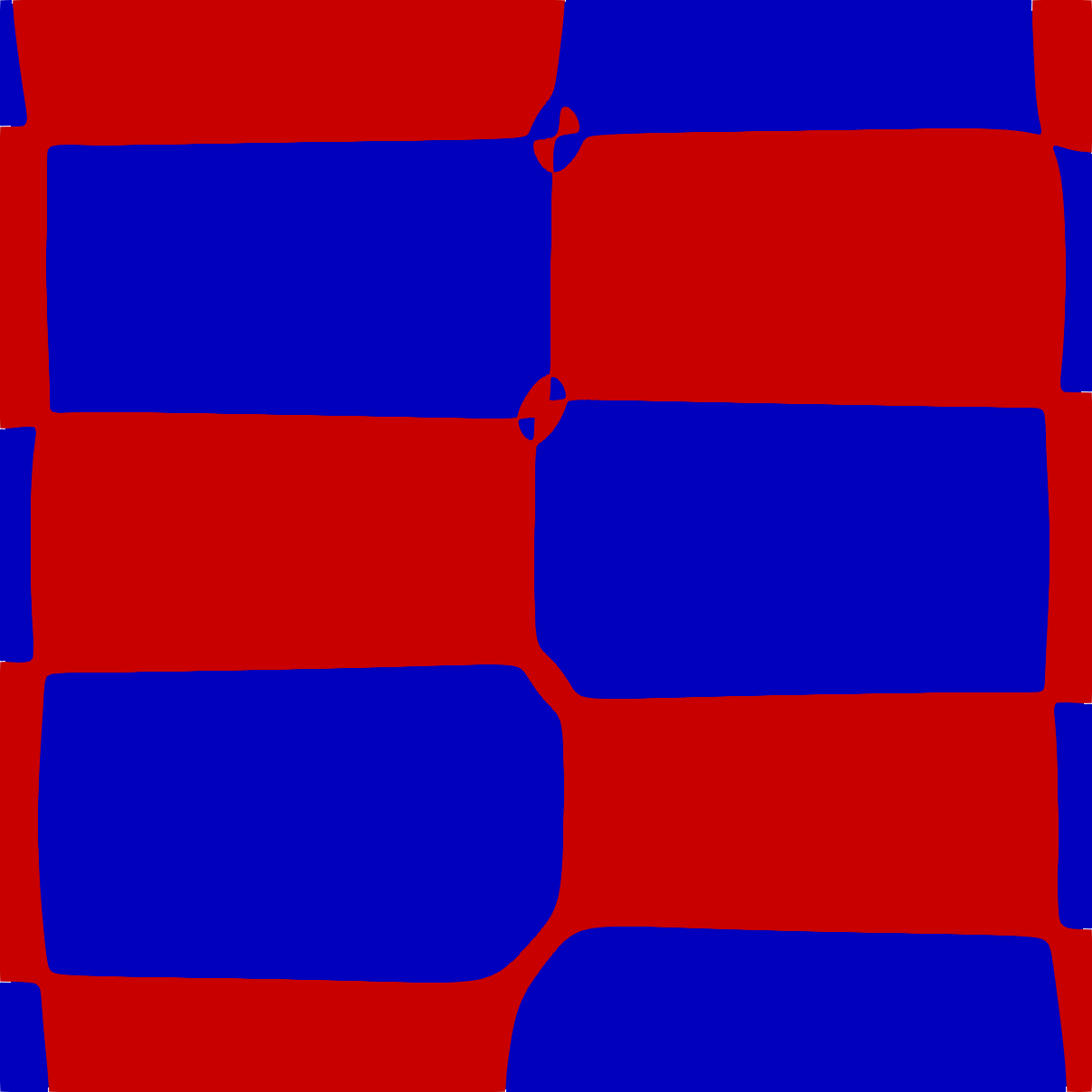}\\[.5cm]
	\includegraphics[width=.30\textwidth]{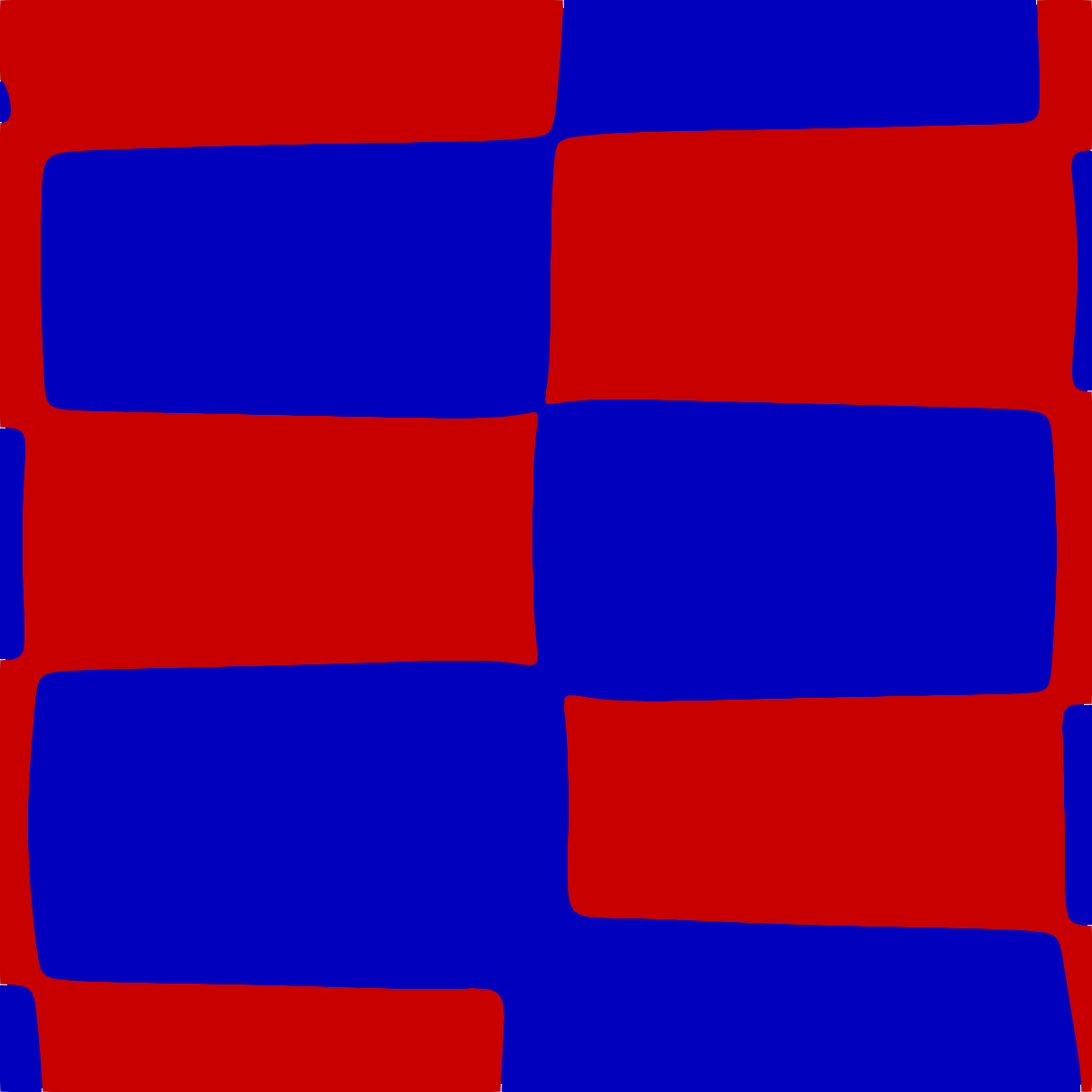}\hspace{.5cm}%
	\includegraphics[width=.30\textwidth]{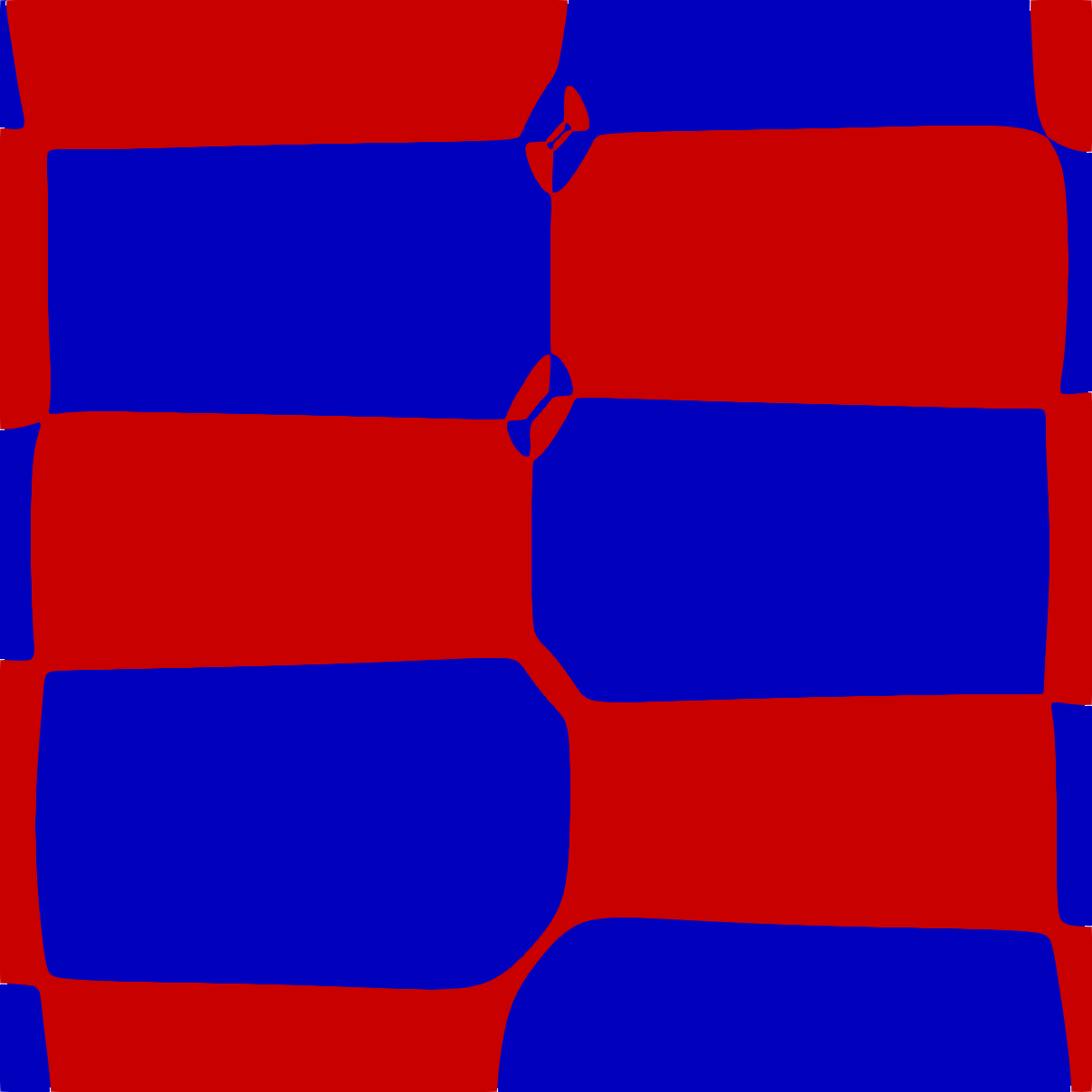}
	\caption{%
		Numerically computed optimal controls in the linear case (top row)
		with $u_b = 50$ (left) and $u_b = 57$ (right),
		and in the nonlinear case (bottom row)
		with $u_b = 50$ (left) and $u_b = 60$ (right).
		Blue and red colors indicate that the control is at the lower bound $-u_b$ and at the upper bound $u_b$, respectively.
	}
	\label{fig:solutions}
\end{figure}
In the actual computations for the linear case,
the main work is the application of the (discretized) control-to-state map $S$,
which is sped up by storing the Cholesky factorization of the stiffness matrix.
The results for $u_b = 50$ and $u_b = 57$
are given in \cref{tab:numerical_linear_50,tab:numerical_linear_57}, respectively.
Therein, ``iters'' is the number of (outer) iterations and ``solves''
is the number of applications of $S$.
The run time is given in seconds.
\begin{table}[ht]
	\centering
	\begin{tabular}{rrrrrrr}
		\toprule
		& \multicolumn{3}{c}{Alg.\ from \cref{sec:linear_quadratic}} & \multicolumn{3}{c}{Alg.\ from \cref{subsec:semilinear}}\\
		\cmidrule(r){2-4} \cmidrule(r){5-7}
		\# nodes & iters & solves & time  & iters & solves & time \\
		    81 &  15 & 494 &  0.0024 & 48 &  724 &  0.0053 \\
		   289 &  15 & 486 &  0.0068 & 57 &  825 &  0.0190 \\
		  1089 &  15 & 534 &  0.0311 & 71 &  921 &  0.0694 \\
		  4225 &  16 & 530 &  0.1876 & 66 &  967 &  0.3440 \\
		 16641 &  15 & 514 &  0.7408 & 81 & 1088 &  1.6900 \\
		 66049 &  16 & 592 &  5.3174 & 68 &  954 &  8.2823 \\
		263169 &  16 & 592 & 25.3966 & 72 &  981 & 36.8140 \\
		\bottomrule
	\end{tabular}
	\caption{Numerical results in the linear case with $u_b = 50$.}
	\label{tab:numerical_linear_50}
\end{table}
\begin{table}[ht]
	\centering
	\begin{tabular}{rrrrrrr}
		\toprule
		& \multicolumn{3}{c}{Alg.\ from \cref{sec:linear_quadratic}} & \multicolumn{3}{c}{Alg.\ from \cref{subsec:semilinear}}\\
		\cmidrule(r){2-4} \cmidrule(r){5-7}
		\# nodes & iters & solves & time  & iters & solves & time \\
		    81 &  17 &  616 &  0.0029 &  48 &  788 &  0.0049 \\
		   289 &  16 &  572 &  0.0078 &  43 &  679 &  0.0104 \\
		  1089 &  18 &  666 &  0.0370 &  55 &  795 &  0.0410 \\
		  4225 &  19 &  766 &  0.2500 &  87 & 1514 &  0.4574 \\
		 16641 &  21 &  986 &  1.4308 & 144 & 2381 &  3.5608 \\
		 66049 &  28 & 1124 & 10.0760 & 144 & 2454 & 22.6534 \\
		263169 &  28 & 1224 & 49.0543 & 159 & 2609 & 96.7201 \\
		\bottomrule
	\end{tabular}
	\caption{Numerical results in the linear case with $u_b = 57$.}
	\label{tab:numerical_linear_57}
\end{table}
It can be seen that the number of iterations and the number of applications of $S$
is almost mesh independent for $u_b = 50$.
In the case of $u_b = 57$,
the very fine structure of the solution is not resolved on coarse meshes
and therefore the iteration numbers grow slightly.

If we further increase $u_b$, our method breaks down.
Let us report about the outcome of the computations for $u_b=100$.
Although the iterates $w_k = S^* \xi_k$ still satisfy the assumptions from \cref{sec:cty_derivative_signum},
the operator norm of $\sign'(w_k)$
seems not to be bounded
uniformly w.r.t.\ the iteration number $k$.
In particular, we observe that the number of CG iterations needed to solve the Newton system
does not stay bounded in $k$.
We conjecture
that this is due to the fact that optimal controls are no longer bang-bang for large values of $u_b$.
Consequently, \cref{ass_nice_solution} is not satisfied and our convergence result \cref{thm:quadratic_newton}
does not apply.
In \cref{fig:solutions2}, we show the iterate $u_k = \sign(-S^*\xi_k)$ after $20$ iterations.
\begin{figure}[ht]
	\centering
	\includegraphics[width=.30\textwidth]{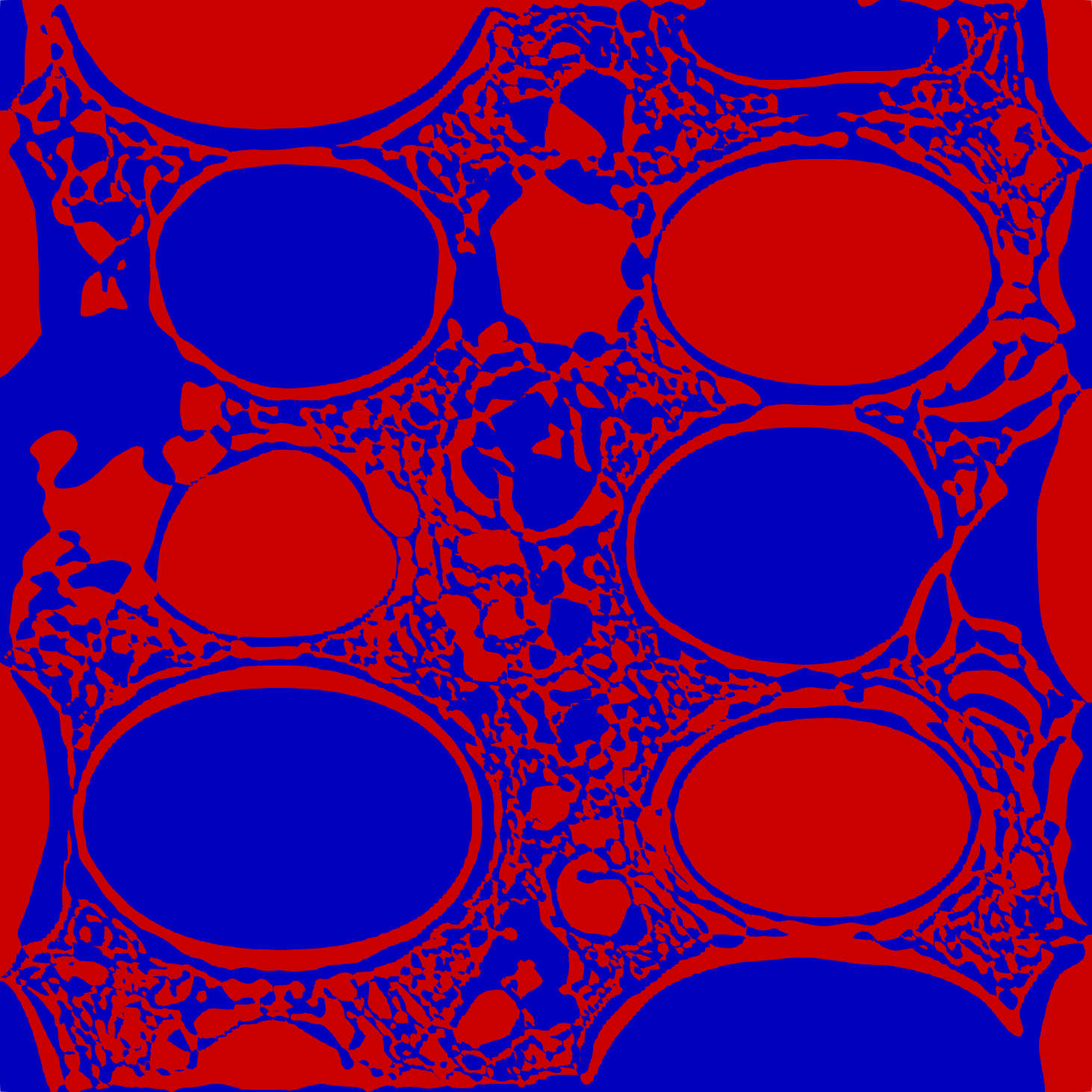}
	\caption{%
		The control $u_k$ after $20$ iterations
		in the linear case with $u_b = 100$.
		Blue and red colors indicate that the control is at the lower bound $-u_b$ and at the upper bound $u_b$, respectively.
	}
	\label{fig:solutions2}
\end{figure}

We would also like to point out that the fixed-point method from \cref{rem:fixed_point}
seems to converge for $u_b \le 2.7$ and seems to diverge for $u_b \ge 2.75$.

The results for the nonlinear case are given in \cref{tab:numerical_nonlinear}.
\begin{table}[ht]
	\centering
	\begin{tabular}{rrrrrrrrr}
		\toprule
		& \multicolumn{4}{c}{$u_b = 50$} & \multicolumn{4}{c}{$u_b = 60$}\\
		\cmidrule(r){2-5} \cmidrule(r){6-9}
		\# nodes & iters & fact. & sol. & time  & iters & fact. & sol. & time \\
		    81 &  37 &  209 &  743 &   0.0066 &  44 &  253 &  907 &   0.0096 \\
		   289 &  36 &  194 &  731 &   0.0277 &  49 &  276 &  972 &   0.0424 \\
		  1089 &  49 &  242 &  932 &   0.1696 &  49 &  264 &  989 &   0.1796 \\
		  4225 &  49 &  256 &  904 &   1.0579 & 110 &  465 & 2248 &   1.8969 \\
		 16641 &  42 &  221 &  776 &   3.7670 & 115 &  481 & 2135 &   8.8254 \\
		 66049 &  45 &  235 &  871 &  20.8764 & 154 &  597 & 3275 &  64.0454 \\
		263169 &  51 &  261 &  994 & 110.0614 & 250 &  916 & 4586 & 419.9417 \\
		\bottomrule
	\end{tabular}
	\caption{Numerical results in the nonlinear case.}
	\label{tab:numerical_nonlinear}
\end{table}
Now, we have to solve the linear systems corresponding to the operator
$(-\Delta + 10 + 3 \alpha w^2)$ for various values of $w$.
Whenever a new value of $w$ is used, the associated matrix has to be factorized
(``fact.'')
and for subsequent solves with the same matrix (``sol.''),
this factorization can be reused.
Similar to the linear problem,
we observe almost mesh-independence for $u_b = 50$
and a slight mesh dependence for $u_b = 60$,
which, again, is caused by the fine structures of the solution.

\printbibliography

\end{document}